\newtheorem{theorem}{Theorem}[section]
\newtheorem{lemma}[theorem]{Lemma}
\newtheorem{propos}[theorem]{Proposition}
\newtheorem{corol}[theorem]{Corollary}
\newtheorem{definition}[theorem]{Definition}
\newtheorem{remark}[theorem]{Remark}
\def\RR{{\mathbb{R}}}
\def\TT{{\mathbb  T}}
\def\NN{{\mathbb{N}}}
\newcommand{\eps}{{\varepsilon}}
\newcommand{\Leb}[1]{{\mathscr L}^{#1}} 
\newcommand{\fR}{{\mathcal R}}
\newcommand{\supp}{{\rm supp}}
\newcommand{\no}{10}
\newcommand{\nop}{12}
\renewcommand{\div}{{\text {div}}\,}
\begin{document}

\title[]
{Positive solutions of transport equations and classical nonuniqueness of characteristic curves}
\author{Elia Bru\'e}
\address[Elia Bru\'e]{
\newline \indent Scuola Normale Superiore
\newline \indent Piazza dei Cavalieri 7, 56126 Pisa, Italy}
\email{elia.brue@sns.it}
\author{Maria Colombo}
\address[Maria Colombo]{
\newline \indent EPFL B, Station 8
\newline \indent CH-1015 Lausanne, CH}
\email{maria.colombo@epfl.ch}
\author{Camillo De Lellis}
\address[Camillo De Lellis]{
\newline \indent School of Mathematics, Institute for Advanced Study and Universit\"at Z\"urich 
\newline \indent 1 Einstein Dr., Princeton NJ 05840, USA}
\email{camillo.delellis@math.ias.edu}

\begin{abstract}
The seminal work of DiPerna and Lions \cite{DL} guarantees the existence and uniqueness of regular Lagrangian flows for Sobolev vector fields. The latter is a suitable selection of trajectories of the related ODE satisfying additional compressibility/semigroup properties. A long-standing open question is whether the uniqueness of the regular Lagrangian flow is a corollary of the uniqueness of the trajectory of the ODE for a.e. initial datum. Using Ambrosio's superposition principle we relate the latter to the uniqueness of positive solutions of the continuity equation and we then provide a negative answer using tools introduced by Modena and Sz\'ekelyhidi in the recent groundbreaking work \cite{MoSz2019AnnPDE}. On the opposite side, we introduce a new class of asymmetric Lusin-Lipschitz inequalities and use them to prove the uniqueness of positive solutions of the continuity equation in an integrability range which goes beyond the DiPerna-Lions theory.
\end{abstract}
\footnote{MSC classification:  35F50 (35A02 35Q35). Keywords: Transport equation, regular lagrangian flow, ODE, uniqueness.
}
\maketitle


\section{Introduction}
In this paper we study positive solutions of the continuity equation
\begin{equation}
\label{eqn:CE}\partial_t \rho + \div (u \rho) = 0 
\end{equation}
and the related system of ordinary differential equations $\dot\gamma(t) = u(t, \gamma(t))$.
To avoid technicalities we restrict our attention to {\em periodic} vector fields, i.e. $u: I \times \TT^d \to \mathbb R^d$, where 
$\TT^d$ is the $d$-dimensional torus and $I\subset \RR$. In the sequel we omit the superscript $d$ and simply write $\TT$ and use the notation $\Leb{d}$ for the Lebesgue measure on the whole space $\mathbb R^d$ and on $\TT$. 

\begin{definition}\label{defn:int-curve}
Let  $u: (0,T) \times \TT \to \RR^d$ be a Borel map. We say that $\gamma \in AC([0,T]; \TT)$ is an integral curve of $u$ starting at $x$ if $\gamma(0)=x$ and $\gamma'(t) = u(t, \gamma(t))$ for a.e. $t\in [0,T]$.
\end{definition}

Note that in Definition \ref{defn:int-curve} it matters how $u$ is defined at {\em every point}: different pointwise representatives for $u$ might have different integral curves starting at the same $x$. When $u$ is smooth (Lipschitz) the trajectories are unique and, after ``bundling them'' into a flow map $X: (0,T)\times \TT\to \TT$, solutions of \eqref{eqn:CE} can be recovered via Liouville's classical theorem. This fact can be elegantly encoded using measure theory in the formula $(X (t, \cdot))_\sharp (\rho (0, \cdot) \Leb{d}) = \rho (t, \cdot) \Leb{d}$. For less regular vector fields it is customary, after the seminal paper \cite{DL,Ambr-inv2004}, to introduce the notion of {\em regular Lagrangian flows}. The latter consists, following one of its equivalent formulations given in \cite{Ambr-inv2004}, of a measurable selection $X$ of integral curves of the ODE for which $X(t, \cdot)_\# \Leb{d} \leq C \Leb{d}$.

\begin{definition}\label{def:regflow}
Let  $u: (0,T) \times \TT \to \RR^d$ be Borel. $X: [0,T]\times \TT\to \TT$ is a {\em 
regular Lagrangian flow} of $u$ if
\begin{itemize}
\item[(i)] for $\Leb{d}$-a.e. $x\in \TT$, $X(\cdot,x)\in AC([0,T];\TT)$ is an integral curve of $u$ starting at $x$;
\item[(ii)] there exists a constant $C=C(X)$ satisfying $X(t,\cdot)_\#\Leb{d}\leq C\Leb{d}$.
\end{itemize}
\end{definition}

The pointwise definition of $u$ matters in Definition \ref{def:regflow} as well. However, it is an outcome of the DiPerna-Lions theory that, under suitable Sobolev regularity assumptions on $u$, regular Lagrangian flows exist, satisfy a semigroup property, are unique, stable under approximations, and independent of the pointwise representative chosen for $u$. 

Such uniqueness and stability result is sometimes inappropriately regarded as ``almost everywhere uniqueness of integral curves'', even though it is well known among the experts that the DiPerna-Lions theory does not imply the statement ``for a.e. $x$ there is a unique integral curve of $u$ starting at $x$''. In fact whether such ``classical'' uniqueness theorem holds for Sobolev vector fields is a long-standing open question, see \cite[p. 546]{DL}, \cite[p.231]{Ambr-inv2004}, \cite[Section 2.3]{Alb2012Lincei}, 
\cite[Open problems, section 4]{Luigi-CIME}. This question has had a positive answer for specific vector fields, such as suitable weak solutions of the Navier-Stokes system \cite{RoSa2009nonlin,RoSa2009}, based on estimates of the dimension of the singular set originally due to \cite{CaffarelliKohnNirenberg82}. Recently, in \cite{CC18} the authors use a suggestion of Jabin to prove almost everywhere uniqueness of the trajectories when $u\in C ([0,T], W^{1,r} (\TT, \RR^d))$ for some $r>d$.  One aim of this paper is to show that in general, under the assumptions of the DiPerna-Lions theory, the answer is negative. 

\begin{theorem}\label{thm:main-flow}
For every ${{d\geq 2}}$, $r< d$, $s<\infty$ and every $T>0$ there is a divergence-free vector field $u \in C ([0,T], W^{1,r} (\TT, \RR^d)\cap L^s)$ such that the following holds for every Borel map $v$ with $u=v$ $\Leb{d+1}$-a.e.:
\begin{itemize}
\item[(NU)] There is a measurable $A \subseteq \TT$ with positive Lebesgue measure such that for every $x \in A$ there are at least two integral curves of $v$ starting at $x$. 
\end{itemize}
\end{theorem}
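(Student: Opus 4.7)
The plan is to derive the pointwise non-uniqueness of integral curves from non-uniqueness of positive $L^\infty$ solutions of the continuity equation \eqref{eqn:CE}, and then to build such solutions by convex integration. Concretely, I will produce a divergence-free $u \in C([0,T], W^{1,r}(\TT,\RR^d) \cap L^s(\TT,\RR^d))$ and a bounded positive solution $\rho \in L^\infty$ of \eqref{eqn:CE} satisfying $\rho(0,\cdot) \equiv 1$ but $\rho(T,\cdot) \not\equiv 1$. Because $\div u = 0$, the constant density $\rho_1 \equiv 1$ is automatically a solution, so we obtain two distinct positive solutions $\rho_1$, $\rho_2 := \rho$ with the same initial datum. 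Ambrosio's superposition principle then represents each $\rho_i$ as $\rho_i(t,\cdot)\Leb{d} = (e_t)_\# \eta_i$, with $\eta_i$ a probability measure on $AC([0,T];\TT)$ concentrated on integral curves of $u$; both measures share $\Leb{d}$ as their time-zero marginal but have different time-$T$ marginals. If, for $\Leb{d}$-a.e.\ $x$, the integral curve of $u$ starting at $x$ were unique, a measurable selection argument would pin down both $\eta_i$ as the pushforward of $\Leb{d}$ under a common selected flow map, forcing $\eta_1 = \eta_2$ and contradicting $\rho_1 \neq \rho_2$.

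The bulk of the work is thus the construction of $u$ and $\rho_2$. I would carry out a Nash iteration modelled on Modena--Sz\'ekelyhidi \cite{MoSz2019AnnPDE}: start from a smooth pair $(\bar u, \bar \rho)$ interpolating between $\bar\rho(0,\cdot) \equiv 1$ and some $\bar\rho(T,\cdot) \not\equiv 1$ at the cost of a large transport defect, and at each step add highly oscillatory Mikado-type correctors to both fields that reduce the defect while controlling the $C_t W^{1,r}_x$ and $C_t L^s_x$ norms of the velocity and the $L^\infty$ norm of the density perturbation. Keeping the density perturbations uniformly small relative to the background value $1$ guarantees positivity of the limit $\rho_2$; the divergence-free constraint on $u$ is preserved in each Mikado step by design. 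Hitting the full range $r < d$ and arbitrarily high $L^s$ simultaneously requires a careful tuning of the concentration parameters of the Mikado flows, exploiting the fact that they oscillate in $d-1$ transverse directions.

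Finally, to obtain non-uniqueness for every Borel representative $v$ of $u$ (meaning $v = u$ $\Leb{d+1}$-a.e.), I observe that $(e_t)_\# \eta_i = \rho_i(t,\cdot)\Leb{d} \leq \|\rho_i\|_\infty \Leb{d}$ uniformly in $t$, so a Fubini argument applied to the $\Leb{d+1}$-null set $\{(t,x) : u(t,x) \neq v(t,x)\}$ shows that, for $\eta_i$-a.e.\ curve $\gamma$, $u(t,\gamma(t)) = v(t,\gamma(t))$ for $\Leb{1}$-a.e.\ $t$. Hence each $\eta_i$ is also supported on integral curves of $v$, and the selection argument applied to $v$ yields the desired positive-measure set $A \subseteq \TT$ from which at least two distinct $v$-integral curves emanate. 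The genuine obstacle in this plan is the convex integration step above: extending Modena--Sz\'ekelyhidi to the sharper range $r < d$ with simultaneous $L^s$ integrability for every finite $s$, while preserving positivity of the density, is where the entire technical effort concentrates; the superposition arguments at both ends are comparatively soft.
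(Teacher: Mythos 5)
Your reduction of the theorem to nonuniqueness of positive solutions of the continuity equation via Ambrosio's superposition principle is essentially the argument of the paper, and the handling of an arbitrary Borel representative $v$ is fine (indeed you do not even need the bound $(e_t)_\#\eta_i\leq \|\rho_i\|_\infty\Leb d$: absolute continuity $(e_t)_\#\eta_i=\rho_i(t,\cdot)\Leb d\ll\Leb d$ already makes the Fubini argument work, and the integrability hypothesis of the superposition principle is supplied by $\rho\in C(L^p)$, $u\in C(L^{p'})$).

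The genuine gap is in the convex integration step, and it is not a technical one: the object you propose to build cannot exist. You ask for a \emph{bounded} positive nonconstant solution $\rho_2\in L^\infty$ with $\rho_2(0,\cdot)\equiv 1$ for a divergence-free field $u\in C([0,T],W^{1,r})$, obtained by ``keeping the density perturbations uniformly small in $L^\infty$ relative to the background value $1$''. For $W^{1,1}$ vector fields with bounded divergence, the DiPerna--Lions theory gives uniqueness of solutions in $L^\infty([0,T],L^\infty)$, and the paper's own Theorem \ref{thm:wellposedness} (take $p=\infty$, any $r>1$) gives uniqueness of nonnegative solutions in that class; so a bounded nonconstant solution emanating from $\rho_0\equiv 1$ is impossible for every $r$ in the range you need. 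This is also visible in the exponent bookkeeping of the scheme you invoke: the Modena--Sattig/Modena--Sz\'ekelyhidi-type constructions require $\tfrac1p+\tfrac1r>1+\tfrac1d$ (resp. $1+\tfrac1{d-1}$), which forces $p<\infty$, and in fact $p\to 1$ as $r\to d$; to get $u\in L^s$ for large $s$ one takes $p'=s$, i.e. again $p$ close to $1$. So the density produced by any such iteration is only in $C([0,T],L^p)$ with $p$ near $1$, and its perturbations are concentrated bumps that are \emph{large} in $L^\infty$. Positivity cannot be obtained by uniform smallness of the perturbations; in the paper it is obtained by choosing nonnegative building blocks $\Theta_{\xi,\mu,\sigma}\geq 0$ and controlling only the negative part of the correction, namely the one-sided estimate $\inf(\rho_{q+1}-\rho_q)\geq-\delta_{q+1}^{1/p}$ of Proposition \ref{prop:inductive}(b), coming from the constant-in-space corrector $\theta^{(c)}_{q+1}$. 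A further, secondary, inaccuracy: plain Mikado flows (time-independent pipes oscillating in $d-1$ transverse directions) only reach $\tfrac1p+\tfrac1r>1+\tfrac1{d-1}$; to cover the full range $r<d$ the paper must use space-time concentrated building blocks (bumps travelling along space-time lines, with a partition of unity discretizing the speeds, and a disjointness argument for the supports which is delicate when $d=2$). As it stands, your plan would prove a statement contradicting known uniqueness theorems, so the central construction needs to be replaced by the $C(L^p)$, one-sided-positivity scheme.
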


Given \cite[Theorem 5.2]{CC18}, the above statement covers the optimal range, except for the endpoint $r=d$. In fact an improvement of the argument in \cite[Theorem 5.2]{CC18} allows to prove almost everywhere uniqueness of trajectories for a function space which shares the same scaling properties of $W^{1,d}$, namely when $Du$ belongs to the Lorentz space $L^{d,1}$, see Corollary \ref{c:Lorentz} below. We also expect Theorem \ref{thm:main-flow} to hold for some Sobolev vector fields that are continuous in space-time \cite{future}, thereby answering the question of \cite[Section 2.3]{Alb2012Lincei} and providing an example under the classical assumptions of Peano's theorem.

The theorem above is a consequence of Ambrosio's superposition principle (see \cite[Theorem 3.2]{Luigi-CIME}) and of the following nonuniqueness result at the PDE level, which in turn will be proved using ``convex integration type'' techniques borrowed from a groundbreaking work of Modena and Sz\'ekelyhidi \cite{MoSz2019AnnPDE,MoSz2019CalcVar}, improved later by Modena and Sattig \cite{MoSa2019}(we refer to \cite{DLSZ09,DLSZ13,DSZ17,Isett2018Annals,BDLSV17,BV} and the references therein for the birth of this and related lines of research). 

\begin{theorem}\label{thm:main-CE}
Let 
${{d\geq 2}}$, $p \in (1,\infty), r \in [1,\infty]$ be such that 
\begin{equation}
\label{hp:exp}
\frac 1p + \frac 1r > 1+ \frac 1{d}
\end{equation}
and denote by $p'$ the dual exponent of $p$, i.e. $\frac{1}{p}+\frac{1}{p'}=1$. 
Then for every $T>0$ there exists a divergence-free vector field $u \in C ([0,T], W^{1,r} (\TT, \RR^d)\cap L^{p'})$ and a {\em nonconstant} $\rho \in C ([0,T], L^p (\TT))$ such that \eqref{eqn:CE} holds with initial data $\rho(0, \cdot) =1$ and for which $\rho \geq c_0$ for some positive constant $c_0$.
\end{theorem}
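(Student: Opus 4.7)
The plan is to prove this by an iterative convex-integration scheme following Modena--Sz\'ekelyhidi \cite{MoSz2019AnnPDE} and the full-dimensional-concentration refinement of Modena--Sattig \cite{MoSa2019}. At each step $q$ I produce a smooth triple $(\rho_q, u_q, R_q)$, with $\div u_q = 0$, satisfying the continuity--defect equation
\[
\partial_t \rho_q + \div(u_q\rho_q) = -\div R_q,
\]
together with inductive bounds $\|R_q\|_{L^1}\leq \delta_{q+1}\to 0$, summable control of the increments $\vartheta_{q+1}=\rho_{q+1}-\rho_q$ and $w_{q+1}=u_{q+1}-u_q$ in $C_tL^p$ and $C_t(W^{1,r}\cap L^{p'})$ respectively, and the pointwise lower bound $\rho_q\geq c_0>0$. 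I start from $\rho_0\equiv 1$, $u_0\equiv 0$, and a prescribed non-zero smooth $R_0$ supported in a compact sub-interval of $(0,T)$; this ensures the first perturbation is non-trivial and forces the limit density to be non-constant, while time-localisation of all subsequent perturbations preserves $\rho(0,\cdot)=1$.

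At the generic step $q\to q+1$ I decompose
\[
\rho_{q+1}=\rho_q + \vartheta_{q+1} + \vartheta^c_{q+1},\qquad u_{q+1}=u_q + w_{q+1} + w^c_{q+1},
\]
where $\vartheta_{q+1}$ and $w_{q+1}$ are linear combinations of concentrated Mikado building blocks $(\Theta_\xi, W_\xi)_{\xi\in \Lambda}$: non-negative density profiles $\Theta_\xi\geq 0$ concentrated in periodic tubes of radius $\mu^{-1}$ in direction $\xi$, compatible divergence-free velocity profiles $W_\xi$, both oscillating at spatial frequency $\lambda$ and intermittent in time with frequency $\nu$. Slowly varying coefficients $a_\xi, b_\xi$ (supplied by a Geometric-Lemma-type algebraic identity) are chosen so that the low-frequency average of $\vartheta_{q+1}\,w_{q+1}$ equals $-R_q$. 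The correctors $\vartheta^c_{q+1}, w^c_{q+1}$ are lower order and restore the zero-mean condition $\int(\rho_{q+1}-1)\,dx=0$ and $\div u_{q+1}=0$; the new defect $R_{q+1}$ is obtained by applying an anti-divergence operator to the high-frequency remainders in $\partial_t\vartheta_{q+1}+\div(\rho_q w_{q+1}+\vartheta_{q+1} u_q+\vartheta_{q+1} w_{q+1}-R_q)$. Balancing the scalings $\|\vartheta_{q+1}\|_{L^p}\lesssim \mu^{(d-1)(1-1/p)}\,\nu^{-1/(2p)}$, $\|w_{q+1}\|_{L^{p'}}\lesssim \mu^{(d-1)/p}\,\nu^{1/(2p)}$ and $\|\nabla w_{q+1}\|_{L^r}\lesssim \lambda\,\mu^{(d-1)(1-1/r)}$ against the identity $\|\vartheta_{q+1} w_{q+1}\|_{L^1}\sim \|R_q\|_{L^1}$ makes the scheme feasible precisely when $\tfrac 1p + \tfrac 1r > 1+ \tfrac 1{d}$; the gain $\tfrac 1d$ in place of $\tfrac 1{d-1}$ comes from the temporal-spatial intermittency introduced in \cite{MoSa2019}.

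The main obstacle, beyond reproducing the error estimates of \cite{MoSa2019} in the present setting, is the pointwise lower bound $\rho_{q+1}\geq c_0$: since $\|\vartheta_{q+1}\|_{L^p}$ is of order one, the $L^\infty$ norm of $\vartheta_{q+1}$ is not summable and cannot be absorbed in a trivial way. The key idea is to take the density profiles $\Theta_\xi\geq 0$; then the only negative contribution to $\vartheta_{q+1}$ comes from the mean-zero corrector $\vartheta^c_{q+1}$, which is a small constant of size at most $\|\Theta_\xi\|_{L^1(\TT)}\sim \mu^{-(d-1)/p}$. Choosing $\mu_q\to \infty$ fast enough forces $\sum_q \|\vartheta^-_{q+1}\|_{L^\infty}\leq \tfrac 12$, so $\rho_q\geq \tfrac 12$ for every $q$, and this bound passes to the limit.

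Passing $q\to \infty$ and using the telescoping control of the increments yields $\rho_q\to \rho$ in $C([0,T];L^p)$ and $u_q\to u$ in $C([0,T]; W^{1,r}\cap L^{p'})$, while $R_q\to 0$ in $C([0,T];L^1)$; the limit pair $(\rho, u)$ solves \eqref{eqn:CE} in the sense of distributions, satisfies $\rho(0,\cdot)=1$ and $\rho\geq \tfrac 12$, and is non-constant thanks to the non-triviality of the first step.
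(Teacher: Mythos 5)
Your overall architecture (continuity--defect iteration, nonnegative density building blocks so that the only negative contribution to the perturbation is the small constant mean corrector, time--localisation to keep $\rho(0,\cdot)=1$) is indeed the key idea of the paper, but as written the proposal has genuine gaps at the two places where the real work lies. First, you treat the range \eqref{hp:exp} as coming for free by ``reproducing the error estimates of Modena--Sattig with nonnegative profiles''. This is precisely what the authors state they were unable to do, and it is why the paper builds different blocks: concentrated bumps of radius $\mu^{-1}$ (full $d$-dimensional concentration) travelling along lines with speed $\mu^{d/p'}\sigma^{1/p'}$, solving the transport equation \eqref{eqn:itsolves} exactly. Because the amplitude $\sigma$ enters as a \emph{speed}, it cannot vary continuously with $R_\ell(t,x)$; the paper must discretize $|R_\ell|$ into levels $n/\kappa$ through the partition of unity $\chi(\kappa|R_\ell|-n)$, and must decompose the direction $R_\ell/|R_\ell|$ with \emph{nonnegative} coefficients (Lemma \ref{lemma:geom}), since signed coefficients would destroy the lower bound on $\theta^{(p)}_{q+1}$. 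Your sketch sidesteps all of this, and moreover your stated scalings $\|\vartheta_{q+1}\|_{L^p}\lesssim \mu^{(d-1)(1-1/p)}\nu^{-1/(2p)}$, $\|\nabla w_{q+1}\|_{L^r}\lesssim\lambda\,\mu^{(d-1)(1-1/r)}$ are those of codimension-$(d-1)$ (tube) concentration; without a worked-out balance showing how the temporal parameter $\nu$ upgrades the threshold, these exponents land you at $\frac1p+\frac1r>1+\frac1{d-1}$, not at the claimed $1+\frac1d$. (Also $\|\Theta_\xi\|_{L^1}\sim\mu^{-(d-1)/p'}$, not $\mu^{-(d-1)/p}$, if $\|\Theta_\xi\|_{L^p}\sim1$ on a tube of measure $\mu^{-(d-1)}$.)

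Second, the theorem is claimed for $d\geq 2$, and in $d=2$ your scheme breaks: once the geometric decomposition forces several non-parallel directions $\xi$ with positive coefficients, static periodic tubes in $\TT^2$ in different directions necessarily intersect, so the cross terms $\Theta_\xi W_{\xi'}$ do not vanish and the cancellation of $R_\ell$ in the quadratic term fails. The paper needs its travelling, point-like supports exactly to fix this: for $d\geq3$ disjointness is obtained by spatial translations (Lemma \ref{lemma:disjointsupports}), while for $d=2$ a dedicated combinatorial argument (Proposition \ref{prop:disjointsupportdim2}, Lemmas \ref{lemma:disjointsupportdim2} and \ref{lemma:velocitiesdim2}) chooses rational speed ratios and offsets so that the moving supports stay at distance $\gtrsim c_0/n$ for all times. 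Your proposal is silent on this. Finally, two smaller points: with your start $\rho_0\equiv1$, $u_0\equiv0$ you need the compatibility $\div R_0=0$ (otherwise the triple does not solve \eqref{eqn:CE-R}), and the nonconstancy of the limit does not follow merely from ``the first perturbation is non-trivial'' --- you need a quantitative lower bound on $\|\rho_1-1\|_{L^p}$ dominating $\sum_{q\geq1}\|\rho_{q+1}-\rho_q\|_{L^p}$. The paper avoids this by taking $\rho_0$ already nonconstant (with $\|1-\rho_0\|_{L^p}\geq\frac1{16}$) and making all increments summably small.
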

Compared to the results in \cite{MoSz2019AnnPDE} and \cite{MoSa2019} the addition (crucial for our application) is the positivity of the solution $\rho$.
While it is relatively simple to modify the approach of Modena and Sz\'ekelyhidi in \cite{MoSz2019AnnPDE} in order to achieve Theorem \ref{thm:main-CE} when $\frac{1}{p}+\frac{1}{r} > 1 + \frac{1}{d-1}$, we have not been able to do the same with the one in \cite{MoSa2019} to cover the range $1+\frac{1}{d-1} \geq \frac{1}{p}+\frac{1}{r} > 1 + \frac{1}{d}$. Our proof is therefore relatively different from the one of \cite{MoSa2019} and in fact less complicated and shorter. At the technical level we introduce suitable space-time flows which compared to the basic building blocks of \cite{MoSa2019} are more similar to Mikado flows: in a nutshell our flows are a perturbation of point masses traveling on a space-time line.
This approach makes a part of our argument more similar to \cite{MoSz2019AnnPDE}, but it has the technical drawback that we need to introduce a suitable partition of unity to discretize the time velocities of the moving particles (a similar idea was used first in \cite{DLSZ13}). One subtle part of our proof is a combinatorial argument to ensure that the supports of the flows are disjoint in $2$ space dimensions. Since in $3$ space dimensions and higher the latter can be completely omitted and the proof is simpler we have decided to first present the full arguments for Theorem \ref{thm:main-CE} when $d\geq 3$ and then show in Section \ref{s:2d} which modifications are necessary in the case $d=2$. 

\medskip

Our interest in Theorem \ref{thm:main-CE} was triggered by the gap between the DiPerna-Lions theory, which guarantees uniqueness for $\frac{1}{p}+\frac{1}{r} \leq 1$, and the nonuniqueness results of \cite{MoSz2019AnnPDE,MoSz2019CalcVar,MoSa2019}. In particular we are able to show that in some intermediate range of exponents (strictly containing the DiPerna-Lions range, but not reaching the full complement of the Modena-Sattig-Sz\'ekelyhidi range) {\em positive} solutions are in fact unique.

\begin{theorem}\label{thm:wellposedness}
	Let $d\ge 2$, $p\in [1,+\infty]$ and $r\in [1,+\infty]$ be such that
	\begin{equation}\label{eq:wellposednessrange}
	\frac{1}{p}+\frac{1}{r}<1+\frac{1}{d-1}\frac{r-1}{r}.
	\end{equation}
	Let {$u\in L^1([0,T],W^{1,r}(\TT,\RR^d))$} be a vector field satisfying $\div\, u\in L^{\infty}$. Then, for $p>1$, \eqref{eqn:CE} admits a unique solution among all nonnegative, weakly continuous in time densities $\rho\in L^{\infty}([0,T],L^p(\TT))$ with $\rho(0,\cdot)=\rho_0$. When $p=1$ ({i.e. $r>d$}) uniqueness holds in the class of nonnegative weakly-star continuous densities $\rho\in L^{\infty}([0,T],\mathscr{M}(\TT
	))$ with $\rho(0,\cdot)=\rho_0\Leb d$. In particular, any such $\rho$ is Lagrangian, i.e.
	\begin{equation*}
	\rho(t,\cdot )\Leb d=X(t,\cdot)_{\#}(\rho_0 \Leb d)
	\quad \text{for every $t\in [0,T]$},
	\end{equation*}
	where $X$ denotes the unique regular Lagrangian flow of Definition \ref{def:regflow}.
\end{theorem}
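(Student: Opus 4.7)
The plan is to combine Ambrosio's superposition principle with the asymmetric Lusin--Lipschitz inequality announced in the abstract, and to conclude via a Gronwall-type estimate of Crippa--De Lellis flavor. Under the hypotheses, the DiPerna-Lions/Ambrosio theory provides a unique regular Lagrangian flow $X$ of $u$, so the task reduces to showing that every nonnegative weakly continuous solution $\rho$ agrees with $X(t,\cdot)_\#(\rho_0\Leb d)$. By Ambrosio's superposition principle, any such $\rho$ admits a representation $\rho(t,\cdot)\Leb d=(e_t)_\#\eta$ for a positive measure $\eta$ on $C([0,T];\TT)$ concentrated on integral curves of $u$, with $(e_0)_\#\eta=\rho_0\Leb d$; disintegrating $\eta=\int\eta_x\,d(\rho_0\Leb d)(x)$, it suffices to prove that $\eta_x=\delta_{X(\cdot,x)}$ for $\rho_0\Leb d$-a.e.\ $x$.

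To achieve this concentration, I would introduce the same-starting-point coupling $\pi:=\int(\eta_x\otimes\delta_{X(\cdot,x)})\,d(\rho_0\Leb d)(x)$ on pairs of paths, and monitor the regularized log-functional
$$\Phi_\delta(t):=\int\log\!\Bigl(1+\tfrac{|\gamma_1(t)-\gamma_2(t)|}{\delta}\Bigr)\,d\pi(\gamma_1,\gamma_2),\qquad \delta>0.$$
The ODE gives
$$\Phi_\delta'(t)\leq\int\frac{|u(t,\gamma_1(t))-u(t,\gamma_2(t))|}{\delta+|\gamma_1(t)-\gamma_2(t)|}\,d\pi,$$
and the objective is to bound this uniformly in $\delta$ by an $L^1_t$ function. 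Once this is in place, $\Phi_\delta(0)=0$ and monotone convergence as $\delta\to 0$ force $\gamma_1(t)=\gamma_2(t)$ $\pi$-a.e., i.e.\ $\eta_x=\delta_{X(\cdot,x)}$ for $\rho_0\Leb d$-a.e.\ $x$.

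The main obstacle, and the essential novelty of the paper, is closing the integral estimate under \eqref{eq:wellposednessrange}. The symmetric Lusin--Lipschitz bound $|u(x)-u(y)|\leq C|x-y|(M|Du|(x)+M|Du|(y))$ controls the integrand above by $M|Du|(\gamma_1)+M|Du|(\gamma_2)$ and, after pushforward, by a term of the order $\|M|Du|\|_r\|\rho\|_{r'}$, which requires $\tfrac{1}{p}+\tfrac{1}{r}\leq 1$. The improvement would come from an \emph{asymmetric} pointwise estimate of the form $|u(x)-u(y)|\leq C|x-y|(g(x)+h(y))$ where $g$ and $h$ involve different (fractional) maximal functions of $|Du|$: exploiting that the two marginals of $\pi$ are respectively $\rho(t,\cdot)\Leb d\in L^p$ and a measure dominated by a constant multiple of $\Leb d$ (by the compressibility (ii) of $X$), one pairs $g$ with $\rho$ via Hölder in $L^p$--$L^{p'}$ while $h$ is integrated against $\Leb d$ via a plain $L^r$ bound. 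A judicious choice of these operators, tuned to the Sobolev scaling $p^\ast=dp/(d-1)$ implicit in the threshold \eqref{eq:wellposednessrange} (which rewrites as $\tfrac{1}{p^\ast}+\tfrac{1}{r}<1$), should produce the sharp range.

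Once the asymmetric inequality yields $\Phi_\delta'(t)\leq C(t)\in L^1_t$, Gronwall and $\delta\to 0$ identify $\rho(t,\cdot)\Leb d=X(t,\cdot)_\#(\rho_0\Leb d)$. For the endpoint $p=1$, $r>d$, the same scheme runs with $L^\infty_t\mathscr M(\TT)$ replacing $L^\infty_t L^p$: here Morrey's embedding makes $u$ genuinely continuous in space, so the asymmetric bound degenerates into a Lipschitz estimate outside a set of small measure and the argument simplifies.
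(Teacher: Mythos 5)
Your overall architecture (superposition principle, disintegration $\eta_x$, reduce to $\eta_x=\delta_{X(\cdot,x)}$, conclude by a Gronwall argument driven by an asymmetric Lusin--Lipschitz inequality with the $L^{p'}$ function paired against $\rho\in L^p$ and the low-integrability function paired against the compressible flow) is exactly the strategy of the paper, and your rewriting of \eqref{eq:wellposednessrange} as $\frac{1}{p^\ast}+\frac{1}{r}<1$, i.e. $p'<\frac{r(d-1)}{d-r}$, correctly identifies the needed exponent for the good function (this is Corollary \ref{cor:LusinLipsLinearversion}, which you may invoke rather than re-derive). However, the averaged Crippa--De Lellis functional $\Phi_\delta$ as you set it up has a genuine gap: you claim that the second marginal of $\pi$ at time $t$ is ``dominated by a constant multiple of $\Leb d$ by the compressibility of $X$''. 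It is not: that marginal is $X(t,\cdot)_\#(\rho_0\Leb d)$, not $X(t,\cdot)_\#\Leb d$, and its density is only in $L^p$ (no better, since $\rho_0$ is merely $L^p$). Consequently the term $\int h(\gamma_2(t))\,d\pi=\int h_t(X(t,x))\rho_0(x)\,dx$ with $h_t\in L^1$ (or $L^r$) cannot be bounded by $C\|h_t\|_{L^1}$; estimating it by H\"older forces $h_t\in L^{p'}$, which brings you back to the symmetric estimate and the DiPerna--Lions range $\frac1p+\frac1r\le 1$. So the bound $\Phi_\delta'(t)\le C(t)\in L^1_t$ does not close in the claimed range, and the asymmetry is lost precisely at the step where it was supposed to help.

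The paper circumvents this by running Gronwall pointwise along pairs of curves rather than in average, exploiting absolute continuity instead of domination: writing $|u(t,x)-u(t,y)|\le|x-y|(a_t(x)+b_t(y))$ with $a_t\in L^1$, $b_t\in L^{p'}$, one shows (i) $\int_\TT\int_0^T a_t(X(t,x))\,dt\,dx\le C\int_0^T\|a_t\|_{L^1}\,dt<\infty$ using only $X(t,\cdot)_\#\Leb d\le C\Leb d$, hence $\int_0^T a_t(X(t,x))\,dt<\infty$ for $\Leb d$-a.e.\ $x$ and therefore for $\rho_0\Leb d$-a.e.\ $x$ because $\rho_0\Leb d\ll\Leb d$; and (ii) $\int_0^T b_t(\gamma(t))\,dt<\infty$ for $\eta_x$-a.e.\ $\gamma$ and $\rho_0$-a.e.\ $x$, via the superposition formula and H\"older $L^{p'}$--$L^p$ against $\rho(t,\cdot)$. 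Then Gronwall applied to $f(t)=|X(t,x)-\gamma(t)|$ for each such $(x,\gamma)$ gives $\gamma=X(\cdot,x)$, i.e.\ $\eta_x=\delta_{X(\cdot,x)}$. If you want to salvage your functional $\Phi_\delta$, you would need $\rho_0\in L^\infty$ (or to pointwise-localize exactly as above), so the pointwise version is not an optional variant but the step that makes the asymmetric inequality usable. Your endpoint discussion ($p=1$, $r>d$) is also too vague as stated; the paper reduces it to the $\Leb d$-a.e.\ uniqueness of trajectories (Corollary \ref{c:Lorentz} or \cite[Corollary 5.2]{CC18}), which follows from the fully asymmetric estimate $|u(x)-u(y)|\le g(x)|x-y|$ rather than from a Lipschitz bound off a small set.
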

{\begin{remark} Observe that, under the above assumptions $u\in L^1 ([0,T], L^{p'})$. Indeed, if $r>d$ Morrey's embedding guarantees $u\in L^1 ([0,1], L^q)$ for every $q\in [1, \infty]$ and if
$r\leq d$ Sobolev's embedding guarantees $u\in L^1 ([0,T], L^{q})$ for every $q< \frac{rd}{d-r}$ while \eqref{eq:wellposednessrange} is equivalent to $p'< \frac{r(d-1)}{d-r}$.
\end{remark}}
Theorem \ref{thm:main-CE} extends \cite[Corollary 5.4]{CC18}, in which the case $r>d$ has been settled as a consequence of the $\Leb d$-a.e. uniqueness result for trajectories mentioned above. 
The proofs of the latter and of Theorem \ref{thm:wellposedness} employ all some suitable Lusin-Lipschitz type estimates for $u$, an idea pioneered in \cite{ALM2005} and \cite{CD18} and which has proved quite fruitful in different contexts (see for instance \cite{BoCr2013,bru2018constancy,CoCrSp2016,DLGw22016,BrNg2018}). As it is well known, for sufficiently regular domains $\Omega\subset \RR^d$ and when $p\in (1, \infty]$, a Borel map $u$ belongs to $W^{1,p} (\Omega, \RR)$ if and only if there is a function $g\in L^p (\Omega)$ such that
\begin{equation}\label{e:simmetrica}
|u(x)-u(y)|\leq (g(x)+g(y))|x-y| \qquad \mbox{for a.e. $x,y$.}
\end{equation}
In fact $g$ can be taken to be the classical Hardy-Littlewood maximal function of $|Du|$.
It seems less known (but anyway classical) that for $p>d$ the symmetry in \eqref{e:simmetrica} can be broken to show
\begin{equation}\label{e:tot_asimmetrica}
|u(x) - u(y)|\leq g(x) |x-y|\, .
\end{equation}
Theorem \ref{thm:wellposedness} is based on the idea that an appropriate symmetry-breaking is still possible for smaller exponents $p$. More precisely we have the following proposition, which has its own independent interest. 

\begin{propos}\label{prop:LusinLipschitzasimmetrica}
	Let $1<r\le d$ be fixed. For any $u\in W^{1,r}(\TT)$ and any $\alpha\in (0,\frac{r}{d})$ there exist a negligible set $N\subset \TT$ and a nonnegative function $g\in L^r (\TT)$ satisfying the inequalities
	\begin{equation*}
	\| g\|_{L^r}\le C(\alpha,r,d)\| D u \|_{L^r},
	\end{equation*} 
	\begin{equation}\label{eq:LusinLipparzialmenteAsimmetrica}
	|u(x)-u(y)|\le |x-y|\left( g(x)+g(x)^{\alpha}g(y)^{1-\alpha}\right)
	\quad
	\text{for any}\ x,y\in \RR^d\setminus N.
	\end{equation}
	Moreover, we can assume $N=\emptyset$ provided we choose an appropriate representative of $u\in W^{1,r}(\TT)$ and there is a continuous selection $W^{1,r} \ni u\mapsto g\in L^r$. 
\end{propos}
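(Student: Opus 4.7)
The plan is to define $g$ via a fractional-type maximal function of $|Du|$ and prove the asymmetric inequality via a line-integral estimate with H\"older interpolation. Given $\alpha \in (0, r/d)$, pick an auxiliary exponent $s$ with $\max(1, \alpha d) < s < r$; such $s$ exists since $r > 1$ and $\alpha d < r$. Set $g(x) := C_0\bigl(M(|Du|^s)(x)\bigr)^{1/s}$, where $M$ is the centered Hardy--Littlewood maximal operator on $\TT$ and $C_0 = C_0(\alpha, r, d)$ will be absorbed at the end. By Jensen's inequality $g \ge C_0 M|Du|$, and the bound $\|g\|_{L^r} \le C(\alpha, r, d)\|Du\|_{L^r}$ follows from the boundedness of $M$ on $L^{r/s}$ (valid because $r/s > 1$) after taking $(1/s)$-th roots.

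For the pointwise estimate, set $\eps := |x-y|$ and decompose
\[
|u(x) - u(y)| \le \bigl|u(x) - \mean{B_\eps(x)} u\bigr| + \bigl|\mean{B_\eps(x)} u - u(y)\bigr|.
\]
The first summand is bounded by $C\eps\, M|Du|(x) \le C\eps\, g(x)$ via the classical pointwise Poincar\'e estimate at Lebesgue points. For the second, apply the segment identity $u(x') - u(y) = \int_0^1 Du(y+t(x'-y))\cdot(x'-y)\,dt$ (valid for a.e.\ $x'$ by the ACL representative of $W^{1,r}$), average over $x' \in B_\eps(x)$, and use the change of variable $w = y + t(x'-y)$ to get $|\mean{B_\eps(x)} u - u(y)| \le C\eps \int_0^1 \mean{B_{t\eps}(z_t)}|Du|\,dt$ with $z_t := (1-t)y + tx$. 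For $t \in [1/2, 1]$ one has $B_{t\eps}(z_t) \subset B_{3\eps/2}(x)$ so the contribution is at most $C\eps\, g(x)$; for $t \in [0, 1/2]$ the inclusion $B_{t\eps}(z_t) \subset B_{2t\eps}(y)$ together with the substitution $\rho = 2t\eps$ reduces matters to estimating $\eps^{-1}\int_0^\eps \mean{B_\rho(y)}|Du|\,d\rho$.

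The key asymmetric interpolation is now carried out on $\mean{B_\rho(y)}|Du|$ for $\rho \le \eps$. By Jensen, $\mean{B_\rho(y)}|Du| \le (\mean{B_\rho(y)}|Du|^s)^{1/s} \le g(y)$; on the other hand, the inclusion $B_\rho(y) \subset B_{2\eps}(x)$ combined with enlargement of the domain of integration gives
\[
\mean{B_\rho(y)}|Du|^s \le \frac{|B_{2\eps}(x)|}{|B_\rho(y)|}\,\mean{B_{2\eps}(x)}|Du|^s \le C(\eps/\rho)^d g(x)^s,
\]
so $\mean{B_\rho(y)}|Du| \le C(\eps/\rho)^{d/s} g(x)$. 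Interpolating geometrically with weight $a \in (0, 1)$ yields $\mean{B_\rho(y)}|Du| \le C\, g(y)^a g(x)^{1-a}(\eps/\rho)^{d(1-a)/s}$, and the $\rho$-integral converges iff $d(1-a)/s < 1$. Setting $\alpha := 1 - a$, this is exactly the constraint $\alpha < s/d$, which holds by the choice $s > \alpha d$, and a direct computation gives $\eps^{-1}\int_0^\eps \mean{B_\rho(y)}|Du|\,d\rho \le C\, g(x)^\alpha g(y)^{1-\alpha}$. Collecting all pieces yields \eqref{eq:LusinLipparzialmenteAsimmetrica} up to a multiplicative constant $C(\alpha, r, d)$, absorbed into $C_0$. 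The delicate point is precisely the requirement $\alpha d < s < r$: the upper bound is needed for boundedness of $M$ on $L^{r/s}$, the lower bound for integrability of the singular factor $\rho^{-d(1-a)/s}$, and the two are compatible exactly under the assumption $\alpha < r/d$.

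For the \emph{moreover} part, take $u$ to be its precise representative (the limit of spherical averages wherever it exists, zero otherwise) and extend $g \equiv +\infty$ on the null set where $M(|Du|^s) = \infty$; then the inequality holds for every $x, y \in \TT$, trivially on the exceptional set. Continuity of the selection $u \mapsto g$ from $W^{1,r}$ to $L^r$ follows by writing it as a composition of continuous maps: $u \mapsto |Du|$ (linear), $f \mapsto f^s$ (continuous $L^r_+ \to L^{r/s}_+$ since $\|f^s\|_{L^{r/s}}^{r/s} = \|f\|_{L^r}^r$), $f \mapsto Mf$ (Lipschitz on $L^{r/s}_+$ via $|Mf - Mh| \le M|f-h|$ for nonnegative $f, h$), and $h \mapsto h^{1/s}$.
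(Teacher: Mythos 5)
Your proof of the almost--everywhere inequality is correct and follows a genuinely different route from the paper's. The paper starts from the two--sided Riesz--potential bound \eqref{eq:well-known}, splits the potential centered at $y$ into an annulus $B_R(y)\setminus B_{\eps R}(y)$ and a small ball, estimates the annulus by H\"older with the exponent $q=\alpha d$ (after a preliminary reduction to $\alpha\ge 1/d$ via Young, so that $q\ge 1$) against the larger ball centered at $x$, and then optimizes over the splitting radius $\eps$. You instead average the segment identity to reduce everything to $\eps^{-1}\int_0^\eps \mean{B_\rho(y)}|Du|\,d\rho$ and interpolate each average between $g(y)$ and $(\eps/\rho)^{d/s}g(x)$ with an auxiliary exponent $s\in(\max(1,\alpha d),r)$. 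This is a clean alternative: the free parameter $s$ replaces both the optimization over $\eps$ and the $\alpha\ge 1/d$ reduction, and the constraint $\alpha d<s<r$ transparently encodes the hypothesis $\alpha<r/d$. The $L^r$ bound and the continuity of the selection are handled at the same level of detail as in the paper.

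There is, however, a genuine gap in the \emph{moreover} part. Your use of the segment identity ``for a.e.\ $x'$'' at a \emph{fixed} $y$ already requires care: the ACL property gives absolute continuity on a.e.\ line of each parallel family, and to conclude that for a fixed $y$ the fundamental theorem of calculus holds on $[y,x']$ for a.e.\ $x'$, \emph{with the precise value $u(y)$ at the endpoint}, one needs a Fubini argument over pairs (or polar coordinates centered at $y$), which only yields this for $y$ outside a further null set. For the statement with a null set $N$ this is harmless (put such $y$ into $N$), but it undermines your treatment of $N=\emptyset$: you claim that after taking the precise representative and setting $g\equiv+\infty$ on $\{M(|Du|^s)=\infty\}$ the inequality holds for \emph{every} $x,y$, ``trivially on the exceptional set''. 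But your argument only establishes \eqref{eq:LusinLipparzialmenteAsimmetrica} off a null set that is not shown to be contained in $\{g=\infty\}$: it excludes those $y$ for which the segment identity with endpoint value $u(y)$ fails on a positive measure set of $x'$, and nothing in your construction shows that such $y$ must satisfy $M(|Du|^s)(y)=\infty$. The paper closes exactly this point by defining $u$ at every point of $\{g<\infty\}$ as the (everywhere convergent, by telescoping against $M|Du|\le C g$) limit of mollifications, proving the estimate for the smooth approximations, and passing to the limit pointwise on $\{g<\infty\}$. Your argument can be repaired in the same way: run the segment/interpolation estimate for $u_k:=u\ast\varphi_{2^{-k}}$, for which every step is valid at every point, note that $|Du_k|\le |Du|\ast\varphi_{2^{-k}}$ and Jensen give $M(|Du_k|^s)\le C\,M(|Du|^s)$ up to a harmless enlargement of the maximal radii, and let $k\to\infty$ using $u_k(x)\to u(x)$ and $u_k(y)\to u(y)$ on $\{g<\infty\}$; alternatively, justify the segment identity at every $y$ with $g(y)<\infty$ via polar coordinates about $y$, including the identification of the radial limit with $u(y)$. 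Some such step is needed; as written, the everywhere statement is not established.
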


A simple corollary of the latter statement is an inequality of the form $|u(x)-u(y)|\leq (a(x)+b(y))|x-y|$ where one function, say $b$, can be taken more integrable at the prize of giving up some integrability for the other.
Theorem \ref{thm:wellposedness} follows from the extreme case where the integrability of $b$ is maximized at the expense of reducing the integrability of $a$ to the bare minimum, namely $L^1$, cf. Corollary \ref{cor:LusinLipsLinearversion}. We moreover show that in this case the range of exponents for $b$ obtained in the latter is in fact optimal. 

{Clearly, it is tempting to advance the conjecture that, for positive solutions of the continuity equations, 
well-posedness holds in the range $1+\frac{1}{d} > \frac{1}{p}+\frac{1}{r}$,
namely the complement of the the closure of the range of Theorem \ref{thm:main-CE}. An even more daring conjecture is that the latter statement holds for any solution. However nothing is known without assuming a one-sided bound or, as is the case of \cite{CC18}, some technical property of trajectories of the ODEs. 
}

\section{Iteration and continuity-Reynolds system} 

As in \cite{MoSz2019AnnPDE} we consider the following system of equations in $[0,T] \times \TT$
\begin{equation}\label{eqn:CE-R}
\begin{cases}
\partial_t \rho_{q} + \div(  \rho_{q} u_{q})=-\div R_{q}
\\ \\
\div u_q=0.
\end{cases}
\end{equation}
We then fix three parameters $a_0$, $b>0$ and $\beta>0$, to be chosen later only in terms of $d$, $p$, $r$, and for any choice of $a>a_0$ we define
\[
\lambda_0 =a, \quad \lambda_{q+1} = \lambda_q^b \quad \mbox{and}\quad
\delta_{q} = \lambda_{q}^{-2\beta}\, .
\]
The following proposition builds a converging sequence of functions with the inductive estimates
\begin{equation}
\label{eqn:ie-1}
\max_t \|R_q (t, \cdot)\|_{L^1} \leq \delta_{q+1}
\end{equation}
\begin{equation}
\label{eqn:ie-2}
\max_t \left(\| \rho_q (t, \cdot)\|_{C^1} +\|\partial_t \rho_q (t, \cdot)\|_{C^0} + \|u_q (t, \cdot)\|_{W^{1,p'}} + \| u_q (t, \cdot)\|_{W^{2,r}} + \|\partial_t u_q (t, \cdot)\|_{L^1}\right) 
\leq \lambda_{q}^\alpha\, ,
\end{equation}
where $\alpha$ is yet another positive parameter which will be specified later. 

\begin{propos}\label{prop:inductive}
	There exist $\alpha,b, a_0, M>5$, $0<\beta<(2b)^{-1}$ such that the following holds. For every $a\geq a_0$, if
	$(\rho_q, u_q, R_q)$ solves \eqref{eqn:CE-R} and enjoys the estimates \eqref{eqn:ie-1}, \eqref{eqn:ie-2}, then there exist $(\rho_{q+1}, u_{q+1}, R_{q+1})$ which solves \eqref{eqn:CE-R}, enjoys the estimates \eqref{eqn:ie-1}, \eqref{eqn:ie-2} with $q$ replaced by $q+1$ and also the following properties:
	\begin{itemize}
		\item[(a)] $\max_t [ \|( \rho_{q+1}- \rho_q) (t, \cdot)\|_{L^p}^p +\|(u_{q+1}- u_q) (t, \cdot)\|_{W^{1,r}}^r +\|(u_{q+1}- u_q) (t, \cdot)\|_{L^{p'}}^{p'} ] \leq M \delta_{q+1}$
		\item[(b)] $\inf (\rho_{q+1} - \rho_q) \geq - \delta_{q+1}^{1/p}$
		\item[(c)] if for some $t_0>0$ we have that $\rho_q(t, \cdot) = 1$, $R_q(t, \cdot)=0$ and $u_q(t, \cdot)=0$ for every $t\in [0,t_0]$, then $\rho_{q+1}(t, \cdot) = 1$, $R_{q+1}(t, \cdot)$ and $u_{q+1}(t, \cdot)=0$ for every $t\in [0,t_0- \lambda_q^{-1-\alpha}]$.
	\end{itemize}
\end{propos}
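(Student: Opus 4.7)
The plan is to construct $(\rho_{q+1}, u_{q+1}, R_{q+1})$ as a high-frequency perturbation of a space-time mollification of $(\rho_q, u_q, R_q)$, adapting the convex integration framework of \cite{MoSz2019AnnPDE,MoSa2019} but with space-time building blocks whose density profile is \emph{pointwise nonnegative}. Fix a mollification length $\ell$, a time discretization scale $\mu^{-1}$, and a spatial concentration parameter $\lambda_{q+1}$, each to be tuned at the end as a power of $\lambda_{q+1}$. Mollify $(\rho_q, u_q, R_q)$ in space-time at scale $\ell$ to obtain smooth $(\rho_\ell, u_\ell, R_\ell)$ still satisfying \eqref{eqn:CE-R} with $\|R_\ell\|_{L^1}\leq \delta_{q+1}$ and derivative norms controlled by polynomial powers of $\ell^{-1}$; all further constructions are based on $(\rho_\ell, u_\ell, R_\ell)$.

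Next, introduce a smooth partition of unity $\{\chi_j(t)\}$ of $[0,T]$ at scale $\mu^{-1}$, as first done in \cite{DLSZ13}, to freeze both $R_\ell$ and the velocity direction on each piece. Then apply a geometric lemma, as in \cite{MoSz2019AnnPDE}, to write on $\supp\chi_j$
\[
R_\ell(t,x) = \sum_{k} a_{k,j}^2(t,x)\, e_k
\]
for a finite collection of unit vectors $e_k$ and smooth amplitudes $a_{k,j}$. For each pair $(k,j)$ pick a space-time direction $\xi_{k,j}=(\tau_{k,j},e_k)$ and a pair of profiles $(\Theta_{k,j}, W_{k,j})$ concentrated in a thin tube around a space-time line parallel to $\xi_{k,j}$, oscillating at frequency $\lambda_{q+1}$, with the properties: $\Theta_{k,j}\geq 0$ (a squared Mikado-type profile), $W_{k,j}$ is divergence-free with zero mean, the low-frequency part of $\Theta_{k,j}W_{k,j}$ equals $e_k$, and $\partial_t\Theta_{k,j}+\tau_{k,j}\,e_k\cdot\nabla(\Theta_{k,j}W_{k,j})=0$ at the building-block level. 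Define
\[
\rho_{q+1} := \rho_\ell + \sum_{k,j}\chi_j\, a_{k,j}\,\Theta_{k,j} + \rho^{c}_{q+1},
\qquad
u_{q+1} := u_\ell + \sum_{k,j}\chi_j\, a_{k,j}\,W_{k,j} + u^{c}_{q+1},
\]
with small correctors $(\rho^{c}_{q+1}, u^{c}_{q+1})$ produced via an antidivergence/Leray projection so that $u_{q+1}$ is divergence-free and $\|\rho^{c}_{q+1}\|_{L^\infty}=o(\delta_{q+1}^{1/p})$. Finally let $R_{q+1}$ solve $-\div R_{q+1}=\partial_t\rho_{q+1}+\div(\rho_{q+1}u_{q+1})$ by inverting the divergence on each high-frequency contribution with the standard operator $\mathcal R$ that gains a factor $\lambda_{q+1}^{-1}$.

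The verification then reduces to estimating four contributions to $R_{q+1}$: the interaction of distinct building blocks (which vanishes identically in $d\geq 3$ by disjointness of the tubes, obtained from a generic choice of directions), the interaction of each building block with the mollified $u_\ell$, the transport error produced by the time partition, and the mollification error $R_q-R_\ell$. Each is bounded in $L^1$ by combining the $L^p$, $L^{p'}$, and $W^{1,r}$ scaling of $(\Theta_{k,j},W_{k,j})$ with the gain of $\lambda_{q+1}^{-1}$ from $\mathcal R$; condition \eqref{hp:exp} is precisely what allows all four bounds to beat $\delta_{q+2}$ simultaneously for a suitable tuning of $\alpha$, $b$, $\beta$ and of the powers of $\lambda_{q+1}$ defining $\ell$ and $\mu$. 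Property (a) and \eqref{eqn:ie-2} at level $q+1$ follow from direct norm estimates on the building blocks; (c) holds because the perturbation is supported on $\{R_\ell\neq 0\}$, which enlarges $\{R_q\neq 0\}$ by at most an $\ell$-neighborhood with $\ell\ll \lambda_q^{-1-\alpha}$; and (b), the new ingredient with respect to \cite{MoSz2019AnnPDE,MoSa2019}, is a direct consequence of $\Theta_{k,j}\geq 0$ together with the smallness of $\rho^{c}_{q+1}$ and of the mollification error.

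The principal obstacle is the building-block construction itself: producing a pair $(\Theta,W)$ that is simultaneously nonnegative in the first component, divergence-free in the second, has the prescribed vector-valued low-frequency interaction, and respects the scaling allowed by \eqref{hp:exp} is more rigid than in \cite{MoSz2019AnnPDE,MoSa2019}, where the density was allowed to change sign. This rigidity forces the use of \emph{space-time} concentration (tubes around space-time lines, rather than purely spatial Mikados) and therefore of the temporal partition $\{\chi_j\}$, whose transport error must in turn be balanced by the choice of $\mu$. In $d=2$ the tubes cannot be made disjoint by a generic perturbation of their directions, and a combinatorial argument, deferred to Section~\ref{s:2d}, is required.
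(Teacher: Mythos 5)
Your skeleton (mollify at scale $\ell$, add traveling space--time blocks with a nonnegative density profile, gain $\lambda_{q+1}^{-1}$ through an antidivergence, deduce (b) from $\Theta\geq 0$ plus smallness of the mean-correcting term, deduce (c) from the support of the perturbation) matches the paper's, but the core of your ansatz has a genuine gap which the paper's construction is specifically designed to avoid. You modulate both $\Theta_{k,j}$ and $W_{k,j}$ by slowly varying amplitudes $a_{k,j}(t,x)$ extracted from $R_\ell$, after freezing $R_\ell$ only in \emph{time} through a partition of unity $\chi_j(t)$. For time-dependent, traveling blocks this destroys the exact relation $\partial_t\Theta+\div(W\Theta)=0$: on the support of $\Theta$ the transporting field becomes $a_{k,j}W$, while the profile $\Theta$ was built to travel at the speed dictated by $W$ itself, so the principal error contains a term of the schematic form $(1-a_{k,j})\,\partial_t\bigl[\Theta(\lambda_{q+1}t,\lambda_{q+1}x)\bigr]$, which carries a full factor $\lambda_{q+1}$ and admits no antidivergence gain; freezing in time does not help, because $a_{k,j}$ still varies in space. (With \emph{steady} Mikado profiles this term is absent, but then, as the paper notes, one only reaches $\frac1p+\frac1r>1+\frac1{d-1}$, not the range \eqref{hp:exp}.) Nor can you simply drop the amplitude from $W$: estimate (a) for $\|u_{q+1}-u_q\|_{L^{p'}}$ requires the velocity perturbation to be small in terms of $\|R_\ell\|_{L^1}$, so the size of $R_\ell$ must enter the intensity of the velocity block.

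The paper resolves precisely this tension by discretizing the \emph{values} of $|R_\ell|$ rather than time: the cutoffs $\chi(\kappa|R_\ell|-n)$, $\bar\chi(\kappa|R_\ell|-n)$ select the region where $|R_\ell|\approx n/\kappa$, and there one inserts blocks $(\Theta_{\xi,\mu_{q+1},n/\kappa},W_{\xi,\mu_{q+1},n/\kappa})$ whose intensity \emph{and} traveling speed are determined by the fixed number $n/\kappa$ (this is what the introduction means by a ``partition of unity to discretize the time velocities''; it is not a partition of $[0,T]$). The velocity block enters unmodulated ($\bar\chi\equiv 1$ on $\supp\chi$), only the density carries the directional coefficients $a_\xi(R_\ell/|R_\ell|)$ from Lemma \ref{lemma:geom}, and the cancellation of $R_\ell$ occurs against $\tilde R_\ell=\sum_n\chi(\kappa|R_\ell|-n)\frac{n}{\kappa}\frac{R_\ell}{|R_\ell|}$, with $|R_\ell-\tilde R_\ell|\lesssim\delta_{q+2}$; the remaining errors involve only slow derivatives of the cutoffs hitting $\Theta W$ and are absorbed via the gain $\lambda_{q+1}^{-1}$ of $\fR$ and the concentration parameter $\mu_{q+1}$. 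Two further devices tied to this discretization are missing from your outline: the two disjoint direction families $\Lambda^1,\Lambda^2$ attached to the parity of $n$, needed so that overlapping cutoffs for adjacent $n$ never mix different blocks, and the fact that in $d\geq3$ disjointness of the periodized tubes is achieved by \emph{translating} them (Lemma \ref{lemma:disjointsupports}) rather than by a generic choice of directions, which must remain rational. Without the value-discretization device your scheme either produces the untreatable fast time-derivative error above or loses estimate (a) for the velocity increment.
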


Compared to \cite{MoSz2019AnnPDE} we are using a slightly different notation and a more specific choice of the parameters. None of that is however substantial: the really relevant differences are in estimate (b) and in the range of exponents, which is the same as the one in \cite{MoSa2019}. In the same range of exponents of \cite{MoSz2019AnnPDE} the positivity could be achieved by a slight tweak in the approach of \cite{MoSz2019AnnPDE}. However we have not been able to find a similar modification of the arguments of \cite{MoSa2019}. For this reason our proof of Proposition \ref{prop:inductive} differs from both that of \cite{MoSz2019AnnPDE} and that of \cite{MoSa2019}. However we still make use of some crucial discoveries in \cite{MoSz2019AnnPDE} and we will refer to that paper for the proofs of some relevant lemmas.
From now on, in order to simplify our notation, for any function space $X$ and any map $f$ which depends on $t$ and $x$, we will write $\|f\|_X$ meaning $\max_t \|f (t, \cdot)\|_X$.

\section{Preliminary lemmas}

\subsection{Geometric lemma} We start with an elementary geometric fact, namely that every vector in $\mathbb R^d$ can be written as a ``positive'' linear combination of elements in a suitably chosen finite subset $\Lambda$ of $\mathbb{Q}^d\cap \partial B_1$. This is reminiscent of the geometric lemma in \cite{DLSZ13}. In both \cite{MoSz2019AnnPDE} and \cite{MoSa2019} the positivity of the coefficients is not needed and hence the authors can choose $\Lambda$ as the standard basis of $\mathbb R^d$. 

\begin{lemma}\label{lemma:geom}
	There exists a finite set $\{ \xi\}_{\xi \in \Lambda} \subseteq \partial B_1\cap \mathbb{Q}^d$ and smooth nonnegative coefficients $a_\xi(R)$ such that for every $R \in \partial B_1$ 
	\[
	R= \sum_{\xi \in \Lambda} a_\xi(R) \xi\, .
	\]
\end{lemma}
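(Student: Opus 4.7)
The plan is to build $\Lambda$ and the $a_\xi$ by combining a local decomposition into rational unit vectors with a partition of unity on $\partial B_1$.

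First I would recall the classical fact that for $d\geq 2$ the set $\mathbb{Q}^d \cap \partial B_1$ is dense in $\partial B_1$ (one can see this by stereographic projection from any rational point of the sphere, which maps rational points to rational points). Given an arbitrary $R_0 \in \partial B_1$, I would choose $d$ rational unit vectors $\xi_1,\dots,\xi_d$ sufficiently close to $R_0$ so that they are linearly independent and the open positive cone $C=\{\sum_j t_j \xi_j : t_j>0\}$ contains a relatively open neighborhood $V$ of $R_0$ in $\partial B_1$. On $V$ the coefficients in the unique decomposition $R=\sum_j c_j(R)\xi_j$ are strictly positive and smooth in $R$, being the restriction to the sphere of the linear map $R\mapsto (\xi_1\,|\,\cdots\,|\,\xi_d)^{-1}R$.

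By compactness of $\partial B_1$, I would extract a finite subcover $\{V_k\}_{k=1}^N$ with associated data $\{\xi_1^{(k)},\dots,\xi_d^{(k)}\}$ and $\{c_1^{(k)},\dots,c_d^{(k)}\}$ as above, and set
\[
\Lambda := \bigcup_{k=1}^N \{\xi_1^{(k)},\dots,\xi_d^{(k)}\}\subset \mathbb{Q}^d\cap \partial B_1,
\]
which is finite. Choosing a smooth partition of unity $\{\psi_k\}_{k=1}^N$ on $\partial B_1$ subordinate to $\{V_k\}$, I would define
\[
a_\xi(R) := \sum_{k=1}^N \psi_k(R)\!\!\sum_{\{j\,:\,\xi_j^{(k)}=\xi\}} c_j^{(k)}(R),
\]
extending each product $\psi_k c_j^{(k)}$ by $0$ outside $V_k$. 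The resulting $a_\xi$ are smooth and nonnegative, and one checks the reconstruction identity
\[
\sum_{\xi\in\Lambda} a_\xi(R)\,\xi = \sum_{k=1}^N \psi_k(R)\sum_{j=1}^d c_j^{(k)}(R)\,\xi_j^{(k)} = \sum_{k=1}^N \psi_k(R)\,R = R.
\]

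The only mildly delicate step is the local construction: arranging the rational vertices $\xi_j^{(k)}$ so that $V_k$ lies in the \emph{open} positive cone, not merely in the closed one. This is precisely where both the density of rational unit vectors and the openness of the set of strictly positive combinations of a fixed basis enter, and it is handled by choosing $V_k$ small enough relative to the spread of the $\xi_j^{(k)}$. Everything else is routine partition-of-unity gluing.
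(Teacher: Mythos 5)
Your proof is correct and follows essentially the same strategy as the paper's: a local decomposition of vectors near a point of $\partial B_1$ into positive combinations of nearby rational unit vectors (the paper phrases this via open simplices with rational vertices, you via open positive cones), followed by compactness and a smooth partition of unity on the sphere to glue the local coefficients. Your explicit summation over repeated occurrences of a vector $\xi$ in several local families is a minor bookkeeping refinement of the same argument.
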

\begin{proof} For each vector $v$ consider a collection $\Lambda (v)=\{\xi_1 (v), \ldots , \xi_d (v)\}\subset \partial B_1$ of linearly independent unit vectors in $\mathbb Q^d$ with the property that the $d$-dimensional open symplex $\Sigma (v)$ with vertices $0, 2\xi_1 (v), \ldots , 2\xi_d (v)$ contains $v$. Since $\{\Sigma (v): v\in \partial B_1\}$ is an open cover of $\partial B_1$, we consider a finite subcover and the corresponding collections $\Lambda_1 = \Lambda (v_1), \ldots , \Lambda (v_N)$, each one consisting of the $d$ unit vectors $\{ \xi_{j,1}, ... \xi_{j,d}\}$. We set 
	\[
	\Lambda = \bigcup_{j=1}^N \Lambda_j = \left\{ \xi_j,i: 1\leq j \leq N, 1 \leq i \leq d\right\}\,.
	\]
	For each fixed $j$ each vector $R\in \mathbb R^d$ can be written in a unique way as linear combination of the vectors $\xi_{j,1}, \ldots \xi_{j,d}$. If we denote by $b_{j,i} (R)$ the corresponding coefficients (which obviously depend linearly on $R$), then the latter are all strictly positive if $R$ belongs to $\Sigma (v_j)$.
	We consider a partition of unity $\chi_j$ on the unit sphere $\partial B_1$ associated to the cover $\{\Sigma (v_j)\}$ and for every $\xi_{j,i} = \xi \in \Lambda$ we set
	\[
	a_\xi (R) := \chi_j (R) b_{j,i} (R).
	\]
	The coefficients $a_\xi$ are then smooth nonnegative functions of $R$.
\end{proof}

\begin{remark}
With Lemma~\ref{lemma:geom} at hand, it is easy to generate a finite number of disjoint families 
$\Lambda^{(1)}, ..., \Lambda^{(k)}$ where each one enjoys the property of Lemma~\ref{lemma:geom}: it is enough to take suitable rational rotations of one fixed set $\Lambda$.
\end{remark}

\subsection{Antidivergences}
We recall that the operator $\nabla \Delta^{-1}$ is an antidivergence when applied to smooth vector fields of $0$ mean. As shown in \cite[Lemma 2.3]{MoSz2019AnnPDE} and \cite[Lemma 3.5]{MoSa2019}, however, the following lemma introduces an improved antidivergence operator, for functions with a particular structure.

\begin{lemma}\label{lemma23}(Cp. with \cite[Lemma 3.5]{MoSa2019})
	Let $\lambda \in \NN$ and $f, g : \TT \to \RR$ be smooth functions, and $g_\lambda= g(\lambda x)$. Assume that $\int g = 0$. Then if we set $\fR (f g_\lambda) = f \nabla \Delta^{-1} g_\lambda -\nabla \Delta^{-1} (\nabla f \cdot \nabla \Delta^{-1}g_\lambda+\int fg_\lambda)$, we have that $\div  \fR (f g_\lambda) = f g_\lambda-\int fg_\lambda$ and for some $C:=C({k,p})$
	\begin{equation}
	\label{ts:antidiv}
	\|D^k \fR (f g_\lambda)\|_{L^p} \leq C \lambda^{k-1} \|f\|_{C^{k+1}} \| g\|_{W^{k,p}} \qquad \mbox{for every } k\in \NN, p\in [1,\infty].
	\end{equation}
\end{lemma}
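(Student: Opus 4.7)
The plan is to verify the divergence identity by direct computation, and then obtain the $L^p$ bounds from a scaling identity combined with Calder\'on--Zygmund estimates.

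I would first check that the argument of the second $\nabla \Delta^{-1}$ has zero mean on $\TT$, so the operator is well defined. Integration by parts yields
$$\int_\TT \nabla f \cdot \nabla \Delta^{-1} g_\lambda \,dx = -\int_\TT f\,\Delta \Delta^{-1} g_\lambda \,dx = -\int_\TT f g_\lambda\,dx,$$
so $\nabla f \cdot \nabla \Delta^{-1} g_\lambda + \int fg_\lambda$ has mean zero. The identity for $\div \fR(fg_\lambda)$ then follows from the Leibniz computation $\div(f\,\nabla \Delta^{-1} g_\lambda) = fg_\lambda + \nabla f \cdot \nabla \Delta^{-1} g_\lambda$ together with the fact that $\div \nabla \Delta^{-1}$ is the identity on mean-zero functions.

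The decisive input for the estimate is the scaling
$$\nabla \Delta^{-1} g_\lambda(x) = \lambda^{-1}(\nabla \Delta^{-1} g)(\lambda x),$$
which is immediate from $\Delta[\lambda^{-2}(\Delta^{-1} g)(\lambda x)] = g_\lambda(x)$. Since $\lambda \in \NN$ makes $x \mapsto \lambda x$ an $L^p(\TT)$-isometry, combining this with the Calder\'on--Zygmund boundedness of $\nabla \Delta^{-1}$ on $L^p_0$ yields $\|D^m \nabla \Delta^{-1} g_\lambda\|_{L^p} \lesssim \lambda^{m-1}\|g\|_{W^{\max(m-1,0),p}}$. Applying Leibniz to $D^k(f\,\nabla \Delta^{-1} g_\lambda)$, and using $\lambda \ge 1$ together with monotonicity of Sobolev norms to dominate every term by the $j=0$ one, bounds the first piece of $\fR$ by $\|f\|_{C^k}\lambda^{k-1}\|g\|_{W^{k,p}}$. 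For the corrector I would exploit that $\nabla \Delta^{-1}$ gains one derivative, reducing the $k$-th derivative bound to a $W^{k-1,p}$ bound on the interior; the same Leibniz expansion then applies but places one extra derivative on $f$, producing the $\|f\|_{C^{k+1}}$ appearing in the statement together with a power of $\lambda$ no larger than $\lambda^{k-1}$. For $k=0$ the constant $\int fg_\lambda$ is controlled separately via the cancellation $\int fg_\lambda = -\lambda^{-1}\int \nabla f\cdot (\nabla \Delta^{-1} g)(\lambda x)\,dx$, which gives $|\int fg_\lambda| \lesssim \lambda^{-1}\|f\|_{C^1}\|g\|_{L^p}$.

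The only real subtlety is careful bookkeeping: several intermediate terms carry strictly smaller powers of $\lambda$ and weaker Sobolev norms of $g$, and one must absorb them systematically into the single target $\lambda^{k-1}\|g\|_{W^{k,p}}$. The Calder\'on--Zygmund step fails at $p\in\{1,\infty\}$, but since $C$ is allowed to depend on $p$ and all functions in sight are smooth, one may pass through an intermediate $L^q$ with $1<q<\infty$ at the cost of a worse, $\lambda$-independent constant. I do not foresee a genuine obstacle beyond this accounting.
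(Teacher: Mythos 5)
Your overall structure is sound (and, for what it is worth, genuinely different from the paper, which does not reprove the lemma but simply invokes \cite[Lemma 3.5]{MoSa2019} together with the remark on page 12 there): the verification that $\nabla f\cdot\nabla\Delta^{-1}g_\lambda+\int fg_\lambda$ has zero mean, the Leibniz computation of the divergence, the scaling identity $\nabla\Delta^{-1}g_\lambda(x)=\lambda^{-1}(\nabla\Delta^{-1}g)(\lambda x)$ together with the fact that $x\mapsto\lambda x$ preserves $L^p(\TT)$ norms for $\lambda\in\NN$, the extra derivative on $f$ coming from the corrector, and the cancellation $\int fg_\lambda=-\lambda^{-1}\int\nabla f\cdot(\nabla\Delta^{-1}g)(\lambda x)\,dx$ for $k=0$ are all correct.

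The genuine gap is your treatment of the endpoints $p\in\{1,\infty\}$, which are part of the statement and are actually used in the paper (the lemma is applied with $L^1$ bounds and with $W^{k,r}$ bounds where $r=1$ is allowed). Your proposed patch --- ``pass through an intermediate $L^q$, $1<q<\infty$'' --- does not work in either case: for $p=1$ you would end up with $\|g\|_{W^{k,q}}$ on the right, which is not controlled by $\|g\|_{W^{k,1}}$, and for $p=\infty$ you cannot dominate an $L^\infty$ norm of the output by any $L^q$ norm without invoking a Sobolev embedding that costs an extra derivative of $g$. The fix, however, is simple and makes the Calder\'on--Zygmund step unnecessary altogether: commute the derivatives with the nonlocal operator, writing $D^k\nabla\Delta^{-1}h=\nabla\Delta^{-1}(D^kh)$ (note $D^kh$ has zero mean, being either an exact derivative for $k\ge1$ or mean-zero by hypothesis for $k=0$), and then use only the bound $\|\nabla\Delta^{-1}w\|_{L^p}\le C\|w\|_{L^p}$, valid for \emph{every} $p\in[1,\infty]$ by Young's inequality, since $\nabla\Delta^{-1}$ is convolution with a kernel of size $|x|^{1-d}$, which is integrable on $\TT$. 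Applying this with $w=D^k(f\,\nabla\Delta^{-1}g_\lambda)$ resp.\ $w=D^k(\nabla f\cdot\nabla\Delta^{-1}g_\lambda)$, expanded by Leibniz exactly as you propose, yields $\|D^k\fR(fg_\lambda)\|_{L^p}\le C\lambda^{k-1}\|f\|_{C^{k+1}}\|g\|_{W^{k,p}}$ in the full range $p\in[1,\infty]$ (you lose the slight gain $\|g\|_{W^{k-1,p}}$ that the second-order Riesz transform would give for $1<p<\infty$, but the statement only asks for $\|g\|_{W^{k,p}}$). With this replacement your argument is complete.
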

\begin{proof}
	It is enough to combine \cite[Lemma 3.5]{MoSa2019} and the remark in \cite[page 12]{MoSa2019}.
\end{proof}

\subsection{Slow and fast variables}
Finally we recall the following improved H\"older inequality, stated as in \cite[Lemma 2.6]{MoSz2019AnnPDE} (see also \cite[Lemma 3.7]{BV}).
If $\lambda \in \NN$ and $f,g:\TT \to \RR$ are smooth functions, then we have 
\begin{equation}
\label{eqn:impr-holder}
\| f(x) g(\lambda x) \|_{L^p} \leq \| f \|_{L^p} \| g \|_{L^p} + \frac{C(p)\sqrt d \|f \|_{C^1} \|g\|_{L^p}}{\lambda^{1/p}}
\end{equation}
and 
\begin{equation}
\label{eqn:l26}
\Big| \int f(x) g(\lambda x) \, dx \Big| \leq\Big| \int f(x) \Big(g(\lambda x) - \int g \Big) \, dx \Big| + \Big| \int f \Big |\cdot \Big| \int g \Big| \leq \frac{\sqrt d \|f \|_{C^1} \|g\|_{L^1}}{\lambda} + \Big| \int f \Big| \cdot \Big | \int g \Big| .
\end{equation}

\section{Building blocks}\label{sec:build}
Let $0<\rho<\frac 14$ be a constant
. We consider $\varphi\in C^\infty_c (B_\rho)$ and $\psi\in C^\infty_c (B_{2\rho})$ which satisfy
	$$\int \varphi =1, \qquad \varphi \geq 0, \qquad
	\psi \equiv 1 \mbox{ on }B_\rho.$$
Given $\mu\ll1$ we define the $1$-periodic functions
\begin{align*}
\bar \varphi_\mu (x) &:= \sum_{k\in \mathbb Z^d} \mu^{d/p} \varphi (\mu (x+k))\\
\bar \psi_\mu (x) &:= \sum_{k\in \mathbb Z^d} \mu^{d/p'} \psi (\mu (x+k))\, .
\end{align*}
Let $\omega: \mathbb R^d \to \mathbb R$ be a  smooth $1$-periodic function such that $\omega(x)  = x\cdot \xi'$ on $B_{2\rho}(0)$.

Given $\Lambda$ as in Lemma~\ref{lemma:geom}, for any $\xi\in \Lambda$ we chose $\xi'\in \partial B_1$ such that $\xi\cdot \xi'=0$ and we define
\[
\Omega_{\xi}^\mu(x):= \mu^{-1} \omega(\mu\, x) ( \xi\otimes \xi' - \xi'\otimes \xi ).
\]
Notice that $\div \Omega^\mu_{\xi}$ is divergence free since $\Omega_{\xi}^\mu$ is skew-symmetric and $\div \Omega_{\xi}^\mu=\xi$ on $\supp (\bar \psi_\mu)$ and $\supp (\bar \varphi_{\mu})$.

For $\sigma>0$ we set 
\begin{align*}
\tilde W_{\xi, \mu, \sigma} (t,x) &:= \sigma^{1/p'}  \div \left[ (\Omega_{\xi}^\mu \bar \psi_\mu) (x- \mu^{d/p'} \sigma^{1/p'} t \xi )\right]\\
\tilde \Theta_{\xi, \mu, \sigma} (t,x) &:= \sigma^{1/p} \bar \varphi_\mu (x - \mu^{d/p'} \sigma^{1/p'} t \xi )\, .
\end{align*}
Notice that $\tilde W_{\xi, \mu, \sigma} $ is divergence free since it is also the divergence of the skew-symmetric matrix $\Omega_{\xi}^\mu \bar \psi_\mu$. By construction we have 
\[
\tilde W_{\xi, \mu, \sigma} (t,x)=\sigma^{1/p'}\left[ \bar \psi_{\mu} \xi + \Omega_{\xi}^{\mu} \cdot \nabla \bar \psi_{\mu}  \right](x-\mu^{d/p'} \sigma^{1/p'} t \xi ),
\]
hence the following properties are easily verified.

\begin{lemma}\label{lemma:build} We have
	\begin{equation}\label{eqn:itsolves}
	\partial_t \tilde\Theta_{\xi, \mu, \sigma}+\div(\tilde W_{\xi, \mu, \sigma} \tilde\Theta_{\xi, \mu, \sigma})=0,
	\end{equation}
 $$\div \tilde W_{\xi, \mu, \sigma}=0,$$
 \begin{equation}
 \label{eqn:avW}
 \int \tilde W_{\xi, \mu, \sigma}=0,
 \end{equation}
$\tilde W_{\xi, \mu, \sigma} \tilde\Theta_{\xi, \mu, \sigma}(t,x) = \sigma \mu^{d/p'}\bar{\varphi}_{\mu}(x-\mu^{d/p'}\sigma^{1/p'}t\xi)\xi$, in particular
	\begin{equation}\label{eqn:rightaverage}
	\int \tilde W_{\xi, \mu, \sigma} \tilde\Theta_{\xi, \mu, \sigma}=\sigma \xi \int \varphi= \sigma \xi.
	\end{equation}
For any $k\in \NN$ and any $s\in [1,\infty]$ one has
\begin{equation}\label{eqn:Thetanorms}
\| D^k \tilde\Theta_{\xi, \mu, \sigma} \|_{L^s}\le C(d,k,s) \sigma^{1/p} \mu^{k + d ( 1/p - 1/s )},
\qquad
\|\partial_t^k \tilde\Theta_{\xi, \mu, \sigma}\|_{L^s} \le C(d,k,s) \sigma^{1 + \frac{k - 1}{ p' }} \mu^{k + d ( \frac{ k - 1 }{ p' } + 1 - \frac{1}{s} ) }
\end{equation}

\begin{equation}\label{eqn:Wnorms}
\| D^k \tilde W_{\xi,\mu,\sigma} \|_{L^s} \le C(d,k,s) \sigma^{1/p'} \mu^{ k+ d(1/p'-1/s)},
\qquad
\| \partial_t^k  \tilde W_{\xi,\mu,\sigma} \|_{L^s}\le C(d,k,s) \sigma^{\frac{k + 1}{ p' }} \mu^{ k + d(\frac{k+1}{p'}-\frac{1}{s} ) }.
\end{equation}
Finally, $\supp \Theta_{\xi, \mu, \sigma} \cup \supp W_{\xi, \mu, \sigma} \subseteq \{ (x,t): x- \mu^{d/p'} \sigma^{1/p'} t \xi\in B_{2\rho \mu^{-1}} + \mathbb Z^d  \} $ and the support in space is contained in a periodized cylinder
\begin{equation}
\label{eqn:support}
\big\{x: \tilde W_{\xi, \mu, \sigma}(x,t) \neq 0 \mbox{ or }\tilde \Theta_{\xi, \mu, \sigma}(x,t) \neq 0 \mbox{ for some }t\geq 0\big \} \subseteq 
 B_{2\rho \mu^{-1}} + \mathbb R \xi + \mathbb Z^d
\end{equation}
\end{lemma}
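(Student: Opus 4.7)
The plan is to reduce the whole lemma to the pointwise identity
\[
\tilde W_{\xi,\mu,\sigma}(t,x) = \sigma^{1/p'}\bigl[\bar\psi_\mu\,\xi + \Omega_\xi^\mu\nabla\bar\psi_\mu\bigr]\bigl(x - \mu^{d/p'}\sigma^{1/p'}t\xi\bigr),
\]
already announced in the paragraph preceding the statement. To establish it, I would first verify that $\div\Omega_\xi^\mu=\xi$ on the support of $\bar\psi_\mu$: expanding $\partial_j[\mu^{-1}\omega(\mu x)](\xi_i\xi'_j-\xi'_i\xi_j)$ on $B_{2\rho/\mu}$, where $\omega(\mu x)=\mu x\cdot\xi'$, one finds $\xi_i|\xi'|^2-\xi'_i(\xi\cdot\xi')=\xi_i$ because $|\xi'|=1$ and $\xi\cdot\xi'=0$. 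Since $\mu\in\mathbb{N}$, the $1$-periodicity of $\omega$ transfers this identity to every periodic copy $B_{2\rho/\mu}(-k)$ on which a term of $\bar\psi_\mu$ lives. Expanding $\div(\Omega_\xi^\mu\bar\psi_\mu)$ via the Leibniz rule then gives the displayed formula for $\tilde W$. From this, $\div\tilde W=0$ follows at once from $\partial_i\partial_j A_{ij}=0$ for the skew-symmetric $A=\Omega_\xi^\mu\bar\psi_\mu$, while \eqref{eqn:avW} is just the vanishing of the integral of a divergence on the torus.

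Next, the product $\tilde W\tilde\Theta$ simplifies because $\supp\bar\varphi_\mu$ lies (modulo $\mathbb Z^d$) inside $B_{\rho/\mu}$, where $\psi\equiv 1$, so $\bar\psi_\mu\equiv\mu^{d/p'}$ and $\nabla\bar\psi_\mu=0$. Only the $\bar\psi_\mu\xi$ contribution survives in the product, producing $\tilde W\tilde\Theta=\sigma\mu^{d/p'}\bar\varphi_\mu(\,\cdot\,-\mu^{d/p'}\sigma^{1/p'}t\xi)\,\xi$; integration combined with $\int_{\TT}\bar\varphi_\mu=\mu^{d/p-d}$ and $1/p+1/p'=1$ yields $\int\tilde W\tilde\Theta=\sigma\xi$, giving \eqref{eqn:rightaverage}. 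The continuity equation \eqref{eqn:itsolves} is then immediate from the traveling-wave ansatz: the relations $\partial_t\tilde\Theta=-\mu^{d/p'}\sigma^{1/p'}\,\xi\cdot\nabla\tilde\Theta$ and $\div(\tilde W\tilde\Theta)=\mu^{d/p'}\sigma^{1/p'}\,\xi\cdot\nabla\tilde\Theta$ cancel exactly.

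The bounds \eqref{eqn:Thetanorms}--\eqref{eqn:Wnorms} are pure scaling. A change of variable $y=\mu x$ gives $\|D^k\bar\varphi_\mu\|_{L^s(\TT)}\lesssim\mu^{k+d(1/p-1/s)}$, and analogously for $\bar\psi_\mu$ with $p'$ in place of $p$. Since $\|D^j\Omega_\xi^\mu\|_{L^\infty}\lesssim\mu^{j-1}$, the Leibniz rule applied to $\Omega_\xi^\mu\nabla\bar\psi_\mu$ yields the same order $\mu^{k+d(1/p'-1/s)}$, so one recovers the spatial parts of \eqref{eqn:Thetanorms}--\eqref{eqn:Wnorms}. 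Time derivatives of a traveling wave of velocity $c:=\mu^{d/p'}\sigma^{1/p'}$ convert into $c^k$ times spatial derivatives, and rearranging $c^k$ into the exponents of $\sigma$ and $\mu$ produces exactly the stated estimates. The support statement \eqref{eqn:support} is then read directly off the definitions, since both $\tilde\Theta$ and $\tilde W$ are built from functions supported in the periodization of $B_{2\rho\mu^{-1}}$ shifted by $\mu^{d/p'}\sigma^{1/p'}t\xi$. The only non-routine point I expect is the compatibility of the $1$-periodicity of $\omega$ with the periodization of $\bar\psi_\mu$; checking it carefully (which forces $\mu\in\mathbb N$) is what guarantees that $\div\Omega_\xi^\mu=\xi$ holds on every periodic copy of $\supp\bar\psi_\mu$, hence that the clean formula for $\tilde W$, and with it the whole lemma, truly holds.
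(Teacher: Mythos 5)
Your verification is correct and takes essentially the same route as the paper, which itself only records the identity $\tilde W_{\xi,\mu,\sigma}(t,x)=\sigma^{1/p'}\big[\bar\psi_\mu\,\xi+\Omega_\xi^\mu\cdot\nabla\bar\psi_\mu\big](x-\mu^{d/p'}\sigma^{1/p'}t\xi)$ (resting on $\div\Omega_\xi^\mu=\xi$ on $\supp\bar\psi_\mu\cup\supp\bar\varphi_\mu$ and $\bar\psi_\mu\equiv\mu^{d/p'}$, $\nabla\bar\psi_\mu=0$ on $\supp\bar\varphi_\mu$) and declares the remaining properties easily verified — precisely the computations you carry out. Your side remark that the periodicity of $\omega(\mu\,\cdot)$ forces $\mu\in\NN$ is consistent with the paper's implicit conventions (the fast parameter is likewise taken in $\NN$ in Lemma \ref{lemma23} and in the improved H\"older inequality), so it is a legitimate clarification rather than a deviation.
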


In our construction $\xi$ will take values in a finite set of $\xi$'s, which will be fixed throughout the iteration, i.e. it is independent of the step $q$ in Proposition \ref{prop:inductive}. The parameter $\sigma$ will also vary in a finite set, but the cardinality of the latter will depend (and in fact diverge to infinity) on the iteration step $q$. In dimension $d\geq 3$ we 
consider suitable translations of $\tilde W_{\xi, \mu, \sigma}$ and $\tilde\Theta_{\xi, \mu, \sigma}$ which guarantee that, as $\xi$ varies in these fixed set of directions, pairs of $(\tilde{W}, \tilde{\Theta})$ with distinct $\xi$'s have disjoint supports. The precise statement is given in the following lemma.

\begin{lemma}\label{lemma:disjointsupports}
Let $d \geq 3$ and $\Lambda \subseteq \mathbb S^{d-1} \cap \mathbb Q$ be a finite number of vectors. Then there exists $\mu_0:= \mu_0 (d , \Lambda) >0$ and a family of vectors $\{v_\xi\}_{\xi \in \Lambda} \subseteq \RR^d$ such that the periodized cylinders 
$v_\xi+ B_{2\rho \mu^{-1}} + \mathbb R \xi + \mathbb Z^d
$ are disjoint as $\xi$ varies in $\Lambda$, provided $\mu \geq \mu_0$. 
\end{lemma}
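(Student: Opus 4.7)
My plan is to work on the torus $\mathbb T^d = \mathbb R^d / \mathbb Z^d$ and use that the rationality of the directions guarantees that each periodized line is a compact set; then a simple genericity/dimension-count argument available only when $d\ge 3$ lets us choose the translations.

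First, for each $\xi\in\Lambda$ the rationality assumption $\xi\in\mathbb S^{d-1}\cap\mathbb Q^d$ implies that there is an integer $N_\xi$ with $N_\xi\xi\in\mathbb Z^d$, so the projection $\gamma_\xi:=\pi(\mathbb R\xi)\subset\mathbb T^d$ of the line through the origin is a closed geodesic, in particular a compact subset of $\mathbb T^d$ of Hausdorff dimension one. For each $v\in\mathbb R^d$, disjointness in $\mathbb R^d$ of the two periodized lines $v_\xi+\mathbb R\xi+\mathbb Z^d$ and $v_\eta+\mathbb R\eta+\mathbb Z^d$ is equivalent to disjointness in $\mathbb T^d$ of the translated geodesics $\pi(v_\xi)+\gamma_\xi$ and $\pi(v_\eta)+\gamma_\eta$.

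The heart of the argument is the following genericity step. Fix $\xi\neq\eta$ in $\Lambda$. The set of translations $w\in\mathbb T^d$ for which $(w+\gamma_\eta)\cap\gamma_\xi\neq\emptyset$ is exactly the algebraic difference $\gamma_\xi-\gamma_\eta\subset\mathbb T^d$. Since $\gamma_\xi$ and $\gamma_\eta$ are each compact of Hausdorff dimension one and the difference map $(x,y)\mapsto x-y$ is Lipschitz, this bad set is compact of Hausdorff dimension at most $2$. Hence, precisely because $d\geq 3$, it has empty interior (and Lebesgue measure zero). We can therefore choose $v_\xi\in\mathbb R^d$, one $\xi$ at a time, avoiding the finitely many bad sets $\pi(v_\eta)+(\gamma_\xi-\gamma_\eta)$ coming from previously-chosen $v_\eta$'s; this produces a family $\{v_\xi\}_{\xi\in\Lambda}$ such that the closed geodesics $\pi(v_\xi)+\gamma_\xi$ are pairwise disjoint in $\mathbb T^d$.

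Once this is done, since $\Lambda$ is finite and the translated geodesics are pairwise disjoint compact sets in the compact space $\mathbb T^d$, there exists
\[
\delta:=\min_{\xi\neq\eta}\mathrm{dist}_{\mathbb T^d}\bigl(\pi(v_\xi)+\gamma_\xi,\ \pi(v_\eta)+\gamma_\eta\bigr)>0.
\]
Define $\mu_0:=\mu_0(d,\Lambda)$ so that $4\rho\mu_0^{-1}<\delta$. For every $\mu\geq\mu_0$ the $(2\rho\mu^{-1})$-tubular neighborhoods of these geodesics are still pairwise disjoint on $\mathbb T^d$, which is exactly the claim that the periodized cylinders $v_\xi+B_{2\rho\mu^{-1}}+\mathbb R\xi+\mathbb Z^d$ are pairwise disjoint in $\mathbb R^d$.

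The only genuine obstacle is the genericity step; everything else is bookkeeping. That step is precisely where the hypothesis $d\geq 3$ enters: in dimension two the bad set $\gamma_\xi-\gamma_\eta$ generically fills all of $\mathbb T^2$ (two non-parallel closed geodesics on $\mathbb T^2$ must intersect), which is why the two-dimensional case will require the separate combinatorial treatment postponed to Section \ref{s:2d}.
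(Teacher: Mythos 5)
Your proof is correct, and its overall architecture coincides with the paper's: reduce to making the periodized \emph{lines} pairwise disjoint by choosing the translations one at a time while avoiding a small ``bad'' set, then use rationality of the directions (compactness of the closed geodesics on $\TT^d$) to get a positive minimal distance $\delta$ and take $\mu_0$ with $4\rho\mu_0^{-1}<\delta$. The genuine difference lies in how the smallness of the bad set is justified. You argue softly: the bad translations form the difference set $\gamma_\xi-\gamma_\eta$ of two closed geodesics, a compact set of Hausdorff dimension at most $2$, hence Lebesgue-null and with empty interior as soon as $d\ge 3$; rationality enters only through the closedness/compactness of the geodesics. The paper instead proves the corresponding measure-zero statement \eqref{z3} by an explicit arithmetic construction: it exhibits admissible translations of the form $v+\alpha\xi+\beta\xi'+\gamma\xi''$ with $\xi''\in\mathbb Q^d$ orthogonal to $\xi,\xi'$ and $\gamma$ irrational, ruling out intersections via the contradiction $\gamma|\xi''|^2=k\cdot\xi''\in\mathbb Q$. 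Your dimension-count is arguably cleaner and makes transparent exactly where $d\ge 3$ is used (and why $d=2$ fails, as you note); the paper's version has the minor advantage of producing explicit admissible translations without invoking Hausdorff measure. You also spell out the final compactness/positive-distance step, which the paper leaves implicit in the phrase ``it is enough to find''. One cosmetic remark: the bad set for choosing $v_\xi$ given $v_\eta$ is $\pi(v_\eta)+\gamma_\eta-\gamma_\xi$ rather than $\pi(v_\eta)+\gamma_\xi-\gamma_\eta$, but since both geodesics pass through $0$ and are symmetric under $x\mapsto-x$ the two sets coincide, so nothing is affected.
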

\begin{proof}
	Set $\ell(v,\xi):=v+\RR \xi +\mathbb Z^d$. It is enough to find $\{v_{\xi}\}_{\xi\in \Lambda}\subset \RR^d$ such that $\ell(v_{\xi},\xi)\cap \ell(v_{\xi'},\xi')=\emptyset$ whenever $\xi\neq \xi'$. This claim follows from a simple induction argument along with the observation:
	\begin{equation}\label{z3}
		\Leb d (\RR^d \setminus \{ v'\in \RR^d : \, \ell(v,\xi)\cap \ell(v',\xi')=\emptyset \})=0
		\quad \text{for any $v\in \RR^d$ and any $\xi, \xi'\in \mathbb S^{d-1}\cap \mathbb{Q}^d$, $\xi\neq \xi'$.}
	\end{equation}
	To verify \eqref{z3} we notice that $\ell(v,\xi)\cap \ell(v',\xi')=\emptyset$ if and only if for every $s,t\in \RR$, $k\in \mathbb{Z}^d$ the inequality $v'-v\neq t\xi-s\xi'+k$ holds. In particular any $v'=v+\alpha \xi+\beta \xi'+ \gamma \xi''$ with $\alpha, \beta\in \RR$, $\gamma\in \RR\setminus \mathbb{Q}$ and $\xi''\in \mathbb{Q}^d\setminus \{ 0 \}$ orthogonal to $\xi$ and $\xi'$, has this property. Indeed, if we assume by contradiction the existence of $s,t,k,\alpha,\beta,\gamma,\xi''$ as above such that $\alpha \xi + \beta \xi'+\gamma \xi''= t\xi-s\xi'+k$ we get  $\gamma |\xi''|^2=k\cdot \xi''\in \mathbb{Q}$ that contradicts $\gamma\in \RR\setminus \mathbb{Q}$ and $\xi''\in \mathbb{Q}^d\setminus \{ 0 \}$.
\end{proof}
By the previous lemma and by \eqref{eqn:support} we notice that, if we consider the translations of 
$$
W_{\xi, \mu, \sigma}(t, x) =\tilde W_{\xi, \mu, \sigma}(t, x-v_\xi), \qquad
\tilde\Theta_{\xi, \mu, \sigma} (t,x) = \tilde\Theta_{\xi, \mu, \sigma}(t, x-v_\xi),$$
for $\xi$ in a suitable finite set of directions, these functions satisfy the same properties as in Lemma~\ref{lemma:build} (with the exception of the description of the support, which is now translated) and moreover they have disjoint support for $\mu$ sufficiently large and for every $\sigma$. Notice finally that in fact both $\mu$ and $\sigma$ could vary for different $\xi$ and the supports would still remain disjoint, as long as $\mu (\xi)$ is larger than $\mu_0$ for every $\xi$. 

The latter approach is clearly not feasible in dimension $d= 2$. In that case we will need to take advantage of the discreteness in the parameter $\sigma$ as well. As already mentioned this is more delicate, since the set of values taken by $\sigma$ depends on the step $q$. At each step $q$ we need to choose rather carefully the set of parameters $\sigma$ which enter the construction: for distinct values of $\sigma$ we need to ensure that their ratio is not too close to $1$, compared to the size of $\mu^{-1}$. The relevant statements depend thus on how the building blocks enter in the definition of the maps $(u_{q+1},  \rho_{q+1}, R_{q+1})$. For this reason we detail next the definition of the maps when $d\geq 3$ and show first how to prove Proposition \ref{prop:inductive} in that case. We then give a detailed description on how to modify the arguments to handle the case $d=2$.

\section{Iteration scheme}

\subsection{Choice of the parameters}\label{sec:param}
We define first the constant
\[
\gamma:= \Big(1+\frac 1p\Big) \left(\min \Big\{ \frac{d}{p}, \frac{d}{p'}, -1-d  \Big(\frac1{p'}-\frac1r\Big)\Big\}\right)^{-1}>0,
\]
where we have used crucially
\[
-1 - d \Big(\frac1{p'}-\frac1r\Big)
= d \left(\frac{1}{p}+\frac{1}{r} -1 - \frac{1}{d}\right) > 0\, .
\]
Notice that, up to enlarging $r$, we can assume that the quantity in the previous line is less than $1/2$, namely that $\gamma>2$.
Hence we set $\alpha:=4 + \gamma (d+1)$,
\begin{equation}
\label{eqn:choice-b}
b := \max\{ p, p'\} (3(1+\alpha)(d+2)+2),
\end{equation}
and 
\begin{equation}
\label{eqn:choice-beta}
\beta:= \frac 1{2 b} \min\Big\{ p, p', r, \frac{1}{b+1}\Big\}= \frac{1}{2 b(b+1)} .
\end{equation}
Finally, we choose $a_0$ and $M$ sufficiently large (possibly depending on all previously fixed parameters) to absorb numerical constants in the inequalities.
We set
\begin{equation}
\label{eqn:choice-ell}
\ell: = \lambda_{q}^{-1-\alpha},
\end{equation}
\begin{equation}
\label{eqn:choice-mu}\mu_{q+1} := \lambda_{q+1}^{\gamma}.
\end{equation}

\subsection{Convolution}\label{sec:convol}
We first perform a convolution of $\rho_q$ and $u_q$ to have estimates on more than one derivative of these objects and of the corresponding error. Let $\phi \in C^\infty_c(B_1)$ be a standard convolution kernel in space-time, $\ell$ as in \eqref{eqn:choice-ell} and define 
$$\rho_\ell := \rho_q \ast \phi_\ell, \qquad u_\ell := u_q \ast \phi_\ell, \qquad R_\ell := R_q \ast \phi_\ell. $$
We observe that $(\rho_\ell, u_\ell, R_\ell+ (\rho_q u_q)_\ell - \rho_\ell u_\ell)$ solves system \eqref{eqn:CE-R} 
and  by \eqref{eqn:ie-1}, \eqref{eqn:choice-beta} enjoys the following estimates

\begin{equation}
\label{eqn:r-l-l1}
\|R_\ell \|_{L^1} \leq \delta_{q+1},
\end{equation}
$$\| \rho_\ell - \rho_q\|_{L^p} \leq C \ell \|\rho_q\|_{C^1} \leq C \ell \lambda_q^\alpha C \leq C \delta_{q+1}^{1/p},$$
$$\| u_\ell - u_q\|_{L^{p'}} \leq C \ell \lambda_q^\alpha \leq C \delta_{q+1}^{1/{p'}},$$
$$\| u_\ell - u_q\|_{W^{1,r}} \leq C \ell \lambda_q^\alpha \leq C \delta_{q+1}^{1/{r}}\, .$$
Indeed note that by \eqref{eqn:choice-beta}
\[
\ell \lambda_q^\alpha = \lambda_q^{-1} = \delta_{q+1}^{\frac{1}{2b\beta}} \leq
\delta_{q+1}^{\max\{1/p, 1/p', 1/r\}}.
\]
Next observe that
\[
\|\partial_t^N \rho_\ell\|_{C^0} + \| \rho_\ell\|_{C^N} +\| u_\ell\|_{W^{1+N,r}} + \|\partial_t^N u_\ell\|_{W^{1,r}} 
\leq  C(N) \ell^{-N+1}( \| \rho_q\|_{C^1} +\| u_q\|_{W^{2,r}} 
)\leq  C(N) \ell^{-N+1}\lambda_{q}^\alpha
\]
for every $N\in \NN\setminus\{0\}$.
Using the Sobolev embedding $W^{d,r}\subset {W^{d,1}\subset }C^0$ we then conclude
\[
\|\partial_t^N u_\ell\|_{C^0} + \|u_\ell\|_{C^N} \leq C (N) \ell^{-N-d+2} \lambda_q^\alpha\, .
\]
By Young's inequality we estimate the higher derivatives of $R_\ell$ in terms of $\|R_q \|_{L^1}$ to get
\begin{equation}\label{e:D_tR}
\| R_\ell\|_{C^N} +\|\partial_t^N R_\ell\|_{C^0} 
\leq \|D^N \rho_\ell \|_{L^\infty} \|R_q \|_{L^1} \leq  C(N) \ell^{-N-d} \leq  C(N) \lambda_{q}^{(1+\alpha)(d+N)}\, 
\end{equation}
for every $N\in \NN$. 
Finally, for the last part of the error we show below that
\begin{equation}\label{e:commutatore}
\|(\rho_q u_q)_\ell - \rho_\ell u_\ell\|_{L^1} \leq C \ell^2 \lambda_q^{2\alpha} \leq \frac{1}{4} \delta_{q+2} ,
\end{equation}
(where we have assumed that $a$ is sufficiently large). The claim follows a well-known bilinear trick used often and originating (at least in the context of fluid dynamics) from the proof of Constantin, E and Titi of the positive part of the Onsager conjecture. We include a proof for the reader's convenience.

\begin{lemma} Consider a mollification kernel $\phi$ compactly supported in time and space. Then there is a constant $C = C(\phi)$ such that for every smooth functions $u$ and $\rho$ depending on time and space
	\begin{equation}
	\|(u\rho)*\phi_\ell - u*\phi_\ell\, \rho*\phi_\ell\|_{L^1} \leq C \ell^2\left(\|\partial_t u\|_{L^1} + \|D u\|_{L^1}\right) \left(\|D\rho\|_{C^0}+\|\partial_t \rho\|_{C^0}\right)\,.
	\end{equation} 
\end{lemma}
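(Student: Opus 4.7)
The plan is to use the classical Constantin--E--Titi commutator identity. For smooth $f, g$ on space-time and a kernel $\phi$ with $\int \phi =1$, one verifies directly by expanding that
\[
(fg)*\phi(x,t) - (f*\phi)(x,t)\,(g*\phi)(x,t) = \int \phi(y,s)\bigl[f(x-y,t-s)-f(x,t)\bigr]\bigl[g(x-y,t-s)-g(x,t)\bigr]\,dy\,ds - \bigl[f*\phi(x,t)-f(x,t)\bigr]\bigl[g*\phi(x,t)-g(x,t)\bigr].
\]
I would apply this with $f=u$, $g=\rho$ and $\phi=\phi_\ell$, whose support is contained in the space-time ball of radius $\ell$, and then estimate the two terms separately.

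For the first (bilinear) term, the idea is to put the absolute value inside and exploit the natural asymmetry: the $L^1$-in-$(x,t)$ integral falls on $u$, while $\rho$ is controlled uniformly. Concretely, for $(y,s)\in \supp \phi_\ell$, the fundamental theorem of calculus gives
\[
|\rho(x-y,t-s)-\rho(x,t)| \le \ell\bigl(\|D\rho\|_{C^0}+\|\partial_t\rho\|_{C^0}\bigr),
\]
while integration in $(x,t)$ combined with FTC and a change of variables yields
\[
\int |u(x-y,t-s)-u(x,t)|\,dx\,dt \le \ell\bigl(\|Du\|_{L^1}+\|\partial_t u\|_{L^1}\bigr).
\]
Together with $\int \phi_\ell=1$ this bounds the first contribution by $\ell^2 (\|Du\|_{L^1}+\|\partial_t u\|_{L^1})(\|D\rho\|_{C^0}+\|\partial_t\rho\|_{C^0})$.

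For the second term I would bound the two factors separately in the correct norms: $\|u*\phi_\ell-u\|_{L^1}\le \ell(\|Du\|_{L^1}+\|\partial_t u\|_{L^1})$ (again by FTC and Fubini) and $\|\rho*\phi_\ell-\rho\|_{C^0}\le \ell(\|D\rho\|_{C^0}+\|\partial_t\rho\|_{C^0})$, and then use $\|FG\|_{L^1}\le \|F\|_{L^1}\|G\|_{C^0}$. Summing the two contributions gives the desired inequality. No step here is subtle; the only thing to be careful about is the asymmetric placement of the $L^1$ and $C^0$ norms, which is exactly what the commutator identity is designed to exploit (and which is what makes it possible to replace the usual symmetric $C^1$-bilinear estimate by one that only demands $L^1$-integrability of a derivative of $u$).
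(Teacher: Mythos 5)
Your proof is correct and follows essentially the same route as the paper's: a Constantin--E--Titi commutator argument that places the $L^1$ norm on the increments of $u$ and the $C^0$ norm on the increments of $\rho$, each of which is $O(\ell)$ by the fundamental theorem of calculus in space-time. The only cosmetic difference is the algebraic identity: the paper writes the commutator as the single symmetric double-mollification integral $\tfrac12\iint (u(z-z')-u(z-z''))(\rho(z-z')-\rho(z-z''))\phi_\ell(z')\phi_\ell(z'')\,dz'\,dz''$, whereas you use the classical two-term CET decomposition (double-difference term plus product of mollification errors); the ensuing estimates are identical.
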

\begin{proof} To simplify our notation we introduce the variable $z=(x,t)$ and set $\Sigma:= (u\rho)*\phi_\ell - u*\phi_\ell\, \rho*\phi_\ell$. Assume without loss of generality that $\supp (\phi) \subset B^{d+1}_1\subseteq \RR^{d+1}$. 
	Simple computations lead to the formula
	\[
	\Sigma (z) := \frac{1}{2} \int\int (u (z-z') - u (z-z'')) (\rho (z-z') - \rho (z-z''))\phi_\ell (z')\phi_\ell (z'')\, dz' \, dz''\, ,
	\]
	which in turn implies
	\[
	|\Sigma (z)| \leq C \ell^{-2d-1} \|D_z\rho\|_{C^0} \int_{B^{d+1}_\ell}\int_{B^{d+1}_\ell}
	|u (z-z') - u (z-z'')|\, dz' \, dz''\, .
	\]
	Using $| u (z-z') - u (z-z'') |\leq |u (z-z') - u (z)| + |u (z) - u (z-z'')|$
	and integrating in the space variable $x$ we reach
	\begin{equation}\label{e:aggiunta}
	\|\Sigma (t, \cdot)\|_{L^1}\le C \ell^{-d} \|D_z\rho\|_{C^0} \int_{B^{d+1}_\ell} \int |u(t-t',x-x')-u(t,x)| \, dx\, dt'\, dx'\, .
	\end{equation}
	We then use $|u(t-t',x-x')-u(t,x)|\le |u(t-t',x-x')-u(t-t',x)|+|u(t-t',x)-u(t,x)|$ and
	estimate separately
	\begin{equation*}
	\int_{B^{d+1}_\ell} \int |u(t-t',x-x')-u(t-t',x)| \, dx\, dt'\, dx'
	\le C \ell^{d+2} \| Du\|_{L^1}
	\end{equation*}
	and
	\begin{equation*}
	\int_{B^{d+1}_\ell} \int |u(t-t',x)-u(t,x)| \, dx\, dt'\, dx'
	\le C \ell^{d+2} \| \partial_t u\|_{L^1}
	\end{equation*}
	Combining these last estimates with \eqref{e:aggiunta} we infer the desired conclusion. 
\end{proof}

\subsection{Definition of the perturbations}
\label{subsection:definitionofperturbations}
Let $\mu_{q+1}>0$ be as in \eqref{eqn:choice-mu} and let $\chi\in C^\infty_c (-\frac{3}{4}, \frac{3}{4})$ such that $\sum_{n \in \mathbb Z} \chi (\tau - n) =1$ for every $\tau\in \mathbb R$.
Let $\bar \chi\in C^{\infty}_c(-\frac{4}{5}, \frac{4}{5})$ be a nonnegative function satisfying $\bar \chi =1$ on $[-\frac{3}{4}, \frac{3}{4}]$.
Notice that $\sum_{n\in \mathbb{Z}} \bar \chi (\tau-n)\in [1,2]$ and  $\chi\cdot \bar \chi= \chi$.

Fix a parameter $\kappa=\frac{20
}{\delta_{q+2}}$ and consider two disjoint sets $\Lambda^1$, $\Lambda^2$ as in Lemma~\ref{lemma:geom}. Next, define $[i]$ to be $1$ or $2$ depending on the conrguence class of $i$. Finally, consider the building blocks introduced in Section~\ref{sec:build} in such a way that, for $\xi \in \cup_{i=1}^2\Lambda^i$, their spatial supports are disjoint.
We define the new density and vector field by adding to $\rho_\ell$ and $u_\ell$ a principal term and a smaller corrector, namely we set
\begin{align*}
\rho_{q+1} &:= \rho_\ell+ \theta_{q+1}^{(p)}+ \theta_{q+1}^{(c)}\,,\\ 
u_{q+1}  &:= u_\ell+ w_{q+1}^{(p)}+ w_{q+1}^{(c)}\, .
\end{align*}
The principal perturbations are given, respectively, by
\begin{align}
\label{eqn:theta}
w^{(p)}_{q+1} (t,x) &= \sum_{n \geq \nop}\bar  \chi (\kappa |R_\ell (t,x)| -n) \sum_{\xi\in \Lambda^{[n]}} W_{\xi , \mu_{q+1}, n /\kappa} (\lambda_{q+1} t, \lambda_{q+1} x),  \\ 
\label{eqn:w}
\theta^{(p)}_{q+1} (t,x) &= \sum_{n \ge \nop} \chi (\kappa |R_\ell (t,x)| - n) \sum_{\xi\in \Lambda^{[n]}}  a_{\xi}\left(\frac{R_\ell(t,x)}{|R_\ell(t,x)|}\right) \Theta_{\xi, \mu_{q+1}, n /\kappa} (\lambda_{q+1} t, \lambda_{q+1} x)\, ,
\end{align} 
where we understand that the terms in the second sum are all $0$ at points where $R_\ell$ vanishes. 
	In the definition of $w^{(p)}$ and $\theta^{(p)}$ the first sum runs for $n$ in the range 
	\begin{equation}
	\label{remark:sum is finite}
		12\leq n \leq \no C\ell^{-d}\delta_{q+2}^{-1} \le C \lambda_{q}^{d(1+\alpha) + 2\beta b^2}
	\le C \lambda_{q}^{d(1+\alpha)+1}.
	\end{equation}
	Indeed $\chi(\kappa |R_\ell(t,x)|-n)=0$ if $n\ge 20
	\delta_{q+2}^{-1}\|R_\ell\|_{C^0}+1$ and by \eqref{e:D_tR} we obtain an upper bound for $n$.

The aim of the corrector term for the density is to ensure that the overall addition has zero average:
\[
\theta^{(c)}_{q+1}:=-\int \theta_{q+1}^{(p)}(t,x)\, dx,
\]
The aim of the corrector term for the vector field is to ensure that the overall perturbation has zero divergence. Thanks to \eqref{eqn:avW}, we can apply Lemma~\ref{lemma23} to define

\[
w_{q+1}^{(c)}:=-\sum_{n \ge \nop} \sum_{\xi \in \Lambda^{[n]}} \mathcal{R} 
\left[ \nabla \bar \chi(\kappa |R_\ell(t,x)|-n) \cdot W_{\xi,\mu_{q+1},n/\kappa}(\lambda_{q+1} t, \lambda_{q+1} x) \right]
\]
Moreover, since $W_{\xi,\mu_{q+1},n/\kappa}$ is divergence-free, the argument inside $\mathcal{R}$ has $0$ average for every $t\geq 0$.

Notice finally that the perturbation equals $0$ on every time slice where $R_\ell$ vanishes identically.

\section{Proof of the Proposition~\ref{prop:inductive} in the case $d\geq 3$}

Before coming to the main arguments, we record some straightforward estimates for the ``slowly varying coefficients''.

\begin{lemma}\label{l:uglylemma}
	For $m\in \mathbb N$, $N\in \mathbb N\setminus \{0\}$ and $n\ge 2$ we have
	\begin{align}
	&\|\partial^m_t \chi (\kappa|R_\ell|-n)\|_{C^N}+\|\partial^m_t\bar \chi (\kappa|R_\ell|-n)\|_{C^N} \leq C (m, N) \delta_{q+2}^{-2(N+m)} \ell^{-(N+m)(1+d)} 
	\leq 
	C(m,N) \lambda_q^{(N+m) (d+2) (1+\alpha)}\\
	&\|\partial^m_t (a_\xi ({\textstyle{\frac{R_\ell}{|R_\ell|}}}))\|_{C^N}\leq C (m, N) \delta_{q+2}^{-N-m} \ell^{-(N+m)(1+d)} 
	\leq C(m,N)\lambda_q^{(N+m) (d+2) (1+\alpha)} \qquad\mbox{on $\{\chi (\kappa|R_\ell|-n) >0\}$}.
	\end{align}
\end{lemma}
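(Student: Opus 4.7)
The proof is a two-step application of Fa\`{a} di Bruno's formula, keyed on the fact that $|R_\ell|$ is bounded below on the supports of the cutoffs. Since $\supp \chi \subset (-3/4, 3/4)$, $\supp\bar\chi \subset (-4/5, 4/5)$ and $n \geq 2$, on $\{\chi(\kappa|R_\ell|-n) \neq 0\} \cup \{\bar\chi(\kappa|R_\ell|-n) \neq 0\}$ one has $|R_\ell(t,x)| \geq (n - 4/5)/\kappa \geq 6/(5\kappa)$. Combined with the mollification bounds $\|\partial^\gamma R_\ell\|_{C^0} \leq C(|\gamma|)\,\ell^{-|\gamma|-d}$ for every space-time multi-index $\gamma$ (which follow from \eqref{eqn:ie-1} and the fact that $\phi$ is a space-time kernel, extending the pure-space and pure-time bounds of \eqref{e:D_tR}; see Section~\ref{sec:convol}), this is the only extra input needed.

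First I would estimate derivatives of $g := |R_\ell|$ and of the unit vector $R_\ell/|R_\ell|$ on the support. Writing $g = (R_\ell \cdot R_\ell)^{1/2}$ and applying Fa\`{a} di Bruno to the outer map $s\mapsto s^{1/2}$ (whose $k$-th derivative is $O(s^{1/2-k})$) together with the Leibniz expansion of derivatives of $R_\ell\cdot R_\ell$, one obtains, after replacing each factor of $1/|R_\ell|$ on the support by $\kappa$,
\[
\bigl|\partial^\gamma|R_\ell|(t,x)\bigr| \leq C(|\gamma|)\,\kappa^{|\gamma|-1}\,\ell^{-|\gamma|(1+d)}
\qquad \text{on the support.}
\]
An essentially identical computation for $R_\ell/|R_\ell|$, whose quotient rule produces one additional factor of $1/|R_\ell| \leq \kappa$, yields $\bigl|\partial^\gamma(R_\ell/|R_\ell|)(t,x)\bigr| \leq C(|\gamma|)\,\kappa^{|\gamma|}\,\ell^{-|\gamma|(1+d)}$ on the support.

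Second, I would apply Fa\`{a} di Bruno a second time to $\chi(\kappa|R_\ell| - n)$, $\bar\chi(\kappa|R_\ell|-n)$, and $a_\xi(R_\ell/|R_\ell|)$. Each block $B$ in the Fa\`{a} di Bruno sum contributes $|\partial^{|B|}(\kappa|R_\ell|)| \leq C\kappa^{|B|}\ell^{-|B|(1+d)}$; since $\chi,\bar\chi$ and $a_\xi$ have uniformly bounded derivatives, summing over partitions of order $m+N$ gives
\[
|\partial_t^m D_x^N \chi(\kappa|R_\ell|-n)| \leq C(m,N)\,\kappa^{m+N}\,\ell^{-(m+N)(1+d)}
\qquad \text{on the support,}
\]
and analogously for $\bar\chi$ and for $a_\xi(R_\ell/|R_\ell|)$. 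Since $\kappa = 20/\delta_{q+2} \geq 1$, the bound $\kappa^{m+N}$ is dominated by $C\delta_{q+2}^{-2(m+N)}$ (matching what is stated for $\chi,\bar\chi$) and is also $\leq C\delta_{q+2}^{-(m+N)}$ (matching the $a_\xi$ bound). The $\lambda_q$-form of each inequality then follows from $\ell = \lambda_q^{-1-\alpha}$, the identity $\delta_{q+2}^{-1} = \lambda_q^{2\beta b^2} \leq \lambda_q$ (using $\beta = 1/(2b(b+1))$ from \eqref{eqn:choice-beta}, so $2\beta b^2 = b/(b+1) < 1$), and the slack $\alpha \geq 1$, which gives $2 + (1+\alpha)(1+d) \leq (d+2)(1+\alpha)$.

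The main technical nuisance is the bookkeeping in the square-root Fa\`{a} di Bruno expansion: the outer $f^{(k)}(s) = O(s^{1/2-k})$ produces $1/|R_\ell|^{2k-1}$ factors, but pairing them against copies of $|R_\ell|$ in the numerator from terms like $R_\ell \cdot \partial^\beta R_\ell$ leaves only $|\gamma|-1$ effective negative powers. Because the stated exponents leave ample slack (a factor of $2$ in the $\kappa$-power of the first bound, and a full $(d+2)(1+\alpha)$ in the $\lambda_q$-power), the cruder uniform replacement $1/|R_\ell| \leq \kappa$ at every occurrence already delivers the inequalities and no sharper accounting is required.
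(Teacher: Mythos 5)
Your proposal is correct in substance and follows exactly the argument the paper has in mind (the paper states Lemma \ref{l:uglylemma} without proof, as a ``straightforward'' chain-rule estimate): lower bound $|R_\ell|\ge c/\kappa$ on the relevant supports from $n\ge 2$ and $\supp\chi,\supp\bar\chi\subset(-4/5,4/5)$, mollification bounds $\|\partial^\gamma R_\ell\|_{C^0}\le C\ell^{-|\gamma|-d}$ from \eqref{eqn:ie-1}, Fa\`a di Bruno twice, and the parameter arithmetic $\delta_{q+2}^{-1}=\lambda_q^{2\beta b^2}\le\lambda_q$, $\ell^{-1}=\lambda_q^{1+\alpha}$, $\alpha\ge 1$. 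The displayed intermediate bounds $|\partial^\gamma|R_\ell||\le C\kappa^{|\gamma|-1}\ell^{-|\gamma|(1+d)}$ and $|\partial^\gamma(R_\ell/|R_\ell|)|\le C\kappa^{|\gamma|}\ell^{-|\gamma|(1+d)}$ on the supports are correct, and they do yield the lemma.

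One remark in your last paragraph is, however, wrong and should be deleted: the claim that ``the cruder uniform replacement $1/|R_\ell|\le\kappa$ at every occurrence already delivers the inequalities and no sharper accounting is required.'' The pairing you describe is not merely cosmetic bookkeeping of $\kappa$-powers: an \emph{unpaired} numerator factor $R_\ell$ must be bounded by $\|R_\ell\|_{C^0}\le C\ell^{-d}$ (equivalently by $Cn/\kappa$ on the support, with $n$ as large as $C\lambda_q^{d(1+\alpha)+1}$), so the crude version of the first step gives $|\partial^\gamma|R_\ell||\le C\kappa^{2|\gamma|-1}\ell^{-|\gamma|(1+2d)}$ and hence $C\kappa^{2(N+m)}\ell^{-(N+m)(1+2d)}$ for the cutoffs. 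The extra factor $\ell^{-(N+m)d}=\lambda_q^{(N+m)d(1+\alpha)}$ is not absorbed by the stated bounds: one would need $2+(1+\alpha)(1+2d)\le(d+2)(1+\alpha)$, i.e. $2\le(1+\alpha)(1-d)$, which fails for every $d\ge2$ no matter how large $\alpha$ is, and the exponent $(N+m)(d+2)(1+\alpha)$ is exactly what the subsequent estimates (e.g. \eqref{eqn:theta-pert-p}, via the choice of $b$ in \eqref{eqn:choice-b}) rely on. So either keep the pairing of each numerator $R_\ell$ against one power of $1/|R_\ell|$ (as your displayed bounds implicitly do), or equivalently write the derivatives of $|R_\ell|$ and $R_\ell/|R_\ell|$ as polynomials in $\partial^\mu R_\ell$ ($|\mu|\ge1$), the unit vector $R_\ell/|R_\ell|$, and at most $|\gamma|-1$ (resp. $|\gamma|$) powers of $1/|R_\ell|$; the genuinely crude route does not close.
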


\subsection{Estimate on $\|\theta_{q+1}\|_{L^p}$ and on $\inf_\TT \theta_{q+1}$}
We apply the improved H\"older inequality of \eqref{eqn:impr-holder},  Lemma \ref{l:uglylemma} and \eqref{remark:sum is finite} to get
\begin{equation}\label{eqn:theta-pert-p}
\begin{split}
\|\theta_{q+1}^{(p)}\|_{L^p} &\leq \sum_{n\ge \nop} \sum_{\xi\in \Lambda^{[n]}} \| \chi (\kappa |R_\ell (t,x)| - n)  \textstyle{a_{\xi}\left(\frac{R_\ell(t,x)}{|R_\ell(t,x)|}\right)} \|_{L^p} \|\Theta_{\xi, \mu_{q+1}, n /\kappa} (\lambda_{q+1} t, \lambda_{q+1} x) \|_{L^p}\\
\quad +& \frac{1}{\lambda_{q+1}^{1/p}} \sum_{n\ge \nop} \sum_{\xi\in \Lambda^{[n]}}  \| \chi (\kappa |R_\ell (t,x)| - n)  \textstyle{a_{\xi}\left(\frac{R_\ell(t,x)}{|R_\ell(t,x)|}\right)} \|_{C^1} \|\Theta_{\xi, \mu_{q+1}, n /\kappa} (\lambda_{q+1} t, \lambda_{q+1} x) \|_{L^p}
\\& \le C\sum_{n\ge \nop} \| (n/k)^{1/p} \chi (\kappa |R_\ell (t,x)| - n)  \|_{L^p} 
+C \lambda_{q+1}^{-1/p} \delta_{q+2}^{1/p} \lambda_q^{(d+2)(1+\alpha)+(1+1/p)((d(1+\alpha)+1))}
\\& \le C  \|R_\ell\|_{L^1}^{1/p} + C \lambda_{q+1}^{-1/p} \delta_{q+2}^{1/p} \lambda_q^{3(d+2) (1+\alpha)} 
\\& 
\leq C \delta_{q+1}^{1/p},
\end{split}
\end{equation}
provided that in the second last inequality we use $(d+2)(1+\alpha)+(1+1/p)((d(1+\alpha)+1))\le 3(1+\alpha)(d+2)$ and in the last inequality we use \eqref{eqn:r-l-l1}.

Next, by means of \eqref{eqn:l26} applied to the $\lambda_{q+1}^{-1}$-periodic function $(\Theta_{\xi, \mu_{q+1}, n/\kappa}(\lambda_{q+1}t, \lambda_{q+1}x)$, by \eqref{eqn:Thetanorms} (precisely $\| \Theta_{\xi, \mu_{q+1}, n/ \kappa}\|_{L^1}\le (\frac{n}{\kappa})^{1/p}\mu_{q+1}^{-d/p'}$), Lemma \ref{l:uglylemma} and \eqref{remark:sum is finite}, we estimate
\begin{align*}
| \theta_{q+1}^{(c)} (t)| 
& \le C\lambda_{q+1}^{-1} \sum_{n\ge \nop}\sum_{\xi\in \Lambda^{[n]}} \|  \chi( \kappa|R_\ell| - n ) \textstyle{a_{\xi}\left( \frac{ R_\ell }{ |R_\ell| } \right)} \|_{C^1}
\| \Theta_{\xi, \mu_{q+1}, n/\kappa}\|_{L^1}
\\&\qquad + \sum_{n\ge \nop}\sum_{\xi\in \Lambda^{[n]}} \| \chi( \kappa|R_\ell| - n ) \textstyle{a_{\xi}\left( \frac{ R_\ell }{ |R_\ell| } \right)} \|_{L^1} \| \Theta_{\xi, \mu_{q+1}, n/\kappa}\|_{L^1}
\\& \le C\lambda_{q+1}^{-1} \mu_{q+1}^{-d/p'} \lambda_{q}^{3(1+\alpha)(d+2)}
+ C \mu_{q+1}^{-d/p'} \sum_{n\ge \nop} \| \chi( (n/\kappa)^{1/p} \kappa|R_\ell| - n ) \textstyle{a_{\xi}\left( \frac{ R_\ell }{ |R_\ell| } \right)} \|_{L^1} 
\\& \le \frac{1}{2}\delta_{q+1}^{1/p} + \mu_{q+1}^{-d/p'}\|R_\ell\|_{L^1}^{1/p}\le \delta_{q+1}^{1/p}.
\end{align*} 
From the latter inequality, since $\theta_{q+1}^{(p)}$ is nonnegative, we also deduce that
\[
\inf_\TT \theta_{q+1}(t) \geq - \theta_{q+1}^{(c)}(t) \geq - \delta_{q+1}^{1/p} \, ,
\]
namely Statement (b) of Proposition~\ref{prop:inductive}.

\subsection{Estimate on $\|w_{q+1}\|_{L^{p'}}$ and on $\|Dw_{q+1}\|_{L^{r}}$}\label{sec:end}
Exactly with the same computation as in \eqref{eqn:theta-pert-p}, replacing $p$ with $p'$, we have that
\begin{equation}
\|w_{q+1}^{(p)}\|_{L^{p'}} 
\leq C \|R_\ell\|_{L^1}^{1/p'}+
C \lambda_{q+1}^{-1/p'} \delta_{q+2}^{1/p'} \lambda_q^{3(d+2) (1+\alpha)}
\leq C \delta_{q+1}^{1/p'}
\end{equation}
Concerning the corrector term $w_{q+1}^{(c)}$, we use \eqref{eqn:Wnorms} (precisely $\| W_{\xi, \mu_{q+1}, n/ \kappa}\|_{L^{p'}}\le (\frac{n}{\kappa})^{1/p'}\le \lambda_{q}^{(d+2)(1+\alpha)}
$) Lemma~\ref{lemma23} and \eqref{remark:sum is finite} to get
\begin{equation}\label{eqn:wcpprime}
\begin{split}
\|w_{q+1}^{(c)}\|_{L^{p'}}  &\leq
\frac 1 {\lambda_{q+1}} \sum_{n\ge \nop}\sum_{\xi \in \Lambda^{[n]}} 
\|\bar \chi(\kappa|R_\ell|-n)\|_{C^2}\|W_{\xi,\mu_{q+1},n/\kappa}\|_{L^{p'}}
\\& \leq  C\lambda_{q+1}^{-1}\sum_{n\geq\nop}
 \lambda_{q}^{2(d+2)(1+\alpha)} (n/\kappa)^{1/p'}
\\&\leq C\lambda_{q+1}^{-1} \delta_{q+2}^{1/p'}\lambda_{q}^{4(d+2)(1+\alpha)}
\le \delta_{q+2}^{1/p'} \le \delta_{q+1}^{1/p'}.
\end{split}
\end{equation}
Computing the gradient of $w_{q+1}^{(p)}$ and combining Lemma \ref{l:uglylemma} with \eqref{eqn:Wnorms} we have
\begin{align}
\|Dw_{q+1}^{(p)}\|_{L^r} &\leq  
\sum_{n\ge \nop}\sum_{\xi\in \Lambda^{[n]}} \|\bar \chi(\kappa |R_\ell|-n)\|_{C^1} \| W_{\xi,\mu_{q+1},n/\kappa}\|_{L^r}
+\sum_{n\ge \nop}\sum_{\xi\in \Lambda^{[n]}} \lambda_{q+1} \|DW_{\xi,\mu_{q+1},n/\kappa}\|_{L^r}
\\& \leq C \delta_{q+2}^{1/p'} \lambda_{q}^{3(1+\alpha)(d+2)+2+b\gamma d(1/p'-1/r)}
+C \delta_{q+2}^{1/p'} \lambda_{q}^{b+3(1+\alpha)(d+2)+b\gamma(1+d(1/p'-1/r))}
\\& \leq  \delta_{q+2}^{1/p'}\le \delta_{q+1}^{1/r}.
\end{align}
Concerning the corrector, by Lemma~\ref{lemma23} and similar computations as above,
\begin{align}
\|Dw_{q+1}^{(c)}\|_{L^r} &\leq C \sum_{n\ge \nop}\sum_{\xi\in \Lambda^{[n]}} \| \bar\chi(\kappa |R_\ell|-n)\|_{C^3} \|W_{\xi, \mu_{q+1}, n/\kappa}\|_{L^r} 
\\& \leq C \delta_{q+2}^{1/p'} \lambda_q^{ 5(1+\alpha)(d+2)
} \mu_{q+1}^{d(\frac 1 r - \frac 1 {p'})}
\leq \delta_{q+2}^{1/p'} \lambda_{q}^{5(1+\alpha)(d+2)-b(1+1/p)}
\leq \delta_{q+1}^{1/r}.
\end{align}

\subsection{New error $R_{q+1}$}\label{sec:newerror}
By definition the new error $R_{q+1}$ must satisfy
\begin{equation}\label{eqn:new-error}
\begin{split}
-\div R_{q+1} =& \partial_t \rho_{q+1} + \div(  \rho_{q+1} u_{q+1}) = \div ( \theta_{q+1}^{(p)} w_{q+1}^{(p)} - R_\ell) + \partial_t \theta_{q+1}^{(p)}+\partial_t \theta_{q+1}^{(c)} 
\\&+ \div( \theta_{q+1}^{(p)} u_\ell+ \rho_\ell w_{q+1} + \theta_{q+1}^{(p)}w_{q+1}^{(c)}
) + \div ( (\rho_q u_q)_\ell - \rho_\ell u_\ell) 
\end{split}
\end{equation}
In the second equality above we have used that $(\rho_\ell, u_\ell, R_\ell+ (\rho_q u_q)_\ell - \rho_\ell u_\ell)$ solves \eqref{eqn:CE-R}, that $\div u_\ell = \div w_{q+1}=0$, and that $\theta_{q+1}^{(c)}$ is constant in space.

Let us write
\begin{align*}
\partial_t \theta_{q+1}^{(p)} & = \sum_{n\ge \nop}\sum_{\xi\in \Lambda^{[n]}} \chi(\kappa |R_\ell|- n) \textstyle{a_{\xi}\left( \frac{R_\ell}{|R_\ell|} \right)} \partial_t\left[ \Theta_{\xi, \mu_{q+1}, n /\kappa}(\lambda_{q+1} t, \lambda_{q+1}x) \right]
\\& \quad + \sum_{n\ge \nop}\sum_{\xi\in \Lambda^{[n]}} \partial_t \left[ \chi(\kappa |R_\ell|- n) \textstyle{a_{\xi}\left( \frac{R_\ell}{|R_\ell|} \right)}\right]  \Theta_{\xi, \mu_{q+1}, n /\kappa}(\lambda_{q+1} t, \lambda_{q+1}x)
\\&=: ( \partial_t \theta^{(p)}_{q+1})_1+(\partial_t \theta^{(p)}_{q+1})_2,
\end{align*}
by using that $\Theta_{\xi, \mu_{q+1}, n /\kappa}$ and $W_{\xi, \mu_{q+1}, n /\kappa}$ solve the transport equation \eqref{eqn:itsolves} and Lemma~\ref{lemma:geom} we get the cancellation of the error $R_\ell$ up to lower order terms
\begin{align}\label{eqn:formulone}
\div &( \theta_{q+1}^{(p)} w_{q+1}^{(p)}) + (\partial_t \theta_{q+1}^{(p)})_1 - \div R_\ell
\\& =
\sum_{n\ge \nop} \sum_{\xi\in \Lambda^{[n]}}   \nabla \left[\chi(\kappa |R_\ell|- n)
\textstyle{a_{\xi}\left(\frac{R_\ell}{|R_\ell|}\right)}\right] (\Theta_{\xi, \mu_{q+1}, n /\kappa} W_{\xi, \mu_{q+1}, n /\kappa})(\lambda_{q+1} t, \lambda_{q+1}x)- \div R_\ell
\\&+\sum_{n\ge \nop} \sum_{\xi\in \Lambda^{[n]}} \chi(\kappa |R_\ell|- n)
\textstyle{a_{\xi}\left(\frac{R_\ell}{|R_\ell|}\right)}
\lambda_{q+1} \left[ \partial_t \Theta_{\xi, \mu_{q+1}, n /\kappa}+ \div(\Theta_{\xi, \mu_{q+1}, n /\kappa}W_{\xi, \mu_{q+1}, n /\kappa})\right](\lambda_{q+1} t, \lambda_{q+1}x)
\\&=\sum_{n\ge \nop} \sum_{\xi\in \Lambda^{[n]}}   \nabla \left[\chi(\kappa |R_\ell|- n)
\textstyle{a_{\xi}\left(\frac{R_\ell}{|R_\ell|}\right)}\right] \left[(\Theta_{\xi, \mu_{q+1}, n /\kappa} W_{\xi, \mu_{q+1}, n /\kappa})(\lambda_{q+1} t, \lambda_{q+1}x)- \frac{n}{\kappa} \xi \right]
\\&+ \sum_{n\ge \nop} \sum_{\xi\in \Lambda^{[n]}}   \nabla \left[\chi(\kappa |R_\ell|- n)
\textstyle{a_{\xi}\left(\frac{R_\ell}{|R_\ell|}\right)}\right] \frac{n}{\kappa}\xi - \div R_\ell
\\& =\sum_{n\ge \nop} \sum_{\xi\in \Lambda^{[n]}}   \nabla \left[\chi(\kappa |R_\ell|- n)
\textstyle{a_{\xi}\left(\frac{R_\ell}{|R_\ell|}\right)}\right] \left[(\Theta_{\xi, \mu_{q+1}, n /\kappa} W_{\xi, \mu_{q+1}, n /\kappa})(\lambda_{q+1} t, \lambda_{q+1}x)- \frac{n}{\kappa} \xi \right]
+ \div (\tilde R_\ell-R_\ell),
\end{align}
where
\[
\tilde R_\ell:= 
\sum_{n\ge \nop} \chi(\kappa |R_\ell|- n)	\frac{R_\ell}{|R_\ell|} \frac{n}{k}.
\]
We have
\begin{align}\label{eqn:tildeRestimate}
\notag
|R_\ell-\tilde R_\ell| & \le \Big|\sum_{n=-1}^{11
} \chi(\kappa |R_\ell|- n) R_\ell \Big|
+\Big| \sum_{n\ge \nop} \chi(\kappa |R_\ell|- n)\left( \frac{R_\ell}{|R_\ell|}\frac{n}{k}-R_\ell\right)	\Big|   
\\& \le \frac{13
}{\kappa} + \sum_{n\ge \nop} \chi(\kappa |R_\ell|- n)\left| |R_\ell|-\frac{n}{\kappa}\right|
\\& \le \frac{13
}{20
}\delta_{q+2}+\frac{3}{40
}\delta_{q+2}
\le \frac{15
}{20
} \delta_{q+2}.
\end{align}
We can now define $R_{q+1}$ which satisfies \eqref{eqn:new-error} as 
\begin{equation}
\begin{split}
-R_{q+1} :=& R^{quadr}+ (\tilde R_\ell- R_\ell) + R^{time}  +\theta_{q+1}^{(p)} u_\ell+ \rho_\ell w_{q+1} + \theta_{q+1}^{(p)}w_{q+1}^{(c)}+[(\rho_q u_q)_\ell - \rho_\ell u_\ell],
\end{split}
\end{equation}
where
\begin{equation}
\label{defn:Rquadr}
R^{quadr}: = \sum_{n\ge \nop} \sum_{\xi\in \Lambda^{[n]}} \fR \left[  \nabla \left(\chi(\kappa |R_\ell|- n)
\textstyle{a_{\xi}\left(\frac{R_\ell}{|R_\ell|}\right)}\right)\cdot  \left((\Theta_{\xi, \mu_{q+1}, n /\kappa} W_{\xi, \mu_{q+1}, n /\kappa})(\lambda_{q+1} t, \lambda_{q+1}x)- \frac{n}{\kappa} \xi \right)\right],
\end{equation}
\begin{equation}
\label{eqn:Rc}
R^{time}:=\nabla \Delta^{-1}( (\partial_t \theta_{q+1}^{(p)})_2 +\partial_t \theta_{q+1}^{(c)} + m ),
\end{equation}
\begin{equation*}
m :=\sum_{n\ge \nop} \sum_{\xi\in \Lambda^{[n]}} \int    \nabla \left[\chi(\kappa |R_\ell|- n)
\textstyle{a_{\xi}\left(\frac{R_\ell}{|R_\ell|}\right)}\right] \left[(\Theta_{\xi, \mu_{q+1}, n /\kappa} W_{\xi, \mu_{q+1}, n /\kappa})(\lambda_{q+1} t, \lambda_{q+1}x)- \frac{n}{\kappa} \xi \right] \, dx.
\end{equation*}
Notice that $R^{quadr}$ is well defined since by \eqref{eqn:rightaverage} the function $(\Theta_{\xi, \mu_{q+1}, n /\kappa} W_{\xi, \mu_{q+1}, n /\kappa})(\lambda_{q+1} t, \lambda_{q+1}x)- \frac{n}{\kappa} \xi$ has $0$ mean.
We now estimate in $L^1$ each term in the definition of $R_{q+1}$. From the second equality in \eqref{eqn:new-error} and since the average of $(\partial_t \theta_{q+1}^{(p)})_2$ is $m$ by integration by parts, we deduce that $ (\partial_t \theta_{q+1}^{(p)})_2 +\partial_t \theta_{q+1}^{(c)} + m$ has $0$ mean, so that $R^{time}$ is well defined.
Recall that  the estimate on $\|(\rho_q u_q)_\ell - \rho_\ell u_\ell\|_{L^1}$ has been already established in \eqref{e:commutatore}. 

By the property \eqref{ts:antidiv} of the antidivergence operator $\fR$, Lemma \ref{l:uglylemma} and \eqref{remark:sum is finite} we have
\begin{align*}
\|R^{quadr}\|_{L^1}  & \leq \frac C {\lambda_{q+1}} \sum_{n\ge \nop}\sum_{\xi\in \Lambda^{[n]}}
\| \chi(\kappa |R_\ell|- n)
\textstyle{a_{\xi}\left(\frac{R_\ell}{|R_\ell|}\right)}\|_{C^2}\|\Theta_{\xi, \mu_{q+1}, n/\kappa} W_{\xi, \mu_{q+1}, n/\kappa}\|_{L^1} \\&\leq C \delta_{q+2} \frac{\lambda_q^{4(1+\alpha)(d+2)+2}}{\lambda_{q+1}} \leq \frac{\delta_{q+2}}{20}.
\end{align*}
To estimate the terms which are linear with respect to the fast variables, we take advantage of the concentration parameter $\mu_{q+1}$. First of all, by Calderon-Zygmund estimates we get
\begin{align*}
\| R^{time} \|_{L^1} \le  C \| (\partial_t \theta_{q+1}^{(p)})_2+\partial_t \theta_{q+1}^{(c)} - m \|_{L^1}
\le  \| (\partial_t \theta_{q+1}^{(p)})_2 \|_{L^1}+ | \partial_t \theta_{q+1}^{(c)} | + | m |	.
\end{align*}
Next, notice that
\begin{align}\label{eqn:tpl1}
\| (\partial_t \theta_{q+1}^{(p)})_2 \|_{L^1}
& \leq 
C \sum_{n\ge \nop}\sum_{\xi\in \Lambda^{[n]}}\| \partial_t\big[ \chi(\kappa |R_\ell|- n) \textstyle{a_{\xi}\left( \frac{R_\ell}{|R_\ell|} \right)}\big]\|_{C^0}
\|\Theta_{\xi, \mu_{q+1}, n /\kappa} \|_{L^1}
\\& \leq C \delta_{q+2}^{1/p} \lambda_{q}^{3(1+\alpha)(d+2)}\mu_{q+1}^{-d/p'} \leq \frac{\delta_{q+2}}{20}.
\end{align}
From \eqref{eqn:tpl1}, \eqref{eqn:itsolves} and \eqref{eqn:l26} we get
\begin{align*}
& | \partial_t \theta_{q+1}^{(c)} |  + | m |	
\\ & \le \| (\partial_t \theta_{q+1}^{(p)})_2 \|_{L^1}
+ \left| \sum_{n\ge \nop}\sum_{\xi\in \Lambda^{[n]}} \int \chi(\kappa|R_\ell| - n ) \textstyle{ a_{\xi} \left( \frac{ R_\ell }{|R_\ell|} \right) }	\partial_t \left[ \Theta_{\xi, \mu_{q+1}, n /\kappa}(\lambda_{q+1} t, \lambda_{q+1}x 
) \right]\, dx  \right| + |m|
\\& \le \frac{\delta_{q+2}}{20} + 
\left| \sum_{n\ge \nop}\sum_{\xi\in \Lambda^{[n]}} \int \chi(\kappa|R_\ell| - n ) \textstyle{ a_{\xi} \left( \frac{ R_\ell }{|R_\ell|} \right) }	\div \left[ (\Theta_{\xi, \mu_{q+1}, n /\kappa}W_{\xi, \mu_{q+1}, n/\kappa})(\lambda_{q+1} t, \lambda_{q+1}x 
) \right]\, dx  \right| + |m|
\\&=\frac{\delta_{q+2}}{20} +
2\left| \sum_{n\ge \nop}\sum_{\xi\in \Lambda^{[n]}} \int \nabla \left[\chi(\kappa|R_\ell| - n ) \textstyle{ a_{\xi} \left( \frac{ R_\ell }{|R_\ell|} \right) }\right]\cdot  \left[ (\Theta_{\xi, \mu_{q+1}, n /\kappa}W_{\xi, \mu_{q+1}, n/\kappa})(\lambda_{q+1} t, \lambda_{q+1}x 
)-\frac{n}{k}\xi \right]\, dx  \right|
\\& \le \frac{\delta_{q+2}}{20} + 
\frac{2 \sqrt{d}}{\lambda_{q+1}}\sum_{n\ge \nop}\sum_{\xi\in \Lambda^{[n]}} \| \chi(\kappa|R_\ell| - n ) \textstyle{ a_{\xi} \left( \frac{ R_\ell }{|R_\ell|} \right) }\|_{C^2} \| \Theta_{\xi, \mu_{q+1}, n /\kappa}W_{\xi, \mu_{q+1}, n/\kappa}\|_{L^1}
\\& \le \frac{\delta_{q+2}}{20}+ C \lambda_{q+1}^{-1} \delta_{q+2} \lambda_{q}^{4(1+\alpha)(d+2)}
\le \frac{1}{10}\delta_{q+2}.
\end{align*} 
Similarly, we have that
\begin{equation}
\begin{split}
\| & \theta_{q+1}^{(p)} u_\ell + \rho_\ell w_{q+1}^{(p)}\|_{L^1} \leq \| \theta_{q+1}^{(p)}\|_{L^1} \| u_\ell\|_{L^\infty} + \|\rho_\ell\|_{L^\infty} \|w_{q+1}^{(p)}\|_{L^1}
\\&\leq  \sum_{n\ge \no} \sum_{\xi \in \Lambda^{[n]}} \| \chi(\kappa|R_\ell| - n ) \textstyle{ a_{\xi} \left( \frac{ R_\ell }{|R_\ell|} \right) }\|_{L^\infty} \| \Theta_{\xi, \mu_{q+1},n/\kappa}\|_{L^1} \| u_\ell\|_{L^\infty} + \|\rho_\ell\|_{L^\infty} \|\bar \chi(\kappa|R_\ell| - n)\|_{L^{\infty}}\|W_{\xi,\mu_{q+1},n/\kappa}\|_{L^1}
\\& \leq C\delta_{q+2}^{1/p} \lambda_{q}^{2(1+\alpha)(d+2)} \mu^{-d/p'}_{q+1} + C\delta_{q+2}^{1/p'} \lambda_{q}^{2(1+\alpha)(d+2)}\mu^{-d/p}_{q+1}
\leq \frac{\delta_{q+2}}{20}.
\end{split}
\end{equation}
In the last inequality we used  $2\beta b^2\le 1$, the definition of $\gamma$, and  $b(1+1/p)\ge 2(1+\alpha)(d+2)+1$.

Finally, from \eqref{eqn:theta-pert-p} and \eqref{eqn:wcpprime}
\begin{equation}
\begin{split}
\|  ({\rho_{\ell}}+ \theta_{q+1}^{(p)})w_{q+1} ^{(c)}
\|_{L^1} &\leq
(\|{\rho_{\ell}}\|_{C^1}+ \|  \theta_{q+1}^{(p)}\|_{L^{p}}) \|w_{q+1}^{(c)}\|_{L^{p'}}
\\&{\leq C \lambda_q^{4(1+\alpha)(d+2)+\alpha} \lambda_{q+1}^{-1}}
\leq \frac{1}{20} \delta_{q+2}
.
\end{split}
\end{equation}

\subsection{Estimates on higher derivatives}\label{sec:highderiv} By the choice of $\alpha$, since in particular $\alpha\ge 2+\gamma(d+1)$, we have that
\begin{equation}
\label{eqn:est-rhoC1}
\begin{split}
\| \rho_{q+1}\|_{C^1} &\leq \| \rho_{\ell}\|_{C^1}+ \| \theta_{q+1}\|_{C^1} \leq \| \rho_{q}\|_{C^1}
+ \sum_{n\ge \no} \sum_{\xi\in \Lambda^{[n]}} \| \chi(\kappa|R_\ell| - n ) \textstyle{ a_{\xi} \left( \frac{ R_\ell }{|R_\ell|}\right)}\|_{C^1} \|\Theta_{\xi, \mu_{q+1},n/\kappa}(\lambda_{q+1}x) \|_{C^1}
\\&\leq C \lambda_q^\alpha + 
C \lambda_q^{3(1+\alpha)(d+2)} \lambda_{q+1}\mu_{q+1}^{1+d/p}
\leq \lambda_{q+1}^{\alpha}.
\end{split}
\end{equation}
An entirely similar estimate is valid for $\|\partial_t \rho_{q+1}\|_{C^0}$
and the one for $\|u_\ell+ w_{q+1}^{(p)}\|_{W^{2,r}}$ is analogous. 
Concerning $ w_{q+1}^{(c)}$, we use Lemma~\ref{lemma23} and \eqref{remark:sum is finite}
\begin{equation*}
\begin{split}
\| w_{q+1}^{(c)}\|_{W^{2,r}} &\le 
\sum_{n\ge \nop}\sum_{\xi\in \Lambda^{[n]}} \lambda_{q+1}^{}
\| \bar  \chi(\kappa |R_\ell|-n) \|_{C^4} \|W_{\xi , \mu_{q+1}, n/\kappa} \|_{W^{2,r}}
\\&\leq C \lambda_q^{6 (1+\alpha) (d+2)} \lambda_{q+1}^2\mu_{q+1}^{2+ d(1/p'-1/r)}
\leq  \lambda_{q+1}^{\alpha}.
\end{split}
\end{equation*}
It remains just to estimate
\[
\| \partial_t u_q \|_{L^1}   \le  \| \partial_t u_\ell \|_{L^1}+\|\partial_t w^{(p)}_{q+1}\|_{L^1} + \|\partial_t w^{(c)}_{q+1}\|_{L^1}.
\]
From \eqref{eqn:Wnorms} and Lemma~\ref{l:uglylemma}
\begin{align*}
\|\partial_t w^{(p)}_{q+1} \|_{L^1} & \le \sum_{n\ge \nop}\sum_{\xi\in \Lambda^{[n]}} \lambda_{q+1}\| \partial_t W_{\xi, \mu_{q+1}, n/\kappa} \|_{L^1}
+ \| \partial_t \bar{\chi} (\kappa |R_\ell| - n )  \|_{L^{\infty}}  \|  W_{\xi, \mu_{q+1}, \kappa/n}  \|_{L^1}
\\& C\delta_{q+2}^{2/p'}\lambda_{q}^{(1+2/p')(d(1+\alpha)+1)}\lambda_{q+1}\mu_{q+1}^{1+\gamma(1+d(2/p'-1))}\le \lambda_{q+1}^{2+\gamma(d+1)}\le \lambda_{q+1}^{\alpha}.
\end{align*}
A similar computation is valid for $\| \partial_tw_{q+1}^{(c)} \|_{L^1}$.

\section{The building blocks and the iterative scheme in dimension $d=2$}\label{s:2d}
We describe in this section how the proof of Proposition~\ref{prop:inductive}, concluded above in dimension $d \geq 3$, should be modified to cover the case $d=2$. The main obstruction in this regard is that the building blocks in Section 4 cannot be translated, as we did in dimension $d\geq 3$ in Lemma \ref{lemma:disjointsupports}, to make sure that their support is disjoint in space. 
In dimension $d=2$, we need to make them disjoint in space-time, observing that they are described by a small ball which translates at constant speed according to a translating vector field which is supported on a slightly larger ball.

\subsection{Iteration scheme and definition of the perturbations}
We choose the parameters as in Section~\ref{sec:param} and we perform the convolution step as in Section~\ref{sec:convol}. Similarly, the cutoffs $\chi\in C^\infty_c (-\frac{3}{4}, \frac{3}{4})$ and $\bar \chi\in C^{\infty}_c(-\frac{4}{5}, \frac{4}{5})$ are chosen as in Section~\ref{subsection:definitionofperturbations}, as well as $\kappa=\frac{20
}{\delta_{q+2}}$ and the sets $\Lambda^1$, $\Lambda^2$. 

Starting from the building blocks introduced in Section~\ref{sec:build} we will choose positive real numbers $v_n$ (which will satisfy $|v_n-n|\leq 1$) and real numbers $a_{\xi,n}$ and define
\begin{equation}
	W_{\xi, \mu_{q+1}, v_n/\kappa}( t , x ):=\tilde W_{\xi, \mu_{q+1}, v_n/\kappa}( t, x-a_{\xi,n}\xi),
	\quad
	\Theta_{\xi, \mu_{q+1}, v_n/\kappa}(t, x):=\tilde \Theta_{\xi, \mu_{q+1}, v_n/\kappa}(t, x-a_{\xi,n} \xi),
\end{equation}
for any $n\ge 1$ and $\xi\in \Lambda^{[n]}$. The difficult part will be to choose $v_n$ and $a_{\xi,n}$ so that 

\begin{equation}\label{eq:disjointsupportsdim2}
	W_{\xi, \mu_{q+1}, v_n/\kappa} \cdot \Theta_{\xi', \mu_{q+1}, v_m/\kappa}( t , x )=0
	\quad \text{for any $(x,t)\in \TT^2\times \RR^+$,}
\end{equation}
whenever $\xi\neq \xi'$, $|n-m|\le 1$ and $n,m\le  C  \lambda_{q}^{d(1+\alpha)+1}$.

Assuming for the moment that this can be done, we define the new density and vector field as we did in Section \ref{subsection:definitionofperturbations} up to replacing all functions $\Theta_{\xi, \mu_{q+1}, n /\kappa}$ and $W_{\xi, \mu_{q+1}, n /\kappa}$ with $\Theta_{\xi, \mu_{q+1}, v_n /\kappa}$ and $W_{\xi, \mu_{q+1}, v_n /\kappa}$.
The proof of the proposition would then follow the same arguments: we only need to modify sligthly the definition of $R_{q+1}$ . Most of this section will be devoted to choose $v_n$ and $a_{\xi,n}$ so that \eqref{eq:disjointsupportsdim2} holds. Once we have achieved the latter, we will then show how to change the definition of $R_{q+1}$.

\subsection{Geometric arrangement} The main geometric construction is given by the following proposition. $\Lambda_1 \cup \Lambda_2$ is the set of possible space directions for the building blocks, while the sequence $\{w_n\}$ is in fact the set of values $ \mu_{q+1}^{d/p'} \left(\frac{n}{\kappa}\right)^{1/p'}$. Observe that ,when $\frac{w_n}{w_{n-1}}$ is a rational number, the relative position of the space supports of the building blocks is time-periodic. If each space support were merely a point we could easily make them always disjoint and in fact we could identify their minimum distance. If we write $\frac{w_n}{w_{n-1}} = 1 + \frac{A(n)}{N(n)}$ with $A(n)$ and $N(n)$, intuitively such minimum distance should be made comparable to $\frac{1}{N(n)}$. 

\begin{propos}\label{prop:disjointsupportdim2}
	Consider two disjoint sets $\Lambda^1, \Lambda^2 \subset \RR^2$ as in Lemma \ref{lemma:geom}. Let $\{w_n\}_{n\in {\NN\setminus \{0\}}}\subset \RR$  satisfy
	\begin{equation*}
		\frac{w_n}{w_{n-1}}= 1 + \frac{A(n)}{N(n)} < 10
		\quad \text{for every $n\ge 2$}
	\end{equation*}
	where $N(n)\le \bar C n$, for a given $\bar C>0$. 
	
	Then there exists a constant $c_0:= c_0(\bar C, \Lambda^1,\Lambda^2) >0$ with the following property. For every $\xi \in \Lambda^k$ and $n\in \NN$ there exists 
	$a_{\xi,n}\in [0,1]$ such that the family of curves
	\begin{equation}
		x_{\xi,n}(t):= (w_n t+a_{\xi,n}) \xi \qquad \mbox{with $\xi\in \Lambda^k$,  $n\equiv k$ mod $2$ and $k\in \{1,2\}$}
	\end{equation}
	satisfies
	\begin{equation}
		d_{\TT^2}(x_{\xi,n}(t) , x_{\xi',m}(t)) \ge \frac{c_0}{n}
		\quad \text{for every $t\ge 0$, when $|n-m|\le 1$ and $\xi\neq \xi'$}.
	\end{equation}
\end{propos}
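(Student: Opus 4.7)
The plan is to construct the offsets $a_{\xi,n}$ inductively in $n$ by a measure-theoretic argument. At each step the ``bad'' set for $a_{\xi,n}$ (values violating some constraint with a previously chosen offset) will be covered by finitely many intervals of total Lebesgue measure at most $C(\bar C,\Lambda^1,\Lambda^2)\,c_0$, so a valid choice in $[0,1]$ exists as soon as $c_0$ is chosen small enough depending on $\bar C,\Lambda^1,\Lambda^2$.

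\emph{Reduction to a closed geodesic.} Fix $\xi\in\Lambda^{[n]}$, $\xi'\in\Lambda^{[m]}$ with $\xi\neq\xi'$ and $|n-m|\le 1$. Setting $v:=w_n\xi-w_m\xi'$ and $b:=a_{\xi,n}\xi-a_{\xi',m}\xi'$, one has $x_{\xi,n}(t)-x_{\xi',m}(t)\equiv b+tv\pmod{\ZZ^2}$, so the constraint $d_{\TT^2}(x_{\xi,n}(t),x_{\xi',m}(t))\ge c_0/n$ for every $t\ge 0$ amounts to the orbit $\{b+tv\pmod{\ZZ^2}:t\in\RR\}$ avoiding the open ball $B_{c_0/n}(0)\subset\TT^2$. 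Writing $w_n/w_m=P/Q$ with coprime positive integers $P,Q$, the hypothesis $w_n/w_{n-1}<10$ together with $N(n)\le \bar C n$ gives $P+Q\le 11\bar C n+2$ if $|n-m|=1$, while $P=Q=1$ if $n=m$. Letting $D$ be a common denominator for the rational coordinates of vectors in $\Lambda^1\cup\Lambda^2$, the vector $DP\xi-DQ\xi'$ is a nonzero element of $\ZZ^2$ parallel to $v$ (nonzero by linear independence of $\xi,\xi'$), so the orbit in question is indeed a closed geodesic on $\TT^2$.

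\emph{Bad set estimate.} Let $u\in\ZZ^2$ be the primitive vector parallel to $v$; then $|u|\le D(P+Q)\le C_1 n$ with $C_1=C_1(\bar C,\Lambda^1,\Lambda^2)$. Pick $u'\in\ZZ^2$ with $\det(u,u')=1$ and decompose $b=su+ru'$ in this integer basis. A standard computation shows that the $\TT^2$-distance from the closed geodesic $\{b+tu:t\in\RR\}$ to the origin equals $\|r\|_{\TT^1}/|u|$, hence the constraint becomes $\|r(b)\|_{\TT^1}\ge c_0|u|/n$. The function $a_{\xi,n}\mapsto r(b)$ is affine with slope $\beta:=\xi\cdot u^\perp$, and writing $u=(DP\xi-DQ\xi')/g$ (with $g$ the gcd of the entries of $DP\xi-DQ\xi'$) a two-dimensional cross-product computation yields
\begin{equation*}
\frac{|\beta|}{|u|}=\frac{Q\,|\xi\times\xi'|}{|P\xi-Q\xi'|}\ge\frac{Q\,|\xi\times\xi'|}{P+Q}\ge c_2>0,
\end{equation*}
where $c_2=c_2(\bar C,\Lambda^1,\Lambda^2)$ exploits the ratio bound $Q/(P+Q)\ge 1/11$ and the uniform lower bound on $|\xi\times\xi'|$ over distinct pairs in the finite set $\Lambda^1\cup\Lambda^2$. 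Consequently, for any fixed $a_{\xi',m}$, the set of $a_{\xi,n}\in[0,1]$ violating the constraint has Lebesgue measure at most $4c_0|u|/n+2c_0/(nc_2)\le C_3\, c_0$ for some $C_3=C_3(\bar C,\Lambda^1,\Lambda^2)$.

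\emph{Induction.} Build $\{a_{\xi,n}\}$ inductively in $n$, processing $\xi\in\Lambda^{[n]}$ within each step in any fixed order. When choosing $a_{\xi,n}$, the active constraints involve previously fixed $a_{\xi',m}$ with $m\in\{n-1,n\}$ (constraints with $m=n+1$ will be taken care of at step $n+1$); there are at most $K:=|\Lambda^1|+|\Lambda^2|$ of them. The union of their bad sets has Lebesgue measure at most $KC_3\, c_0$, which is less than $1/2$ provided $c_0\le(2KC_3)^{-1}$. Fixing this $c_0=c_0(\bar C,\Lambda^1,\Lambda^2)$ once and for all, a valid $a_{\xi,n}\in[0,1]$ exists at every step, completing the induction. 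The main obstacle is the uniform lower bound $|\beta|/|u|\ge c_2>0$: without it the bad set in $a_{\xi,n}$ could cover the whole interval $[0,1]$ and the scheme would collapse. This bound is precisely what is forced by the arithmetic hypothesis $w_n/w_{n-1}<10$ combined with the finiteness of $\Lambda^1\cup\Lambda^2$.
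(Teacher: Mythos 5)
Your argument is correct, and the outer scheme is the same as the paper's: order the pairs $(\xi,n)$, choose the offsets inductively, show that each previously fixed offset excludes a set of candidate values $a_{\xi,n}\in[0,1]$ of measure $O(c_0)$, and fix $c_0$ small (depending only on $\bar C$ and the cardinality of $\Lambda^1\cup\Lambda^2$) so that the union bound leaves a nonempty good set. Where you genuinely differ is in the key per-pair measure estimate. The paper isolates it in a separate lemma (Lemma 7.2) about bad shifts $s$, proved by studying the additive group $T_{int}$ of intersection times of the closed curves $t\mapsto t\xi$ and $t\mapsto t\xi'$, choosing generators, and showing that the relevant set $E=\{k_1(wt_1-t_1')+k_2(wt_2-t_2')\}$ is discrete with spacing bounded below by $c(\xi,\xi')N^{-1}$, which gives the $\eps N^{-1}$ threshold. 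You instead pass to the relative motion $b+tv$ on $\TT^2$, note it sweeps a closed geodesic, and compute the distance of that geodesic to the origin exactly as $\mathrm{dist}(\det(u,b),\ZZ)/|u|$ in a unimodular basis $(u,u')$ with $u$ primitive and parallel to $v$; the hypotheses then enter cleanly through $|u|\le D(P+Q)\lesssim \bar C n$ and the slope bound $|\det(u,\xi)|/|u|\ge Q|\xi\times\xi'|/(P+Q)\ge c_2>0$, so the bad set of $a_{\xi,n}$ has measure $O(c_0)$ uniformly in $n$. This is a more quantitative and arguably more transparent route to the same estimate. Two caveats you share with the paper's own proof, so they are not gaps of yours: both arguments implicitly use that distinct $\xi,\xi'$ are linearly independent (an antipodal pair $\xi'=-\xi$ would make $\xi\times\xi'=0$, respectively make the paper's matrix with columns $\xi,\xi'$ singular), and both enforce the separation at the larger of the two indices, yielding $c_0/\max(n,m)$ rather than $c_0/\min(n,m)$, which is harmless after replacing $c_0$ by $c_0/2$.
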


The proof of the proposition is based on the following elementary lemma.

\begin{lemma}\label{lemma:disjointsupportdim2}
	Fix two different vectors $\xi, \xi'\in \mathbb{S}^{1}\cap \mathbb{Q}^2$ and a number $w= 1+ \frac{A}{N}<10$, with $A$ and $N$ positive integers and coprime. Then there exists $C=C(\xi, \xi')$ such that
	\begin{itemize}
		\item[(i)]
			$\Leb 1( [0,1] \setminus  \{s : \, d_{\TT^2}( t\xi, (t + s) \xi')\ge \eps \quad \forall\, t\ge 0  \})< C \eps$;
		\item[(ii)]
		$\Leb 1( [0,1] \setminus \{s: \, d_{\TT^2}( t \xi, (t w + s)  \xi')\ge \eps N^{-1} \quad \forall\, t\ge 0  \})< C \eps$.
	\end{itemize}
\end{lemma}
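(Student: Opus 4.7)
The plan is to treat (i) and (ii) in parallel by reducing each to a lattice-point counting argument via a single linear-algebraic framework. For (i), introducing $\eta := \xi - \xi'$ rewrites the relevant displacement as $t\xi - (t+s)\xi' = t\eta - s\xi'$; for (ii), introducing $\zeta := \xi - w\xi'$ rewrites it as $t\xi - (tw+s)\xi' = t\zeta - s\xi'$. The unifying algebraic observation, and what makes a single argument serve both parts, is the identity $\zeta\times\xi' = \xi\times\xi'$ (because $\xi'\times\xi'=0$). Consequently the $2\times 2$ matrix $M := [\zeta \mid -\xi']$ has $|\det M| = |\xi\times\xi'|$ independently of $w$. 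Assuming $\xi, \xi'$ are linearly independent (otherwise the conclusion is vacuous for small $\eps$; in the application in Proposition \ref{prop:disjointsupportdim2} one chooses $\Lambda^1, \Lambda^2$ so that no antipodal pair occurs), both $|\det M|$ and $\|M^{-1}\|$ are uniformly bounded in terms of $\xi, \xi'$ alone.

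Next I would use rationality to control the period of the closed geodesic $\{t\zeta \bmod \ZZ^2\}$. Writing $\xi = q/D$, $\xi' = q'/D$ with $q, q' \in \ZZ^2$ and a common denominator $D\in \NN$, we have $D\eta \in \ZZ^2$, and the direct computation $DN\zeta = N(q-q') - Aq' \in \ZZ^2$ (using $w = 1 + A/N$) shows $DN\zeta\in \ZZ^2$. Hence the period $T_{\mathrm{per}}$ of the geodesic is at most $D$ in case (i) and at most $DN$ in case (ii). In either case, ``$s$ is bad'' is equivalent to the existence of $t \in [0, T_{\mathrm{per}})$ and $k\in \ZZ^2$ with $|M(t,s)^{\top} - k| < \delta$, where $\delta = \eps$ in (i) and $\delta = \eps/N$ in (ii).

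The final step is a direct double count. For each $k \in \ZZ^2$, the preimage of $B_\delta(k)$ under $(t,s)\mapsto M(t,s)^{\top}$ is an ellipse whose projection onto the $s$-axis is an interval of length at most $2\delta\|M^{-1}\| = O(\delta)$. The number of $k$ which can contribute is controlled by the area of $M([0, T_{\mathrm{per}})\times[0,1]) = |\det M|\, T_{\mathrm{per}}$ together with an $O(T_{\mathrm{per}}+1)$ perimeter term; this is $O(1)$ in (i) and $O(N)$ in (ii). Multiplying yields $|\{\text{bad } s\}| = O(\eps)$ in both cases. The main obstacle I anticipate is confirming the uniformity of constants in $w$: the $O(N)$ growth of the number of contributing lattice points in (ii) must be canceled \emph{exactly} by the $\eps/N$ threshold, and this cancellation depends crucially on $|\det M|$ being independent of $w$, which is precisely the content of the cross-product identity $\zeta\times\xi' = \xi\times\xi'$.
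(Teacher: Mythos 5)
Your argument is correct (with the same tacit assumption as the paper: $\xi,\xi'$ must be non-parallel, since for $\xi'=-\xi$ the ``good'' set is actually empty and the estimate fails — the paper's proof likewise uses invertibility of the matrix with columns $\xi,\xi'$, so calling this case ``vacuous'' is the only slight misstatement), but it follows a genuinely different route from the paper. The paper studies the intersection lattice $T_{int}=\{(t,t'):t\xi=t'\xi'\ \text{on }\TT^2\}$, picks two generators, snaps the time $t$ of a near-collision to an intersection point of the two closed geodesics, and concludes that a bad shift $s$ must lie within $C\eps$ (resp. $C\eps N^{-1}$) of a discrete subset $E\subset\RR$ whose consecutive gaps are bounded below by a constant (resp. by $c/N$); the $1/N$ loss in the spacing of $E$ is then exactly compensated by the $\eps/N$ threshold. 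You instead set $\zeta=\xi-w\xi'$, encode a near-collision as $|M(t,s)^{\top}-k|<\delta$ with $M=[\zeta\mid -\xi']$ and $k\in\mathbb{Z}^2$, use rationality only to get a period $\le D$ (resp. $\le DN$) for $t\mapsto t\zeta$ mod $\mathbb{Z}^2$, and then do a two-dimensional lattice-cell count: $O(T_{per}+1)$ lattice points can meet the parallelogram $M([0,T_{per})\times[0,1])$, each contributing an $s$-interval of length $O(\delta\|M^{-1}\|)$, and the $O(N)\times O(\eps/N)$ product closes the estimate. The uniformity in $w$ that the paper obtains from the spacing analysis of $E$ is in your argument isolated in the identity $\zeta\times\xi'=\xi\times\xi'$ (so $|\det M|$ and, using $w<10$, $\|M^{-1}\|$ and the perimeter term are controlled by $\xi,\xi'$ alone). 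What each approach buys: the paper's proof makes the geometric mechanism explicit (bad shifts cluster near the finitely many intersection times of the two geodesics, with gaps shrinking like $1/N$), which is conceptually aligned with how the lemma is used in Proposition~\ref{prop:disjointsupportdim2}; yours avoids the structure theory of $T_{int}$ and the lower bound on the gaps of $E$ altogether, replacing them by a routine area/perimeter lattice count with rather explicit constants, arguably shorter and more mechanical to verify.
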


\begin{proof}
	Set $T_{int}:=\{ (t,t'): \, \xi t= \xi' t' \, \text{on} \, \TT^2 \}$ and observe that $T_{int}\subset \mathbb{Q}^2$ since the matrix with columns $\xi$ and $\xi'$ is invertible  with rational coefficients. Moreover $T_{int}$ is an additive discrete subgroup of $\RR^2$, hence it is a free group of rank $k\in \{0,1,2\}$.
    Denoting by $T$ and $T'$ the period of, respectively, $t\to \xi t$ and $t\to \xi' t$ one has that $(T,0),(0,T')\in T_{int}$. This implies that the rank of $T_{int}$ is two, hence we can find two generators $(t_1, t'_1), (t_2, t_2')\in T_{int}$.
    
    Let us finally introduce $A:=\{ \xi t\in \TT^2: \, (t,s)\in T_{int}\, \text{ for some } s\in \RR \}$ to denote the set of points in $\TT^2$ where the supports of the curves $t\to t \xi$ and $t\to t\xi'$ intersect. 
	
	\medskip
	
	Let us now prove (i). Let $s\in [0, 1]$ be such that there exists $t\ge 0$ satisfying $d_{\TT^2}( t\xi, (t+s)\xi')<\eps$.
	There exists $q\in A$ such that
	$d_{\TT^2}(t\xi, q)\le \bar c\eps$, where $\bar c=\bar c(\xi,\xi')>1$,
	hence up to modifying $t$ we can assume that $t\xi=:q \in A$ and $d_{\TT^2}(q,(t+s) \xi')\le 2\bar c\eps$. Since $t\xi\in A$ there exists $t'$ such that $(t,t')\in T_{int}$ and, exploiting the fact that $(t_1, t'_1), (t_2, t_2')\in T_{int}$ are generators, we can find $k_1, k_2\in \mathbb{Z}$ such that $t= k_1 t_1+k_2 t_2$ and $t'=k_1 t_1' + k_2 t_2'$.  The following identity holds on $\TT^2$ 
	\begin{equation*}
		(t+s)\xi' = (k_1 t_1+k_2 t_2) \xi' + s \xi' = (k_1(t_1-t_1')+k_2(t_2-t_2'))\xi' + q + s\xi',
	\end{equation*}
	therefore $d_{\TT^2}((k_1(t_1-t_1')+k_2(t_2-t_2')+s)\xi', 0)\le 2\bar c \eps$. This implies $-s\in B_{2\bar c\eps}(k_1(t_1-t_1')+k_2(t_2-t_2')) + \mathbb{Z}T'$.
    Notice that the set $E:=\{  k_1(t_1-t_1')+k_2(t_2-t_2'): \, k_1, k_2\in \mathbb{Z} \}$ is discrete since $t_1-t_1'$, $t_2-t_2'$ are rational numbers. Any two consecutive points in $E$ have a fixed distance $c=c(\xi, \xi')>0$ and $E+\mathbb{Z}T'=E$. In particular
    \begin{equation}
        \Leb 1 ( [0,1] \setminus  \{s : \, d_{\TT^2}( t\xi, (t + s) \xi')\ge \eps \quad \forall\, t\ge 0  \}) \le 
    	\Leb{1} \left([-1,1] \cap \bigcup_{r\in E} B_{2\bar c\eps}(r) \right) \le \frac{4 \bar c}{c} \eps.
    \end{equation}
    
    \medskip
    
    Let us now pass to the proof of (ii). Let $s\in [0, 1]$ be such that $d_{\TT^2}( t\xi, (t+s) w \xi')<\frac{\eps}{N}$ for some $t\ge 0$. Arguing as above we can assume that $t \xi =q \in A$, $ d_{\TT^2} (q , (t w +  s) \xi')\le 10\bar c\eps N^{-1}$
    and we can find $t'\in \RR$ and $k_1,k_2\in \mathbb{Z}$ such that
    $(t,t')\in T_{int}$ and $k_1 t_1 + k_2 t_2 =t$, $k_1 t_1' + k_2 t_2' = t'$.
    We have the following identity on $\TT^2$
    \begin{align*}
    (t w + s) \xi'  & = t \xi' + ( t ( w - 1 ) + s ) \xi'
    = (k_1 t_1+k_2 t_2) \xi' + ( t (w - 1) + s )  \xi'
    \\& =
    q + (k_1(t_1-t_1')+k_2(t_2-t_2') +  t (w-1) + s)  \xi'
    =q + (k_1(w t_1-t_1') + k_2(w t_2-t_2') + s) \xi'
    \end{align*}
    therefore, arguing as above we deduce $-s\in B_{10\bar c\eps N^{-1}}((k_1(wt_1-t_1') + k_2(wt_2-t_2'))) + \mathbb{Z}T'$.
    
    Notice now that the set $E:=\{  k_1( w t_1-t_1')+k_2( w t_2-t_2'): \, k_1, k_2\in \mathbb{Z} \}$ is discrete, any two consecutive points in $E$ have a fixed distance $c \ge c'(\xi, \xi')N^{-1}>0$ and $E + \mathbb{Z} T'= E$. In particular
    \begin{align}
    \Leb 1 ( [0,1] \setminus & \{s : \, d_{\TT^2}( t\xi, (t w + s) \xi')\ge  \eps \quad \forall\, t\ge 0  \})  \le 
    \Leb{1} \left([0,1] \cap \bigcup_{r\in E} B_{10\bar c\eps N^{-1}}(r) \right)
    \\ & \le \frac{2}{c'(\xi,\xi') N^{-1}} 10 \bar c \eps N^{-1}
    \le \frac{20 \bar c }{ c'(\xi,\xi')} \eps.\qedhere
    \end{align}   
\end{proof}

\begin{proof}
	[Proof of Proposition \ref{prop:disjointsupportdim2}]
Let us write
	\[
	\Lambda^k=\{\xi_{m,k}: \,m=1,...,m_0 \} \quad\text{for $k=1,2$}\, .
	\]
The key ingredient is Lemma \ref{lemma:disjointsupportdim2} and indeed the constant $c_0$ is chosen such that
\begin{equation}\label{e:choice_c_0}
2C  {\bar C} c_0 m_0 <1\, ,
\end{equation}
where $C$ is the constant appearing in Lemma \ref{lemma:disjointsupportdim2}(i)\&(ii). 

Notice that we are interested in pairs $(\xi_{m,k},n)$ such that $k\equiv n$ mod $2$. Without loss of generality we can thus assume that $k$ is a function of $n$ and takes the values $1$ or $2$ depending on the congruence class of $n$ modulo $2$. 
In particular we will use the shorthand notation $a_{m,n}$ for the point $a_{\xi_{m, k},n}$. We will find $a_{m,n}$ inductively, after endowing  
the set $\{1, \ldots , m_0\}\times \mathbb N\setminus \{0\}$ with the lexicographic order. More precisely we write $(m,n) \leq (m', n')$ if
\begin{itemize}
\item either $n<n'$ 
\item or $n=n'$ and $m<m'$.
\end{itemize} 
At the starting point of the induction we set $a_{1,1}=0$. For the inductive step, we fix $(m',n')$ and
	assume that $a_{m,n}$ has been already defined for any $(m,n) \leq (m',n')$. If $m'< m_0$ we need to define $a_{m'+1,n'}$, otherwise we have $m'=m_0$ and we need to define $a_{1, n'+1}$. We explain how to proceed just in the case $m'< m_0$, since the other case follows from the same argument. To fix ideas, let us assume that the congruence class of $n'$ is $1$, so that the congruence class of $n'-1$ is $2$. We look for 
	 $a_{m'+1, n'}\in [0,1]$ such that
	\begin{equation}\label{zz1}
		d_{\TT^2}(( t w_{n'} +a_{m'+1, n'} )\xi_{m'+1,1} , x^{n'}_{\xi_{m,1}}(t) ) \ge c_0 \ge \frac{c_0}{n'}
		\quad \text{for every $t>0$, when $1\le m < m'+1$}
	\end{equation}
	and
	\begin{equation}\label{zz2}
		d_{\TT^2} ( (t w_{n'}+a_{m'+1, n'} ) \xi_{m'+1,1} 
		, x^{n'- 1}_{\xi_{m, 2}}(t)) \ge \frac{c_0}{n'}
		\quad\text{for every $t>0$, when $1\le m \le m_0$}
	\end{equation}
(we interpret the latter condition as empty when $n'=1$). 
Define the sets $G^1_m,G^2_m \subseteq [0,1]$ as
\[
G^1_m:= \{ a: \; 		d_{\TT^2}( t w_{n'} \xi_{m'+1,1} + a \xi_{m'+1, n'}, x^{n'}_{\xi_{m,1}}(t) ) \ge c_0 \quad \text{for every $t>0$}\}
\]
\[
G^2_m:= \{ a: \; d_{\TT^2} ( t w_{n'} \xi_{m'+1,1} + a \xi_{m'+1, n'}, x^{n'- 1}_{\xi_{m, 2}}(t)) \ge \frac{c_0}{n'}
\quad\text{for every $t>0$} \}\, .
\]
Finally, let $G := G^1_1\cap \ldots \cap G^1_{m'}\cap G^2_1 \cap \ldots \cap G^2_{m_0}$ be their intersection.
Note that the existence of 
$
a_{m'+1, n'} 
$ is equivalent to the fact that $G$ is not empty. According to Lemma \ref{lemma:disjointsupportdim2}(i) $\Leb 1 ([0,1]\setminus G^1_m) \leq C  c_0$ for every $m\in \{1, \ldots , m'\}$, while according to Lemma \ref{lemma:disjointsupportdim2}(ii) $\Leb 1 ([0,1]\setminus G^2_m)\leq C { \bar C} c_0$ for every $m\in \{1, \ldots , m_0\}$. In particular, by our choice of $c_0$ in \eqref{e:choice_c_0} we have $\Leb 1 ([0,1]\setminus G) \leq C ( { \bar C} m_0 +m') c_0 \leq 2C \bar C m_0 c_0 <1$, which in turn implies that $G$ is not empty and completes the proof. 
\end{proof}

\subsection{Suitable discretized speeds} Clearly we cannt apply the geometric arrangement of the previous section if we choose $v_n =n$ for the values of the parameter $\sigma$ since the rations of $\mu_{q+1}^{d/p'} \left(\frac{n}{\kappa}\right)^{1/p'}$ are not even guaranteed to be rational. The aim of the next lemma is to show that it suffices to perturb $\{n\}_{n\in \mathbb N\setminus \{0\}}$ slightly to a new sequence $\{v_n\}_{n\in \mathbb N\setminus \{0\}}$ in order to achieve that the $\frac{w_{n+1}}{w_n}$ are rational numbers with a denominator which is not too large (in fact comparable to $n$). 

\begin{lemma}\label{lemma:velocitiesdim2}
	Fix $1\le p' < \infty$. Then there exist $\bar C>0$, $\bar N\in \NN$ depending only on $p'$, functions $A: \NN\setminus \{ 0,1 \}\to \{0,\bar N\}$, $N: \NN\setminus \{ 0,1 \}\to \NN\setminus \{ 0 \}$ and $v: \NN\setminus \{ 0 \}\to \RR^{+}$ such that, for every $n\ge 1$ the following holds:
	\begin{itemize}
		\item[(i)] $|v_n-n|\le 1$;
		\item[(ii)] $ \left( \frac{ v_n}{ v_{n-1} } \right)^{1/p'} = 1 + \frac{A_n}{N_n}$;
		\item[(iii)] $N_n< \bar C n$.
	\end{itemize}
\end{lemma}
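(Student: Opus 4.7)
My plan is to fix a large integer $\bar N$ depending on $p'$ and construct the sequence $\{v_n\}$ inductively via $v_n := v_{n-1}(1+\bar N/N_n)^{p'}$, choosing $N_n\in \NN_{\ge 1}$ greedily so that $v_n$ lands in the window $[n-1,n+1]$. Feasibility rests on a density observation: for $N$ of order $p'\bar N n$, the values $(1+\bar N/N)^{p'}$ decrease with consecutive gaps of order $1/n^2$, while the target window for the ratio $v_n/v_{n-1}$ has width of order $1/n$, so a suitable integer $N_n$ in this range always exists.

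Concretely, I would set $v_1:=1$, and for $n\ge 2$ take $A_n:=\bar N$ together with
\[
N_n\,:=\,\min\bigl\{N\in\NN_{\ge 1} : v_{n-1}(1+\bar N/N)^{p'}\le n+1\bigr\},\qquad v_n:=v_{n-1}(1+\bar N/N_n)^{p'}.
\]
Property (ii) is then immediate, and the upper bound $v_n\le n+1$ in (i) is built in. For the lower bound in (i), the minimality of $N_n$ forces $\phi(N_n-1)>n+1$, writing $\phi(N):=v_{n-1}(1+\bar N/N)^{p'}$, hence
\[
v_n\;>\;n+1 - v_{n-1}\bigl[(1+\tfrac{\bar N}{N_n-1})^{p'}-(1+\tfrac{\bar N}{N_n})^{p'}\bigr]\;\ge\;n+1 - \tfrac{C(p',\bar N)\,v_{n-1}}{N_n^2}
\]
by the mean value theorem. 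Together with the estimate $N_n\gtrsim n$ and the inductive bound $v_{n-1}\le n$, this gives $v_n\ge n-1$ for every $n$ beyond a $p'$-dependent threshold $n_0$.

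For (iii), I would use $\phi(N_n-1)>n+1$ and $v_{n-1}\le n$ to obtain $\bar N/(N_n-1)>((n+1)/n)^{1/p'}-1\ge c(p')/n$, whence $N_n\le \bar C(p',\bar N)n$. The analogous computation based on $\phi(N_n)\le n+1$ and $v_{n-1}\ge n-2$ yields the matching lower bound $N_n\gtrsim n$ used in the previous paragraph. The finitely many small indices $n<n_0$ where the asymptotic inequalities fail can be dealt with by taking $\bar N$ large enough that already $N_n=1$ lies in the admissible set, or by invoking the alternative $A_n=0$ when $v_{n-1}\in[n-1,n]$. The only delicate point is the uniform quantitative control of the Taylor expansion $(1+\bar N/N)^{p'}=1+p'\bar N/N+O(1/N^2)$ in the relevant range of $N$; I do not anticipate any serious obstacle beyond bookkeeping for the implicit constants.
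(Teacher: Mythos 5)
Your construction is, at heart, the same as the paper's: both arguments proceed by induction, take $A_n=\bar N$ with $\bar N=\bar N(p')$ large, and force $N_n$ to be an integer of size comparable to $\bar N p' n$; the only real difference is which quantity gets quantized. The paper keeps $N_{n}$ exactly integral by letting the new value $v$ range over a real interval of length one and applying the intermediate value theorem to $f(t)=\bar N\bigl[(t/v_{n-1})^{1/p'}-1\bigr]^{-1}$, whose variation over that interval exceeds $1$ once $\bar N(p')$ is large, together with a lazy branch ($A=0$, $v$ unchanged) when $v_{n-1}\ge n$; you instead pick the integer $N_n$ greedily and control the displacement of $v_n$ by the mean value theorem. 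Your quantitative steps are sound: $\phi(N_n)\le n+1$ and $v_{n-1}\ge n-2$ give $N_n\ge \bar N p'(n-2)/3$, so the MVT gap is of order $C(p')/\bar N$ \emph{uniformly} in $n$ (not merely for $n$ large), while minimality together with $v_{n-1}\le n$ gives $N_n\le \bar C(p')\,n$, which is (iii). The one statement that is wrong is your escape route for small indices: enlarging $\bar N$ makes $N_n=1$ \emph{less} admissible, not more, since it inflates $v_{n-1}(1+\bar N)^{p'}$ (indeed for $\bar N\ge 2$ the value $N=1$ is never admissible, which is what legitimizes the minimality step $\phi(N_n-1)>n+1$), and the $A_n=0$ fallback is only available when $v_{n-1}\ge n-1$. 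Fortunately no fallback is needed: because $N_n\gtrsim \bar N p' n$ and $v_{n-1}\le n$, the gap bound already yields $v_n\ge n-1$ for every $n\ge 3$ once $\bar N(p')$ is fixed large, and $n=2$ is immediate since the ratio $(1+\bar N/N_2)^{p'}\ge 1$ forces $v_2\in[1,3]$; so your threshold $n_0$ and the flawed remark can simply be deleted and the argument closes.
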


\begin{proof}
We prove the statement by induction on $n$. For $n=1$ and $n=2$ we set $v_1=v_2=1$, $A_2=0$, $N_2= 1$, and the statement is satisfied. Suppose now that the claim is verified for $\bar n$. If $v_{\bar n}\geq \bar n$, we set $v_{\bar n+1}= v_{\bar n}$, $A_{\bar n+1} =0$, $N_{\bar n+1} = N_{\bar n}$ and the claim is verified. Hence we can assume that $v_{\bar n}< \bar n$.
We claim that we can choose $A_{\bar n+1} = \bar N$,   $v_{\bar n+1}$ and $ N_{\bar n+1}$ with 
\begin{equation}
\label{eqn:choice}
v_{\bar n+1} \in [\bar n+1,\bar n+2] \qquad \mbox{and} \qquad 
 N_{\bar n+1}= \bar N \Big[\Big(\frac{v_{\bar n+1}}{v_{\bar n}} \Big)^{1/p'} -1 \Big]^{-1}
\in \NN.
\end{equation}
Indeed, consider the continuous, decreasing function
$$f(t) =\bar N \Big[\Big(\frac{t}{v_{\bar n}} \Big)^{1/p'} -1 \Big]^{-1};$$
it is enough to show that
$$1 \leq f({\bar n+1}) - f(\bar n+2) = \bar N v_{\bar n}^{1/p'} \frac{(\bar n+2)^{1/p'}-_{\bar n+1}^{1/p'}}{(({\bar n+1})^{1/p'}- v_{\bar n}^{1/p'} ) ((\bar n+2)^{1/p'} - v_{\bar n}^{1/p'} )}=:\bar N g(\bar n, v_{\bar n})$$
 to find 
$t\in [\bar n +1,\bar n +2]$ such that $f(t) \in \NN$.
Since the function $g(\bar n, v_{\bar n})$ is increasing with respect to the variable $v_{\bar n}$,
$$ \bar N g(\bar n, v_{\bar n}) \geq \bar N g(\bar n, \bar n-1) = \bar N  \Big[ \Big(\Big(1+\frac{2}{\bar n-1} \Big)^{1/p'} -1 \Big)^{-1}- \Big(\Big(1+\frac{3}{\bar n-1} \Big)^{1/p'} -1 \Big)^{-1} \Big]
$$
We finally choose $\bar N:=\bar N (p')$ in such a way that $\inf_{\bar n \ge 2} g(\bar n, \bar n-1) \geq \bar N^{-1}$; we notice that this infimum is positive since the function $ g(\bar n, \bar n-1)$ is positive for every $\bar n\ge 2$ and, by a simple Taylor expansion, it grows linearly as $\bar n\to \infty$. This proves the claim \eqref{eqn:choice}.

With this choice of $v_{\bar n+1}$ and $N_{\bar n+1}$, 
 recalling also that $v_{\bar n}< \bar n$, we get that the statement (iii) is satisfied
$$
N_{\bar n+1}\leq   \bar N \Big[\Big(1+\frac{1}{\bar n} \Big)^{1/p'} -1 \Big]^{-1} \leq \bar C \bar n.\qedhere
$$
\end{proof}

\subsection{Disjointness of the supports}
Set $w_n:= \mu_{q+1}^{d/p'} \left(\frac{v_n}{\kappa}\right)^{1/p'}$ where $\{v_n\}_{n\ge 1}$ is given by Lemma \ref{lemma:velocitiesdim2}. We apply Proposition \ref{prop:disjointsupportdim2} to $\Lambda^1$, $\Lambda^2$ and $\{ w_n \}_{n\ge 1}$ (notice that the assumptions are satisfied in view of Lemma \ref{lemma:velocitiesdim2}) obtaining the family $\{ a_{\xi, n}:\, \xi\in \Lambda^{[n]} \}$. 
Finally, starting from the building blocks introduced in Section~\ref{sec:build} we define
\begin{equation}
	W_{\xi, \mu_{q+1}, v_n/\kappa}( t , x ):=\tilde W_{\xi, \mu_{q+1}, v_n/\kappa}( t, x-a_{\xi,n}\xi),
	\quad
	\Theta_{\xi, \mu_{q+1}, v_n/\kappa}(t, x):=\tilde \Theta_{\xi, \mu_{q+1}, v_n/\kappa}(t, x-a_{\xi,n} \xi),
\end{equation}
for any $n\ge 1$ and $\xi\in \Lambda^{[n]}$, as already explained.

We therefore now show \eqref{eq:disjointsupportsdim2}, namely that
\begin{equation}\label{eq:disjointsupportsdim2-bis}
	W_{\xi, \mu_{q+1}, v_n/\kappa} \cdot \Theta_{\xi', \mu_{q+1}, v_m/\kappa}( t , x )=0
	\quad \text{for any $(x,t)\in \TT^2\times \RR^+$,}
\end{equation}
whenever $\xi\neq \xi'$, $|n-m|\le 1$ and $n,m\le  C  \lambda_{q}^{d(1+\alpha)+1}$.

Indeed for any fixed $t\ge 0$ one has the inclusions 
\begin{equation}
     \supp W_{\xi, \mu_{q+1}, v_n/\kappa}( t , \cdot) \subset B_{2\rho\mu^{-1}_{q+1}}( t w_n \xi + a_{\xi, n}\xi ),
     \quad
     \supp \Theta_{\xi', \mu_{q+1}, v_m/\kappa}( t , \cdot) \subset B_{\rho\mu^{-1}_{q+1}}( t w_m \xi' + a_{\xi', m} \xi'),
\end{equation}
hence we just need to check that
$B_{2\rho\mu^{-1}_{q+1}}( t w_n \xi + a_{\xi, n}\xi )\cap B_{\rho\mu^{-1}_{q+1}}( t w_m \xi' + a_{\xi', m} \xi')= \emptyset$.
Proposition \ref{prop:disjointsupportdim2} guarantees 
\[
d_{\TT^2}(t w_n \xi + a_{\xi, n}\xi,  t w_m \xi' + a_{\xi', m} \xi')\ge \frac{c_0}{n},
\]
hence the claim is proven provided 
\begin{equation}\label{zz3}
	\frac 34 \mu_{q+1}^{-1} \le \frac{c_0}{C \lambda_{q}^{d(1+\alpha)+1}}
	.
\end{equation}
The proof of \eqref{zz3} follows from the choice of $\mu_{q+1} = \lambda_q^{b\gamma}$, and since $\gamma>1$, $b>d(1+\alpha)+1$.

\subsection{{Proof of the Proposition~\ref{prop:inductive} in the case d=2}}
The estimates up to Section~\ref{sec:end} are done in the same exact way, up to observing that $v_n/\kappa$ is comparable to $n/\kappa $ up to a factor $2$.
In Section~\ref{sec:newerror}, we compute in \eqref{eqn:formulone} the product of $\theta_{q+1}^{(p)}$ and $ w_{q+1}^{(p)} $ in order to see the cancellation of the old error $R_\ell$;
now it has the expression
\begin{equation}
	\theta_{q+1}^{(p)} w_{q+1}^{(p)} = \sum_{n\ge 12} \sum_{\xi\in \Lambda^{[n]}} \chi(\kappa|R_{\ell}|-n) a_{\xi}\left(\frac{R_\ell}{|R_\ell|}\right) \Theta_{\xi, \mu_{q+1}, v_n /\kappa} W_{\xi, \mu_{q+1}, v_n/\kappa}(\lambda_{q+1}t, \lambda_{q+1} x)
\end{equation}
as a consequence of \eqref{eq:disjointsupportsdim2}, \eqref{remark:sum is finite}, the fact that $\chi\cdot \bar \chi= \chi$ and $\chi ( \kappa | R_{\ell} | - n ) \cdot \chi( \kappa| R_{\ell}| - n )=0$ when $|n-m|>1$.

Since the average of $\Theta_{\xi, \mu_{q+1}, n /\kappa}W_{\xi, \mu_{q+1}, n /\kappa}$ which appears from the forth line of formula \eqref{eqn:formulone}, in the definition of $R^{quadr}$ and in $m$ is now ${v_n}/{\kappa}\xi$ rather than $n/\kappa\xi$, the definition of $\tilde R_\ell$ should now be replaced by
$$
\tilde R_\ell:= 
\sum_{n\ge \nop} \chi(\kappa |R_\ell|- n)	\frac{R_\ell}{|R_\ell|} \frac{v_n}{\kappa},
$$
and the obvious modification takes place for the definition of $R^{quadr}$ and $m$.
The estimate \eqref{eqn:tildeRestimate} now works analogously to give $|R_\ell- \tilde R_\ell| \leq \frac{17
}{20
} \delta_{q+2}$.
The rest of the estimates work as in Sections~\ref{sec:newerror} and~\ref{sec:highderiv}.

 \section{Proof of the ill-posedness theorems}
\subsection{Proof of Theorem~\ref{thm:main-CE}}

Without loss of generality we assume $T=1$.
Let $\alpha,b, a_0, M>5$, $\beta>0$ be fixed as in Proposition~\ref{prop:inductive}. Let $a \geq a_0$ be chosen such that
$$ \sum_{q=0}^\infty \delta_{q+1}^{1/p} \leq \frac 1 {32M}.$$
Let $\chi_0$ be a smooth time cutoff which equals $1$ in $[0,1/3]$ and $0$ in $[2/3,1]$, 

We set $ \lambda = 20a$ and define the starting triple $(\rho_0,  u_0, R_0)$ of the iteration as follows:
\[
\rho_0 = \chi_0(t) + \Big(1+\frac{\sin( \lambda x_1)}{4}\Big) (1- \chi_0(t)), \qquad u_0 = 0, \qquad R_0 =- \partial_t \chi_0\frac{ \cos (\lambda x_1)}{4 \lambda} e_1\, .
\]
Simple computations show that the tripe enjoys \eqref{eqn:CE-R} with $q=0$. Moreover $\|R_0\|_{L^1} \leq C \lambda^{-1} = C \lambda_0^{-1}$ and thus \eqref{eqn:ie-1} is satisfied because $2\beta <1$ (again we need to assume $a_0$ sufficiently large to absorb the constant). Next $\|\partial_t \rho_0\|_{C^0}+\|\rho_0\|_{C^1}\leq C \lambda \leq C \lambda_0$. Since $u_0\equiv 0$ and $\alpha >1$, we conclude that \eqref{eqn:ie-2} is satisfied as well. 

Next use Proposition~\ref{prop:inductive} to build inductively $(\rho_q, u_q, R_q)$ for every $q\geq 1$. The sequence $\{\rho_q\}_{q\in \NN}$ is Cauchy in $C (L^{p})$ and we denote by $\rho \in  C ([0,1], L^{p})$ its limit.
Similarly the sequence of divergence-free vector fields $\{u_q\}_{q\in \NN}$ is Cauchy in $C([0,1], L^{p'})$ and $C([0,1], W^{1,r})$; hence, we define $u \in  C^0 ([0,1], L^{p'}\cap W^{1,r})$ as its (divergence-free) limit.

Clearly $\rho$ and $u$ solve the continuity equation and $\rho$ is nonnegative on $\TT$ by
\[
\inf_{\TT} \rho \geq \inf \rho_0 + \sum_{q=0}^\infty  \inf (\rho_{q+1} - \rho_q) \geq \frac 34 - \sum_{q=0}^\infty \delta_{q+1}^{1/p} \geq \frac 14\, .
\]
Moreover, $\rho$ does not coincide with the solution which is constantly $1$, because
\[
\|\rho - 1\|_{L^p} \geq \|1-\rho_0\|_{L^p} - \sum_{q=0}^\infty  \|\rho_{q+1}-\rho_q\|_{L^p}  \geq  \frac 1 {16}- M \sum_{q=0}^\infty \delta_{q+1} >0.
\]
Finally, since $\rho_0(t, \cdot) \equiv 1 $ for $t\in [0,1/3]$, point (c) in Proposition~\ref{prop:inductive} ensures that  $\rho (t, \cdot) \equiv 1$ for every $t$ sufficiently close to $0$.

\subsection{Proof of Theorem~\ref{thm:main-flow}}
We first recall a general fact: if $u$ is an everywhere defined Borel vector field in $L^1([0,T] \times \TT; \RR^d)$ such that, for a.e. $x \in \TT$, the integral curve starting from $x$ is unique, then the corresponding continuity equation is well posed in the class of nonnegative, $L^1([0,T] \times \TT)$ solutions for any $L^1$ initial datum. 

Indeed, Ambrosio's superposition principle (see e.g. \cite[Theorem 3.2]{Luigi-CIME}) guarantees that each nonnegative, $L^1([0,T] \times \TT)$ solution is transported by integral curves of the vector field, namely (no matter how the Borel representative is chosen) there is a probability measures $\eta$ on the space of absolutely continuous curves, supported on the integral curves of the vector field in the sense of Definition~\ref{defn:int-curve},  such that $\rho(t,x)\, \mathscr{L}^d = (e_t)_\# \eta$ for a.e. $t\in [0,T]$. Let us consider the disintegration $\{\eta_x\}_{x \in \TT^d}$ of $\eta$ with respect to the map $e_0$, which is $\rho_0$-a.e. well defined; since by assumption for a.e. $x \in \TT$, the integral curve starting from $x$ is unique (and hence coincides with the regular Lagrangian flow), we deduce that $\eta_x$ is a Dirac delta on the curve $t \to X(t,x)$ and consequently $\rho(t,\cdot ) \mathscr{L}^d = X(t,\cdot)_\# (\rho_0 \mathscr{L}^d)$. This concludes the proof of the first claim.

Let $u$ be the vector field given by Theorem~\ref{thm:main-CE} and observe that the Cauchy problem for the continuity equation \eqref{eqn:CE} from the initial datum $\rho_0 \equiv 1$ has two different nonnegative solutions in $[0,T]$: $\rho^{(1)} \equiv 1$ and the nonconstant solution $\rho^{(2)}$ given by Theorem~\ref{thm:main-CE}. Hence, by the previous observation we conclude that there exists a set of initial data of positive measure such that the corresponding integral curves are nonunique. Since the fact that the two functions are distinct solutions of the continuity equation is independent of the pointwise representative chosen for the vector field, this completes the proof of Theorem \ref{thm:main-flow}.

\section{Asymmetric Lusin-Lipschitz estimates}

\subsection{Proof of Proposition \ref{prop:LusinLipschitzasimmetrica}}
We will prove the inequality up to constants and we assume $\alpha\ge 1/d$, since for any $\alpha\in (0,1/d)$ a simple application of the Young inequality gives
\begin{equation*}
	g(x)+g(x)^{\frac{1}{d}}g(y)^{1-\frac{1}{d}}
	=g(x)+g(x)^{\frac{1-\alpha d}{(1-\alpha)d}}\left(g(x)^{\frac{\alpha(d-1)}{(1-\alpha)d}} g(y)^{\frac{d-1}{d}} \right)
	\le C({\alpha,d})( g(x)+g(x)^{\alpha}g(y)^{1-\alpha}).
\end{equation*}
We next introduce the following localized Hardy Littlewood maximal function: regarding any integrable function $f:\TT\to\RR$ as a periodic function on $\mathbb R^3$, we set
\[
M f (x):= \sup_{0\leq R \leq 3} \frac{1}{R^d}\int |f| (z)\, dz\, .
\]
We will show below that the conclusion of the proposition hold for
\[
g (x) := (M|Du|^q)^{1/q}\, 
\]
where $q=\alpha d$. In particular the map $L^r \ni Du \mapsto g\in L^r$ is continuous. 

Note first that it suffices to prove the estimate for $x,y\in \{g<\infty\} \subset \{M|Du|<\infty\}$. On $\{g = \infty\}$ we can arbitrarily define $u$ to be $0$: this will not matter for our purposes because when one of the two points $x,y$ belong to $\{g = \infty\}$ the right hand side of \eqref{eq:LusinLipparzialmenteAsimmetrica} is infinite, making the inequality trivial. On $\{g<\infty\}$ we wish instead to define $u$ everywhere in a sensible way. We fix thus a smooth convolution kernel $\varphi$ supported in the ball of radius 1, assume $x\in \{g<\infty\}$ and consider $u_k:= u * \varphi_{2^{-k}}$. Recalling the Poincar\'e inequality
\[
\frac{1}{2^{kd}}\int_{B_{2^{-k}} (x)} |u(z) - u_k (x)|\, dz \leq C 2^{-k} M|Du (x)| \leq C 2^{-k} g(x)\, 
\]
(where the constant depends on $\varphi$) we infer $|u_{k+1} (x) - u_k (x)|\leq C 2^{-k} g (x)$.
This implies that $\{u_k (x)\}_k$ is a Cauchy sequence and has a limit: we define then $u(x)$ to be such limit.

We next fix $x,y\in \{g<\infty\}$, regard $u$ as a periodic function defined on the whole $\RR^d$ and set $R:= |x-y|$. W.l.o.g. $R\leq 1$. Moreover we recall the classical inequality
	\begin{equation}\label{eq:well-known}
	|u(x)-u(y)|\le C(d)
	\left( \int_{B_{R}(x)} \frac{|D u(z)|}{|x-z|^{d-1} }\, dz+ \int_{B_{R}(y)} \frac{|D u(z)|}{|x-z|^{d-1} }\, dz  \right)\, .
	\end{equation}
When $u\in C^1$ we refer the reader to \cite[Lemma 3.1]{J10} for a proof. Otherwise, the inequality can be validated passing to the limit on the respective ones for the approximating functions $u_k$'s (using that $\lim_k u_k (x) = u(x)$, $\lim_k u_k (y) = u(y)$ and standard facts about convolutions). 
A classical telescoping argument gives next
	\begin{equation}\label{eq:telescopingargument}
	\int_{B_R(x)} \frac{|D u(z)|}{|x-z|^{d-1} }\, dz
	=\sum_{k=0}^{+\infty} \int_{B_{2^{-k}R}(x)\setminus B_{2^{-k-1}R}(x)} \frac{|D u(z)|}{|x-z|^{d-1}}\, dz
	\le C(d) R M |D u|(x)\, .
	\end{equation}
	Recall that $\alpha \in (1/d,r/d)$ and fix $\eps\in(0,1]$ to be chosen later.
	We write
	\begin{equation}\label{eq:split}
	\int_{B_{R}(y)} \frac{|D u(z)|}{|y-z|^{d-1} }\, dz 
	=\int_{B_{R}(y)\setminus B_{\eps R}(y)} \frac{|D u(z)|}{|y-z|^{d-1} }\, dz 
	+\int_{B_{\eps R}(y)} \frac{|D u(z)|}{|y-z|^{d-1} }\, dz
	:= I+II.
	\end{equation}
	Let us study $I$ and $II$ separately. Using the H\"older inequality we get
	\begin{align*}
	I & \le \left( \int_{B_{R}(y)} |D u(z)|^q \, d z\right)^{1/q} \left( C(d)\int_{\eps R}^{R} \frac{1}{s^{(d-1)q'-d+1}}  \, ds\right)^{1/q'}\\
	&\le C({d,q}) R^{d/q}
	\left( \frac{1}{R^d} 
	\int_{B_{3R}(x)} |D u(z)|^q \, d z
	\right)^{1/q} \frac{1}{(R\eps)^{d-1-d/q'}}\\
	&\le C({d,q}) R g(x)\eps^{d/q'-d+1}=C({d,q}) R g(x)\eps^{-d/q+1},
	\end{align*}
	where $\frac{1}{q'}=1-\frac{1}{q}$.
	For what concerns $II$, we argue as in \eqref{eq:telescopingargument} getting
	\begin{equation*}
	II \le C(d) R \eps M |D u| (y)\le C({d})R \eps g(y).
	\end{equation*}
	Putting the two estimates together and choosing 
	\begin{equation*}
	\eps =
	\begin{cases}
	\left(\frac{g(x)}{g(y)}\right)^{q/d} & \text{for}\ g(y)\ge g(x)\\
	1& \text{otherwise}
	\end{cases}
	\end{equation*} 
	we obtain
	\begin{equation*}
	\int_{B_{R}(y)} \frac{|D u(z)|}{|y-z|^{d-1} } \, dz	
	\le C({d,\alpha}) R (g(x)+ g(x)^{\alpha}g(y)^{1-\alpha}),	
	\end{equation*}
	that along with \eqref{eq:telescopingargument} gives the sought conclusion.

\subsection{A second version of the asymmetric Lusin-Lipschiz} A simple application of the Young inequality gives the following linear version of \eqref{eq:LusinLipparzialmenteAsimmetrica}.
\begin{corol}\label{cor:LusinLipsLinearversion}
	Let $u\in W^{1,r}(\TT)$ for some $1<r\le d$. Then,
	for any $q\in [r, r\frac{d-1}{d-r})$ there exist a positive constant $C:= C({r,d,q})$ and nonnegative functions $a\in L^1$ and $b\in L^q$ satisfying
	\begin{equation}\label{eq:z}
	\|a\|_{L^1} \le C\|D u\|_{L^r}
	\quad \text{and}\quad
	\|b\|_{L^q}\le C \|D u\|_{L^r},
	\end{equation}
	together with
	\begin{equation*}
	|u(x)-u(y)|\le |x-y|(a(x)+b(y))
	\quad\text{for any $x,y\in \RR^d\setminus N$}.
	\end{equation*}
	Moreover, we can take $N=\emptyset$ provided we choose a suitable representative of $u$, $a$ and $b$ and the latter can be selected so that the respective map $W^{1,r} \ni u \to (a,b)\in L^1\times L^q$ is continuous. 
\end{corol}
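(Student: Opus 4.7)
\subsection*{Proof plan for Corollary \ref{cor:LusinLipsLinearversion}}

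The plan is to start from Proposition \ref{prop:LusinLipschitzasimmetrica}, which gives $g\in L^r$ with $\|g\|_{L^r}\le C\|Du\|_{L^r}$ and a negligible set $N$ outside of which
\begin{equation*}
|u(x)-u(y)|\le |x-y|\bigl(g(x)+g(x)^\alpha g(y)^{1-\alpha}\bigr).
\end{equation*}
The term $g(x)$ already has the desired product form and will be absorbed into $a(x)$. The only task is therefore to split the bilinear term $g(x)^\alpha g(y)^{1-\alpha}$ into a sum $a'(x)+b(y)$ via a \emph{weighted} Young inequality. The $q=r$ endpoint is trivial and follows directly from the symmetric Lusin-Lipschitz estimate \eqref{e:simmetrica}, so I may assume $q\in(r,r(d-1)/(d-r))$ and set $\alpha:=\frac{q-r}{q-1}$, which lies in $(0,r/d)$ precisely when $q$ lies in this interval.

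Given conjugate exponents $p_1=r/\alpha$ and $p_2=r/(r-\alpha)$, Young's inequality applied to $(cg(x)^\alpha)\cdot(g(y)^{1-\alpha}/c)$ yields
\begin{equation*}
g(x)^\alpha g(y)^{1-\alpha}\le \frac{c^{p_1}g(x)^{r}}{p_1}+\frac{g(y)^{(1-\alpha)p_2}}{c^{p_2}p_2},
\end{equation*}
where $c>0$ is a free scalar to be tuned. With these exponents one checks that $(1-\alpha)p_2\cdot q=r$, so that the right-hand side is a sum of a function of $x$ which is a pure power of $g$ and a function of $y$ which is also a pure power of $g$, and both powers land in $L^1$ and $L^q$ respectively whenever $g\in L^r$. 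This is precisely why the range $q<r(d-1)/(d-r)$ (corresponding to $\alpha<r/d$) is inherited from Proposition \ref{prop:LusinLipschitzasimmetrica}.

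The key subtlety, and the main obstacle, is that a naive choice like $c=1$ produces estimates of the form $\|a\|_{L^1}\lesssim\|g\|_{L^r}^r$, which is polynomial rather than linear in $\|Du\|_{L^r}$. To restore homogeneity one picks $c$ depending on the norm $\lambda:=\|g\|_{L^r}$; concretely $c=\lambda^{\alpha(1-r)/r}$ makes both terms scale linearly in $\lambda$. Indeed, with this choice one computes directly that the $L^1$-norm of $c^{p_1}g(x)^r/p_1$ equals $\lambda/p_1$, while the $L^q$-norm of $g(y)^{(1-\alpha)p_2}/(c^{p_2}p_2)$ equals $\lambda/p_2$; the matching of the two scaling exponents in $\lambda$ is exactly the algebraic identity $(r-q)/(qp_2)=\alpha(1-r)/r$, which follows from $q=(r-\alpha)/(1-\alpha)$ and $p_2=r/(r-\alpha)$. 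Setting
\begin{equation*}
a(x):=g(x)+\frac{\lambda^{1-r}g(x)^r}{p_1},\qquad b(y):=\frac{g(y)^{(1-\alpha)p_2}}{c^{p_2}p_2},
\end{equation*}
then gives $\|a\|_{L^1}\le\|g\|_{L^1}+\lambda/p_1\le C\|Du\|_{L^r}$ (using $\|g\|_{L^1}\le\|g\|_{L^r}$ on $\TT$) and $\|b\|_{L^q}\le C\|Du\|_{L^r}$. The continuous selection $u\mapsto(a,b)$ is inherited from the continuous selection $u\mapsto g$ provided by Proposition \ref{prop:LusinLipschitzasimmetrica}, composed with the continuous maps $g\mapsto\|g\|_{L^r}$ and $g\mapsto g^s$ on $L^r$; the choice $N=\emptyset$ follows from choosing the pointwise representatives of $u$ and $g$ supplied by the proposition.
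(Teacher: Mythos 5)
Your proposal is correct and follows exactly the route the paper intends: the paper derives Corollary \ref{cor:LusinLipsLinearversion} from Proposition \ref{prop:LusinLipschitzasimmetrica} by ``a simple application of the Young inequality'' without spelling out details, and your choice $\alpha=\frac{q-r}{q-1}$ with exponents $p_1=r/\alpha$, $p_2=r/(r-\alpha)$ is precisely the computation behind that remark. Your extra care with the scalar $c=\lambda^{\alpha(1-r)/r}$ to get a bound linear (rather than polynomial) in $\|Du\|_{L^r}$, and the separate treatment of the endpoint $q=r$ via \eqref{e:simmetrica}, correctly fill in details the paper leaves implicit.
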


We will now show that the range of exponents above is optimal. First of all we prove the following simple proposition.

\begin{propos}\label{prop:exampleLusin}
	Let $d\ge 2$, $0<\beta<d$ and $q>1$. If there exist $g\in L^1(B_1)$ and $h\in L^q(B_1)$ such that
	\begin{equation}\label{z}
	\left|\frac{1}{|x|^{\beta}}-\frac{1}{|y|^{\beta}}\right|
	\le |x-y| (g(x)+h(y))
	\quad\forall x,y\in B_1\subset \RR^d,
	\end{equation}
	then $q\le \frac{d-1}{\beta}$.
\end{propos}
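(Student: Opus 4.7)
My plan is to argue by contradiction: assuming $q > (d-1)/\beta$, I will derive a pointwise lower bound on $g$ near the origin that is incompatible with $g \in L^1(B_1)$. The mechanism is to test \eqref{z} with $y$ very close to $0$ and $x$ bounded away from $0$, so that the singularity $u(y)=|y|^{-\beta}$ forces the difference quotient to be of order $|y|^{-\beta}/|x|$. Elementary estimates show that for $|y|\le|x|/2$ one has $|y|^{-\beta}-|x|^{-\beta}\ge(1-2^{-\beta})|y|^{-\beta}$ and $|x-y|\le\tfrac{3}{2}|x|$, so \eqref{z} yields
\[
g(x)\;\ge\; c_{\beta}\,\frac{|y|^{-\beta}}{|x|}\;-\;h(y)\qquad\text{for a.e. } y\in B_{|x|/2},
\]
at every $x$ (a.e.) with $g(x)<\infty$.

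The crux is to use $h\in L^q$ to locate, inside a suitably chosen ball $B_\delta$ with $\delta\le|x|/2$, a point $y_0$ on which $h(y_0)\le c_\beta\delta^{-\beta}/(2|x|)$. Chebyshev bounds the measure of the ``bad'' set $\{y\in B_\delta:\,h(y)>c_\beta\delta^{-\beta}/(2|x|)\}$ by $C|x|^q\delta^{q\beta}\|h\|_{L^q}^q$, and forcing this quantity to be strictly less than $(1-2^{-d})|B_\delta|$---a condition whose right-hand side is positive precisely because $d\ge 2$---gives $\delta^{d-q\beta}\gtrsim |x|^q$. In the main range $q\beta<d$ this determines the scale $\delta_{*}\sim |x|^{q/(d-q\beta)}$; the check $\delta_{*}\le|x|/2$ for small $|x|$ uses that $q>(d-1)/\beta$ together with $q>1$ imply $q>d/(1+\beta)$ (splitting on $\beta\lessgtr d-1$). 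Since $|B_{\delta_{*}/2}|=2^{-d}|B_{\delta_{*}}|<\tfrac12|B_{\delta_{*}}|$, the complementary good set cannot be contained in $B_{\delta_{*}/2}$ and therefore contains a point $y_0$ with $|y_0|\ge\delta_{*}/2$; applied to this $y_0$, the displayed inequality above gives
\[
g(x)\;\ge\; c\,\delta_{*}^{-\beta}/|x|\;\gtrsim\;|x|^{-d/(d-q\beta)}.
\]

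To finish, I would integrate this lower bound over a neighbourhood of $0$: the power $|x|^{-d/(d-q\beta)}$ fails to be locally $L^1$ as soon as $d/(d-q\beta)\ge d$, equivalently $q\ge(d-1)/\beta$, so the assumption $q>(d-1)/\beta$ forces $\int g=\infty$, contradicting $g\in L^1$. In the residual range $q\beta\ge d$ the constraint $\delta^{d-q\beta}\gtrsim|x|^q$ is even satisfied for all sufficiently small $\delta$, and sending $\delta\to 0$ in the same argument produces $g(x)=+\infty$ at a good $x$---again a contradiction. The delicate technical point is the extraction of a point $y_0$ in the good set with $|y_0|\ge\delta/2$, which is what both leverages $d\ge 2$ (via $2^{-d}<\tfrac12$) and pins down the sharp exponent $(d-1)/\beta$; the elementary estimates on the difference quotient and the Chebyshev step are routine.
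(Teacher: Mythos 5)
Your argument is correct, but it takes a genuinely different route from the paper. The paper tests \eqref{z} along the one-parameter family $y=x|x|^{\alpha}$, divides by $|x|(1-|x|^{\alpha})$, integrates over $B_{1/2}$, and after a change of variables applies H\"older to see that $\int h(y)|y|^{-d\alpha/(\alpha+1)}\,dy$ is finite whenever $\alpha<q-1$; this forces $\int_{B_{1/2}}|x|^{-\beta(\alpha+1)-1}\,dx<\infty$, hence $\beta(\alpha+1)<d-1$ for all $\alpha<q-1$, and letting $\alpha\uparrow q-1$ gives $q\le\frac{d-1}{\beta}$. You instead argue by contradiction with a pointwise selection: for each small $x$ you use Chebyshev at the tuned scale $\delta_*\sim|x|^{q/(d-q\beta)}$ to find $y_0\in B_{\delta_*}$ with $h(y_0)\le c_\beta\delta_*^{-\beta}/(2|x|)$, deduce $g(x)\gtrsim|x|^{-d/(d-q\beta)}$, and contradict $g\in L^1$; the checks you flag ($q>d/(1+\beta)$ so that $\delta_*\le|x|/2$, and the residual case $q\beta\ge d$ giving $g\equiv+\infty$) are exactly the ones needed and they go through. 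The paper's computation is shorter and avoids the case split, while your argument is more elementary (no Jacobian/H\"older duality) and yields extra quantitative information, namely a pointwise blow-up rate for $g$ near the origin. One small inaccuracy in your commentary: the extraction of $y_0$ with $|y_0|\ge\delta_*/2$ is superfluous --- any $y_0\in B_{\delta_*}$ in the good set satisfies $|y_0|^{-\beta}\ge\delta_*^{-\beta}$, which is all the lower bound uses --- so neither $2^{-d}<\tfrac12$ nor that extraction is what pins down the exponent $\frac{d-1}{\beta}$; the sharp threshold comes from the scale $\delta_*\sim|x|^{q/(d-q\beta)}$ combined with the $L^1$ integrability threshold $|x|^{-d}$. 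This redundancy only changes constants and does not affect the validity of your proof.
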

\begin{proof}
	Fix $\alpha>0$. Plugging $y=|x|^{\alpha}x$ in \eqref{z}, dividing by $|x|(1-|x|^{\alpha})$ and integrating in $B_{1/2}(0)$, we get
	\begin{equation}
	\int_{B_{1/2}(0)} \frac{1-|x|^{\beta \alpha}}{1-|x|^{\alpha}}\frac{1}{|x|^{\beta(\alpha+1)+1}} \, d x
	\le \int_{B_{1/2}(0)}g(x) \, d x+\int_{B_{1/2}(0)}h(x|x|^{\alpha})\, d x.
	\end{equation}
	By changing variables in the last integral, according to $y=x|x|^{\alpha}$, we end up with
	\begin{equation}\label{z1}
	\int_{B_{1/2}(0)}\frac{1}{|x|^{\beta(\alpha+1)+1}} \, d x\le
	C\int_{B_{1/2}(0)}g(x) \, d x
	+C\int_{B_{1}(0)}h(y)\frac{1}{|y|^{d\frac{\alpha}{\alpha+1}}} \, d y.
	\end{equation}
	By the H\"older inequality the last integral in \eqref{z1} is finite for any $\alpha<q-1$;
	therefore, \eqref{z1} implies $\beta(\alpha+1)<d-1$ for any  $\alpha <q-1$. This easily gives $q\le \frac{d-1}{\beta}$.
\end{proof}

Let us now fix $1\le r<d$. For any $\beta<d/r-1$ consider the function $u(x):=|x|^{-\beta}\in W^{1,r}_{\text{loc}}(\RR^d)$ and cut it off with a smooth cut-off function so that it is compactly supported in $(-1/2,1/2)^d$. Extend then the function by periodicity and regard it as a function in $W^{1,r} (\TT)$. Proposition \ref{prop:exampleLusin} ensures that the exponent $q$ in Corollary \ref{cor:LusinLipsLinearversion}, associated to $u$, must satisfy $q\le \frac{d-1}{\beta}$ and therefore $q\le r\frac{d-1}{d-r}+\eps$ with $\eps\to 0$ when $\beta\to d/r-1$.

\subsection{The critical case $p=d$}
We discuss here possible improvements of \eqref{eq:LusinLipparzialmenteAsimmetrica} in the critical case $p=d$. First of all observe that, in general, we cannot expect \eqref{e:tot_asimmetrica} to hold for $u\in W^{1,d}(\TT)$ since it would in turn imply $u\in L^{\infty}$. However, following closely the proof of Proposition \ref{prop:LusinLipschitzasimmetrica} one can show:
\begin{equation}\label{eq:criticalcase1}
|u(x)-u(y)|\le C(d)|x-y|\left( 1+ M |D u|(x)\left(1+\log\left(  M |D u|^d (y)  \right)^{\frac{d-1}{d}} \right)\right)
\quad \text{for any $x,y\in\RR^d\setminus N$},
\end{equation}
where $N\subset \RR^d$ is negligible. We do not give the details since \eqref{eq:criticalcase1} does not play any role in the sequel.
We instead show a generalization of \eqref{eq:LusinLipparzialmenteAsimmetrica}
for maps with $D u$ in the Lorentz space $L^{d,1}$ (see e.g. \cite{Grafakos} for the relevant definition): as a corollary we get the $\Leb d$-a.e. uniqueness of trajectories of vector fields enjoying such regularity. We recall, in passing, that the assumptions $Du\in L^{d,1}$ implies the continuity of $u$. 

\begin{propos}\label{prop:Lorentz}
	Assume $u\in W^{1,1} (\TT)$ satisfy $D u\in L^{d,1}$. Then there exists $g\in L^{d,\infty}$ such that
	\begin{equation}\label{eq:Ld1}
	\| g\|_{L^{d,\infty}}\le C(d) \|D u\|_{L^{d,1}},
	\end{equation}
	\begin{equation*}
	|u(x)-u(y)|\le (C(d) M |D u| (x) + g (x)) |x-y|
	\quad \text{for any }x,y\in \RR^d\setminus N
	\end{equation*}
for some negligible set $N$. The latter can be assumed to be empty if $u$ is appropriately defined pointwise and moreover there is a continuous selection map $Du \ni L^{d,1} \mapsto g\in L^{d,\infty}$. 
\end{propos}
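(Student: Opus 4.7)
My plan is to adapt the dyadic telescoping argument of Proposition~\ref{prop:LusinLipschitzasimmetrica}, replacing the classical H\"older inequality on the ``asymmetric'' side by a H\"older inequality in Lorentz spaces, so that no maximal-type quantity evaluated at $y$ appears on the right-hand side. Starting from the classical pointwise representation \eqref{eq:well-known} with $R:=|x-y|$, the first integral is controlled by $C(d)R\,M|Du|(x)$ via the dyadic telescoping \eqref{eq:telescopingargument}. For the second integral one uses that the kernel $z\mapsto |y-z|^{-(d-1)}$ lies in $L^{d',\infty}(\RR^d)$ with universal norm ($d'=d/(d-1)$), so H\"older in Lorentz spaces yields
\begin{equation*}
\int_{B_R(y)}\frac{|Du(z)|}{|y-z|^{d-1}}\,dz\le C(d)\,\||Du|\chi_{B_R(y)}\|_{L^{d,1}}\le C(d)\,\||Du|\chi_{B_{2R}(x)}\|_{L^{d,1}},
\end{equation*}
using $B_R(y)\subseteq B_{2R}(x)$.

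This suggests the definition
\begin{equation*}
g(x):=C(d)\,\sup_{R>0}\frac{1}{R}\,\||Du|\chi_{B_R(x)}\|_{L^{d,1}},
\end{equation*}
which, with an appropriate universal constant, gives $\int_{B_R(y)}|Du|/|y-z|^{d-1}\,dz\le R\,g(x)$ and hence the asymmetric Lusin--Lipschitz estimate $|u(x)-u(y)|\le(C(d)M|Du|(x)+g(x))|x-y|$. The substantive step is to verify that $\|g\|_{L^{d,\infty}}\le C(d)\|Du\|_{L^{d,1}}$. The decisive pointwise ingredient is
\begin{equation*}
|B_R(x)\cap E|^{1/d}\le c_d R\,(M\chi_E(x))^{1/d},
\end{equation*}
which is immediate from the definition of the Hardy--Littlewood maximal function. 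Inserting this into the layer-cake identity $\||Du|\chi_{B_R(x)}\|_{L^{d,1}}=\int_0^\infty|B_R(x)\cap\{|Du|>t\}|^{1/d}\,dt$ and dividing by $R$ yields the $R$-independent majorant
\begin{equation*}
g(x)\le C(d)\int_0^\infty (M\chi_{\{|Du|>t\}}(x))^{1/d}\,dt=:G(x),
\end{equation*}
and the required $L^{d,\infty}$ estimate follows from the Minkowski integral inequality in $L^{d,\infty}$ (valid for $d>1$ since the quasinorm is equivalent to a genuine norm) combined with the weak $(1,1)$ bound $\|M\chi_E\|_{L^{1,\infty}}\le C_d|E|$ and the identity $\|F^{1/d}\|_{L^{d,\infty}}=\|F\|_{L^{1,\infty}}^{1/d}$:
\begin{equation*}
\|G\|_{L^{d,\infty}}\le C(d)\int_0^\infty\|M\chi_{\{|Du|>t\}}\|_{L^{1,\infty}}^{1/d}\,dt\le C(d)\int_0^\infty|\{|Du|>t\}|^{1/d}\,dt=C(d)\|Du\|_{L^{d,1}}.
\end{equation*}

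The pointwise statement with $N=\emptyset$ follows from the fact that $Du\in L^{d,1}$ forces $u$ to admit a continuous representative (sharp Sobolev embedding in Lorentz scale), on which the inequality holds at every pair $(x,y)$ with the convention that the right-hand side is $+\infty$ wherever $M|Du|(x)$ or $g(x)$ is. The continuity of the selection $Du\mapsto g$ from $L^{d,1}$ to $L^{d,\infty}$ follows since $f\mapsto g[|f|]$ is sublinear and positively homogeneous, so the quantitative boundedness established above yields the pointwise bound $|g[Du_1](x)-g[Du_2](x)|\le g[|Du_1-Du_2|](x)$ and hence $\|g[Du_1]-g[Du_2]\|_{L^{d,\infty}}\le C(d)\|Du_1-Du_2\|_{L^{d,1}}$. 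I expect the main technical obstacle to lie in the careful invocation of Minkowski's integral inequality in $L^{d,\infty}$, whose natural quasinorm is not subadditive; one must pass to the equivalent normable representation (e.g.\ the averaging $L^{(d,\infty)}$-norm) to retain a controlled constant.
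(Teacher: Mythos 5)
Your proposal is correct and follows essentially the same route as the paper: the same representation formula \eqref{eq:well-known} with telescoping at $x$, the same Lorentz--H\"older estimate with the kernel $|y-\cdot|^{1-d}\in L^{\frac{d}{d-1},\infty}$, the same definition of $g$ as a supremum over radii of $R^{-1}\||Du|\mathbf{1}_{B_R(x)}\|_{L^{d,1}}$, and the same layer-cake majorant $\int_0^\infty (M\mathbf{1}_{\{|Du|>\lambda\}})^{1/d}\,d\lambda$. Your final step via Minkowski's inequality in the normable representation of $L^{d,\infty}$ is just a repackaging of the paper's duality argument against $h\in L^{\frac{d}{d-1},1}$, so the two proofs coincide in substance.
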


\begin{proof}
	Fix $x,y\in \RR^d$ and $R=2|x-y|$ and argue as in the proof of Proposition \ref{prop:LusinLipschitzasimmetrica}. Our conclusion will follow from \eqref{eq:well-known} and \eqref{eq:telescopingargument} provided we show
	\begin{equation}
	\int_{B_R(y)} \frac{|D u(z)|}{|y-z|^{d-1}} \, d z
	\le R\, g(x)
	\end{equation}
	for some $g\in L^{d,1}(\RR^d)$ satisfying \eqref{eq:Ld1}.
	
	The H\"older inequality for Lorentz spaces gives
	\begin{equation*}
	\int_{B_R(y)} \frac{Du (z)}{|y-z|^{d-1}} \, d z
	\le C(d) \||D u|{\mathbf{1}}_{B_R(y)}\|_{L^{d,1}}\||x-\cdot|^{1-d}\|_{L^{\frac{d}{d-1},\infty}}
	=C(d) \||D u|{\mathbf{1}}_{B_R(y)}\|_{L^{d,1}}.
	\end{equation*} 
	Observe that 
	\begin{equation*}
	\||D u|{\mathbf{1}}_{B_R(y)}\|_{L^{d,1}}\le \||D u|{\mathbf{1}}_{B_{3R}(x)}\|_{L^{d,1}}
	\le 3R \sup_{0<t<3}t^{-1}\||D u|{\mathbf{1}}_{B_t(x)}\|_{L^{d,1}},
	\end{equation*}
	let us set $g(x):=\sup_{0<t<3}t^{-1}\||D u|{\mathbf{1}}_{B_t(x)}\|_{L^{d,1}}$ and check \eqref{eq:Ld1}.
	First notice that
	\begin{align*}
	g(x)=&\sup_{0<t<3}t^{-1}\||D u|{\mathbf{1}}_{B_t(x)}\|_{L^{d,1}}
	=\sup_{0<t<3}\int_0^{\infty} \left(\frac{1}{t^d}\Leb d (\left\lbrace|D u|>\lambda\right\rbrace\cap B_t(x)) \right)^{1/d} \, d \lambda\\
	\le & C(d)\int_0^{\infty} \left[ M({\mathbf{1}}_{|D u|>\lambda})(x) \right]^{1/d} \, d \lambda,
	\end{align*}
    where in the latter estimates we regard $|Du|$ as a function on the torus.

	Now we argue by duality. Fix $h\in L^{\frac{d}{d-1},1}$. Recall that $\|g^d\|_{L^{1,\infty}}=\|g\|_{L^{d,\infty}}^d$ for any nonnegative $g\in L^{d,\infty}$. Hence, using the weak $(1,1)$ estimate for the
	maximal function, we get
	\begin{align*}
	\int g(x) h(x) \, d x \le & C(d) \int_0^{\infty}\int \left[ M({\mathbf{1}}_{|D u|>\lambda})(x) \right]^{1/d}h(x)\, dx \,d \lambda \\
	\le& C(d)\int_0^{\infty}\|\left[ M({\mathbf{1}}_{|D u|>\lambda}) \right]^{1/d}\|_{L^{d,\infty}}\|h\|_{L^{\frac{d}{d-1},1}} \, d \lambda\\
	=& C(d) \int_0^{\infty}\|M({\mathbf{1}}_{|D u|>\lambda})\|_{L^{1,\infty}}^{1/d} \, d \lambda \ \| h\|_{L^{\frac{d}{d-1},1}}\\
	\le &C(d)\int_0^{\infty} \Leb d(\left\lbrace|D u|>\lambda\right\rbrace)^{1/d} \, d \lambda\ \|h\|_{L^{\frac{d}{d-1},1}}\\
	=& C(d) \|D u\|_{L^{d,1}} \|h\|_{L^{\frac{d}{d-1},1}}
	\end{align*}
	Since $h\in L^{\frac{d}{d-1},1}$ is arbitrary, by duality we get the desired estimate (see e.g. \cite[Theorem 1.4.17]{Grafakos}).
\end{proof}

\section{Well posedness theorems}

First of all we observe that, arguing as in \cite[Corollary 5.4]{CC18}, Proposition \ref{prop:Lorentz} implies the following result.

\begin{corol}\label{c:Lorentz}
	Let $u\in L^1([0,T], W^{1,1} (\TT))$ satisfy $|D u|\in L^1([0,T], L^{d,1}(\TT))$ and $\div u\in L^1([0,T],  L^{\infty}(\TT))$. For $\Leb d$-a.e. $x\in \TT$ there exists a unique trajectory of $u$ starting at $x$ at time $t=0$.
\end{corol}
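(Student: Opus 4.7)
The plan is to combine the asymmetric Lusin--Lipschitz estimate of Proposition~\ref{prop:Lorentz} with the regular Lagrangian flow produced by the DiPerna--Lions theory, and then to run a Gronwall argument. For $\Leb{1}$-a.e.\ $t\in[0,T]$, Proposition~\ref{prop:Lorentz} (applied after choosing the pointwise representative that makes the exceptional set empty) supplies a nonnegative function $g(t,\cdot)\in L^{d,\infty}(\TT)$ with $\|g(t,\cdot)\|_{L^{d,\infty}}\le C\|Du(t,\cdot)\|_{L^{d,1}}$ such that
\[
|u(t,x)-u(t,y)|\le h(t,x)\,|x-y|\qquad\text{for every } x,y\in\TT,
\]
where $h(t,x):=C\,M|Du(t,\cdot)|(x)+g(t,x)$; the continuous selection stated in Proposition~\ref{prop:Lorentz} ensures joint measurability in $(t,x)$. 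Since $L^{d,1}\subset L^d$, the Hardy--Littlewood maximal theorem gives $M|Du(t,\cdot)|\in L^d(\TT)$, while $L^{d,\infty}(\TT)\hookrightarrow L^1(\TT)$, so integrating in time yields
\[
\|h\|_{L^1([0,T]\times\TT)}\le C\int_0^T \|Du(t,\cdot)\|_{L^{d,1}}\,dt<\infty.
\]

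Because $u\in L^1([0,T],W^{1,1})$ and $\div u\in L^\infty$, the DiPerna--Lions theory yields a regular Lagrangian flow $X:[0,T]\times\TT\to\TT$ of $u$ with some compression constant $L=L(X)$. By Fubini together with bounded compression,
\[
\int_{\TT}\int_0^T h(t,X(t,x))\,dt\,dx\le L\,\|h\|_{L^1([0,T]\times\TT)}<\infty,
\]
so there is a full-measure set $G\subset\TT$ of initial data along whose $X$-trajectory the function $h$ is integrable in time.

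Fix $x\in G$ and suppose $\gamma\in AC([0,T];\TT)$ is any integral curve of $u$ with $\gamma(0)=x$. Setting $\phi(t):=|\gamma(t)-X(t,x)|$ (after lifting to $\RR^d$), the asymmetric inequality applied with $X(t,x)$ as the first argument gives, for a.e.\ $t$,
\[
\phi'(t)\le |u(t,\gamma(t))-u(t,X(t,x))|\le h(t,X(t,x))\,\phi(t).
\]
Since $\phi(0)=0$ and $\int_0^T h(t,X(t,x))\,dt<\infty$, Gronwall's lemma forces $\phi\equiv 0$, i.e.\ $\gamma\equiv X(\cdot,x)$, proving uniqueness for every $x\in G$.

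The key obstacle bypassed here---and the reason the classical symmetric Lusin--Lipschitz inequality $|u(x)-u(y)|\le (g(x)+g(y))|x-y|$ would not suffice---is that the Gronwall coefficient is evaluated only along the \emph{good} trajectory $X(\cdot,x)$ rather than along the a priori unknown curve $\gamma$. A symmetric bound would demand also $\int_0^T h(t,\gamma(t))\,dt<\infty$, which cannot be secured for arbitrary $\gamma$, since such curves need not admit any bounded-compression estimate. The asymmetry provided by Proposition~\ref{prop:Lorentz} is exactly what allows the argument to close.
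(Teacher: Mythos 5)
Your proposal is correct and follows essentially the same route as the paper: apply Proposition \ref{prop:Lorentz} slice-wise in time (with the continuous selection to handle measurability), use bounded compression of the regular Lagrangian flow to show the coefficient $h(t,x)=CM|Du|(t,x)+g(t,x)$ is integrable along $X(\cdot,x)$ for a.e.\ $x$, and close with Gronwall applied to $|\gamma(t)-X(t,x)|$, exploiting that the asymmetric estimate is evaluated only along the good trajectory. Your concluding remark on why the symmetric Lusin--Lipschitz bound would not suffice matches the paper's rationale as well.
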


\begin{proof}
Let $X$ be the regular Lagrangian flow associated to $u$, which exists by the DiPerna-Lions theory. We wish to show that for a.e. $x$ the curve $t\to X(t,x)$ is in fact the unique trajectory of the ODE. Consider the function $h(t,x):= M|Du| (t,x) + g (t,x)$, where $g (t, \cdot)$ is the map given by Proposition \ref{prop:Lorentz} when applied to $u(t, \cdot)$ (we choose $u\mapsto g$ continuously in order to avoid measurability issues). Observe that, by the usual change of coordinates formula,
\[
\int_\TT \int_0^T h (t, X(t,x)) \, dt\, dx \leq C \int_0^T \int_\TT h (t,x)\, dx\, dt < \infty\, . 
\]
In particular for a.e. $x$ we have that that $t\mapsto \gamma (t) := X (t,x)$ is an absolutely continuous trajectory solving $\dot{\gamma} (t) = u (t, \gamma (t))$ and that $ a(t):= h(t,\gamma (t)) = M |Du| (t, \gamma (t))+g(t, \gamma (t))\in L^1((0,T))$. 
Fix such an $x$ and assume $\bar\gamma$ is another absolutely continuous trajectory solving $\dot{\bar\gamma} (t) = u (t, \bar\gamma (t))$ and $\bar \gamma (0)=x$. It then follows that $f (t):= |\gamma (t) - \bar{\gamma} (t)|$ is absolutely continuous and that
\[
f' (t) \leq |u (t, \gamma (t)) - u(t, \bar\gamma (t))|\leq C {( M |Du| (t, \gamma (t))+g(t, \gamma (t)))}
 f(t) = C a(t) f(t)\, .
\]
Since $f (0) = 0$ and $a\in L^1$, it follows from Gronwall's Lemma that $f\equiv 0$ on $[0,T]$. 
\end{proof}


\begin{proof}[Proof of Theorem \ref{thm:wellposedness}]
By Ambrosio's superposition principle (see \cite[Theorem 3.2]{Luigi-CIME}) there exists a family of probability measures $\left\lbrace \eta_x\right\rbrace_{x\in \RR^d}\subset \Pr(AC([0,T],\TT))$ concentrated on integral curves of $u$, starting from $x\in\TT$ at time $t=0$, such that
	\begin{equation}\label{eq:superposition}
	\int_{\RR^d} \phi(x) \rho(t,x) d x=\int_{\RR^d} \left( \int \phi(\gamma(t))\, d\eta_x(\gamma)\right) \rho_0(x) \, dx
	\quad\text{for any $\phi\in C_c(\RR^d)$}.
	\end{equation}
	Let us also recall that under our assumptions on $u$ there exists a unique regular Lagrangian flow $X$ associated to it (see \cite{DL}). 
	The sought conclusion follows from the following claim: for $\rho_0\Leb d$-a.e. $x\in\RR^d$, $\eta_x$ is concentrated on the curve $t\to X(t,x)$. 
	
	We prove the claim just in the case $1<r\le d$. The case $r>d$ follows from the fact that we have classical uniqueness of the trajectories for a.e. initial data, as observed by \cite[Corollary 5.2]{CC18} (we can of course use Corollary \ref{c:Lorentz} as well, since $L^p\subset L^{d,1}$ for every $p>d$). For any $t\in [0,T]$ we consider a representative of $u(t,\cdot )\in W^{1,r}(\TT, \RR^d)$ such that Corollary \ref{cor:LusinLipsLinearversion} holds with $N=\emptyset$ for some $a_t\in L^1_{\text{loc}}$ and $b_t\in L^{p'}$ satisfying \eqref{eq:z} (note that \eqref{eq:wellposednessrange} guarantees $p'\in (r, r\frac{d-1}{d-r})$). Note that by the last statement of Corollary \ref{cor:LusinLipsLinearversion} we can ignore any measurability issue in the variable $t$. 
	
	For any $\gamma\in AC([0,T],\RR^d)$ integral curve of $u$, and any $x\in \RR^d$ one has
	\begin{equation*}
	\frac{d}{d t} |X(t,x)-\gamma(t)|\le |u(t,X(t,x))-u(t,\gamma(t))|\le |X(t,x)-\gamma(t)|(a_t(X(t,x))+b_t(\gamma(t)))
	\end{equation*}
	for a.e. $t\in [0,T]$. Therefore the Gronwall lemma guarantees $X(\cdot,x)=\gamma$, provided
	\begin{equation}
	\gamma(0)=x
	\quad \text{and}\quad 
	\int_0^T \left(a_t(X(t,x))+b_t(\gamma(t))\right) \, dt<\infty.
	\end{equation}
	Therefore our claim follows from
	\begin{equation}\label{z2}
	\int_0^T a_t(X(t,x)) \, dt<\infty
	\quad \text{and}\quad 
	\int_0^t b_t(\gamma(t)) \, dt<\infty
	\quad \text{for $\eta_x$-a.e. $\gamma$}
	\end{equation}
	for $\rho_0\Leb d$-a.e. $x\in \TT$.
	
	The first one is a consequence of
	\begin{equation*}
	\int_\TT \left( \int_0^T a_t(X(t,x)) d t\right) \, dx
	\le C \int_0^T \int_{\TT} a_t(x) \, dx \, dt
	\le C \int_0^T \| D u_t\|_{L^r} \, dt<\infty
	\end{equation*}
	where the constant $C>0$ depends on the compressibility constant in Definition \ref{def:regflow}. Here we have used \eqref{eq:z}.
	The second inequality in \eqref{z2} follows from
	\begin{align*}
	\int_{\TT}\left( \int \int_0^T b_t(\gamma(t)) \, dt d\eta_x(\gamma)\right) \rho_0(x) \, dx
	=&\int_0^T \int_{\TT} b_t(x)\rho(t,x) \, dx \, dt
	\le \left( \int_0^T \|b_t\|_{L^{p'}}\, dt\right) \|\rho\|_{L^{\infty}(L^p)} \\
	\le & C \| D u\|_{L^1(L^r)}\|\rho\|_{L^{\infty}(L^p)}<\infty
	\end{align*}
	where we have used \eqref{eq:superposition} and \eqref{eq:z}.
\end{proof}

\textbf{ Acknowledgements}. EB wishes to thank Elio Marconi and Paolo Bonicatto for many helpful discussions.
MC has been supported by
the SNSF Grant 182565 and by the NSF under Grant No. DMS-1638352. MC acknowledges gratefully the hospitality of the Institute for Advanced Studies, where part of this work was done. 
CDL has been supported by the NSF under Grant No. DMS-1946175.

\bibliographystyle{plain}
\bibliography{NSE}
\end{document}